\def\ds{\displaystyle}
\def\G{\mathcal{G}}
\def\aut{\mathrm{aut}}
\newcommand{\ex}{\mathbf{E}}
\newcommand{\var}{\mathbf{Var}}
\newtheorem{theorem}{Theorem}
\newtheorem{lemma}[theorem]{Lemma}
\title{Further results on random cubic planar graphs}
\author{
    Marc Noy \thanks{Universitat Polit\`ecnica de Catalunya and Barcelona Graduate School of Mathematics, Department of Mathematics, Edifici Omega, 08034 Barcelona, Spain. E-mail: {\tt marc.noy@upc.edu}. Supported by the Spanish Ministerio de Econom\'{i}a y Competitividad projects MTM2014-54745-P, MTM2017-82166-P  and MDM-2014-0445.}
\and
    Cl\'ement Requil\'e \thanks{Institute for Algebra, Johannes Kepler Universit\"at Linz, Austria. E-mail: {\tt clement.requile@jku.at}. Supported by the Austrian Science Fund (FWF) grant F5004.
    The work presented in this paper was carried out while the author was affiliated with the Institut f\"ur Mathematik und Informatik, Freie Universit\"at Berlin, and Berlin Mathematical School, Germany, and partially supported by the Marie Curie Career Integration Grant FP7 PEOPLE - 2013-CIG 630749 - Countgraph.}
\and
Juanjo Ru\'e \thanks{Universitat Polit\`ecnica de Catalunya and Barcelona Graduate School of Mathematics, Department of Mathematics, Edifici Omega, 08034 Barcelona, Spain. E-mail: {\tt juan.jose.rue@upc.edu}. Supported by the Spanish Ministerio de Econom\'{i}a y Competitividad project MTM2014-54745-P, MTM2017-82166-P and the Marie Curie Career Integration Grant FP7 PEOPLE - 2013-CIG 630749 - Countgraph.}
}
\date{}
\begin{document}

\maketitle

\begin{abstract}
We provide precise asymptotic estimates for the number of several classes of  labeled cubic planar graphs, and we analyze properties of such random graphs under the uniform distribution.
This model was first analyzed by Bodirsky et al. (Random Structures Algorithms 2007).  We revisit their work and obtain new results on the enumeration of cubic planar graphs  and on random cubic planar graphs.
In particular, we determine the exact probability of a random cubic planar graph being connected, and
we show that the distribution of the number of triangles in random cubic planar graphs is asymptotically normal with linear expectation and variance. To the best of our knowledge,  this is the first time one is able to determine  the asymptotic distribution for the number of copies of a fixed graph containing a cycle in classes of random planar graphs arising from planar maps.
\end{abstract}

%\newpage

% % % % % % % % % % % % % % % % % % % % % %% % % % % % % % % % % % % % % % % % % % % %
\section{Introduction and summary of results}
% % % % % % % % % % % % % % % % % % % % % %% % % % % % % % % % % % % % % % % % % % % %

The enumeration of labeled planar graphs  has been recently  the subject of much research; see \cite{ICM, handbook} for  surveys on the area.
The problem of counting planar graphs was first solved by Gim\'enez and Noy \cite{gn2009}, while cubic planar graphs where enumerated by Bodirsky, Kang, L\"offler and McDiarmid \cite{cubic}. More recently, the present authors solved the problem of enumerating 4-regular planar graphs \cite{4-regular}. Several open problems remain, like the enumeration of bipartite or triangle-free planar graphs.

The goal of this paper is to sharpen the results from \cite{cubic}, as well as to prove new results.
We first enumerate asymptotically several classes of labeled cubic planar graphs. Among our new results are the enumeration
of cubic planar multigraphs and of triangle-free cubic planar graphs. In order to achieve this goal we need to use the so-called Dissymmetry Theorem for counting unrooted graphs whose structure can be encoded by means of a decomposition tree.

Random cubic planar graphs are analyzed according to the uniform distribution. More precisely, let $\mathcal{G}$ be the class of labeled cubic planar graphs and let $g_n$ be the number of graphs in $\mathcal{G}$ with $n$ vertices. Then each graph in $\mathcal{G}$ with $n$ vertices is taken with the same probability $1/g_n$.
We obtain the exact probability that a random cubic planar graph is connected, and we  prove several results on the distribution of the number of copies of a fixed subgraph.
In particular, we show that the distribution of the number of triangles is asymptotically normal with linear expectation and variance. To the best of our knowledge,  this is the first time one is able to determine  the asymptotic distribution of the number of copies of a fixed graph $H$ containing a cycle in classes of random planar graphs arising from planar maps. We also obtain Gaussian limit laws for the number of copies of certain almost cubic subgraphs.

The proofs are based on combinatorial decompositions, generating functions and asymptotic analysis of their coefficients, using the tools of analytic combinatorics \cite{FS}. In several places we use  \textsc{Maple} to perform symbolic and numerical computations.
%Next we provide a summary of our results. The first group of results is on asymptotic enumeration and the second one is on limit laws for parameters of random graphs.

% % % % % % % % % % % % % % % % % % % % % %
\subsection{Results on enumeration}
% % % % % % % % % % % % % % % % % % % % % %

In the first place we obtain an asymptotic estimate for the number $c_n$ of connected cubic planar graphs.
In all the statements that follow, $n$ should be even since a cubic graph has necessarily an even number of vertices. To avoid repetition, we assume this is always the case when referring to the number of vertices in cubic graphs.
All the numerical constants in this paper are given with a precision of 6 decimals places.

\begin{theorem}\label{thm:count-connected}
The number $c_n$ of connected cubic planar graphs with $n$ vertices is asymptotically
$$
	c_n \sim c \cdot n^{-7/2} \gamma^n n!,
$$
where $c \approx 0.060973$ and $ \gamma =\rho^{-1} \approx 3.132591$, where $\rho \approx 0.319225$ is the smallest positive root of the equation
\begin{equation}\label{eq:rho}
	729x^{12}+17496x^{10}+148716x^8+513216x^6-7293760x^4+279936x^2+46656=0.
\end{equation}
\end{theorem}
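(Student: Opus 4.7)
The plan is to follow the three-level decomposition of cubic planar graphs by connectivity, in the spirit of Bodirsky-Kang-L\"offler-McDiarmid~\cite{cubic}. Let $M(x)$, $B(x)$ and $C(x)$ denote the EGFs of $3$-connected, $2$-connected and connected labeled cubic planar graphs, respectively. By Whitney's unique embedding theorem, $M$ can be recovered from the known counts of $3$-connected cubic planar maps. From $M$ one builds the bivariate EGF of edge-rooted $2$-connected cubic planar \emph{networks} via the standard series/parallel/polyhedral decomposition, and integrating in the edge variable returns the univariate EGF $B(x)$. Finally, $C$ is obtained from $B$ by the block-cut-tree decomposition specialised to cubic graphs: at any cut-vertex the three incident half-edges lie in three distinct blocks, producing an implicit equation that defines $C$ as a composition involving $B'$.

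The next step is the analytic one. The parametric system defining $B$ is rational in several variables, and its dominant singularity $\rho$ is a branch point determined by the vanishing of the Jacobian of the defining algebraic system, yielding a singular expansion of Airy-universality type
\begin{equation*}
    B(x) = B_0 + B_1(\rho-x) + B_2(\rho-x)^2 + \beta\,(\rho-x)^{5/2} + O\!\bigl((\rho-x)^3\bigr),
\end{equation*}
as already observed for general $2$-connected planar graphs in~\cite{gn2009}. Eliminating the auxiliary variables from the critical system by resultant computations in \textsc{Maple} produces a single algebraic equation in $\rho$; after clearing denominators and discarding spurious factors this equation is precisely (\ref{eq:rho}), whose smallest positive root is $\rho \approx 0.319225$, giving $\gamma = \rho^{-1} \approx 3.132591$.

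The final step transfers the singular behaviour of $B$ to $C$. Because the block-tree composition is \emph{critical} at $x=\rho$ (the argument of $B'$ in the implicit equation hits the radius of convergence of $B'$ exactly at $x=\rho$), the $5/2$-exponent is inherited by $C$, so that
\begin{equation*}
    C(x) = \widetilde C_0 + \widetilde C_1(\rho-x) + \widetilde C_2(\rho-x)^2 + \widetilde\beta\,(\rho-x)^{5/2} + O\!\bigl((\rho-x)^3\bigr).
\end{equation*}
Applying the standard transfer theorem of~\cite{FS} yields $c_n \sim \bigl(\widetilde\beta/\Gamma(-5/2)\bigr)\,\rho^{-n}\,n^{-7/2}\,n!$, and the constant $c \approx 0.060973$ is obtained by plugging the numerical values of the singular coefficients into this closed form.

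The main obstacle is the criticality verification: one has to show that the block-tree composition really does reach the singular value of $B'$ at $x=\rho$, and not before, so that the $5/2$ exponent is transferred rather than overridden by a different singular mechanism (which would give both a different $\gamma$ and a possibly different polynomial exponent in $n$). In the cubic planar case, as in the general planar case~\cite{gn2009}, criticality does hold, and checking it requires comparing the defining equation for the singularity of $B$ with the condition arising from the block-tree composition; the polynomial~(\ref{eq:rho}) is then produced as a byproduct of the same \textsc{Maple} elimination, the coefficient-level book-keeping needed for the constant $c$ being the other tedious but mechanical ingredient.
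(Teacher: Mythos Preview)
Your outline contains a genuine error that makes the argument fail as written: you identify the dominant singularity of the $2$-connected generating function $B$ with the constant $\rho$ of the statement, and claim that eliminating the auxiliary variables from the system for $B$ yields the degree-$12$ polynomial~(\ref{eq:rho}). This is not the case. The $2$-connected EGF has a \emph{different}, strictly larger singularity $\rho_b\approx 0.319523$, and $\rho_b$ is a root of the degree-$6$ polynomial $54x^{6}+324x^{4}-4265x^{2}+432$, not of~(\ref{eq:rho}). Consequently your ``criticality'' claim --- that the block-tree composition reaches the singular value of $B'$ exactly at $x=\rho$ --- cannot hold in the form you state it, since if it did one would have $\rho=\rho_b$.

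A second, related issue is the description of the block decomposition for cubic graphs: it is not true that at a cut vertex the three incident edges lie in three distinct blocks. In a cubic graph a cut vertex is incident either with three isthmuses or with exactly one isthmus and one $2$-connected block in which it has degree~$2$. So the passage from $B$ to $C$ is not the usual Gim\'enez--Noy-type functional composition $C' = \exp(B'(\,\cdot\,))$; the degree constraint forces a different bookkeeping. The paper avoids this step entirely: it works directly with \emph{connected} networks, including the loop and isthmus classes $\mathcal L,\mathcal I$ that already encode the cut-vertex structure, and derives a single implicit equation for the network series $D$ involving the triangulation series $T$. The singularity $\rho$ then arises from the condition $x^{2}(1+D)^{3}=27/256$ (the singularity of $T$) combined with that implicit equation; eliminating $D$ between these two relations is what produces~(\ref{eq:rho}). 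The relation $3C^{\bullet}=D+I-L-x^{2}D-L^{2}$ then transfers the $3/2$-expansion of $D$ to $C^{\bullet}$, giving the $n^{-7/2}$ factor after integration.
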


Next we estimate the number of all cubic planar graphs.

\begin{theorem}\label{thm:count-all}
The number $g_n$ of cubic planar graphs with $n$ vertices is asymptotically
$$
	g_n \sim g \cdot n^{-7/2} \gamma^n n!,
$$
where $\gamma$ is as in Theorem \ref{thm:count-connected} and $g \approx 0.061010$.
As a consequence, the limiting probability $p$ that a random cubic planar graph is connected is equal to
$$
	p = \frac{c}{g} \approx 0.999397.
$$
\end{theorem}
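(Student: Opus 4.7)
The plan is to derive Theorem~\ref{thm:count-all} from Theorem~\ref{thm:count-connected} by the standard exponential-formula argument of analytic combinatorics, transferring singular information from the connected class to the full class. Since a disjoint union of connected cubic planar graphs is again cubic and planar, and conversely every cubic planar graph decomposes uniquely (as a set) into its connected components, the class $\mathcal{G}$ is precisely the \textsc{Set}-construction applied to the class $\mathcal{C}$ of connected cubic planar graphs. Writing $C(x)$ and $G(x)$ for the corresponding exponential generating functions, this yields
$$
G(x) = \exp(C(x)).
$$

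The next step is to transfer the singular information. From the proof of Theorem~\ref{thm:count-connected}, $C(x)$ has a unique dominant singularity at $\rho$ with a finite value $C(\rho)$, and a singular expansion of the form
$$
C(x) = C_0 + C_2 X^2 + C_4 X^4 + C_5 X^5 + O(X^6), \qquad X := \sqrt{1 - x/\rho},
$$
in which the $C_5 X^5$ term produces, through the transfer theorem, the $n^{-7/2}$ asymptotics of $c_n$. No odd power of $X$ below $X^5$ can appear, since such a term would force a dominant exponent larger than $-7/2$, contradicting Theorem~\ref{thm:count-connected}. Because $\exp$ is entire, composition preserves the analytic-plus-singular splitting and gives
$$
G(x) = e^{C_0}\bigl(1 + C_2 X^2 + (C_4 + \tfrac{1}{2}C_2^2)\, X^4 + C_5 X^5 + O(X^6)\bigr).
$$
The coefficient of the singular term $X^5$ is now $e^{C_0}\, C_5$, so exactly the same transfer theorem applied to $G$ yields
$$
g_n \sim g \cdot n^{-7/2}\gamma^n n!, \qquad g = c \cdot e^{C(\rho)},
$$
with the identical growth constant $\gamma = 1/\rho$. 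Since $c_n/g_n$ is by definition the probability that a uniform random cubic planar graph on $n$ vertices is connected, one immediately obtains $p = c/g = e^{-C(\rho)}$.

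The only nontrivial remaining step is the numerical value of $C(\rho)$. One would plug the (algebraic) expression for $C(x)$ obtained during the proof of Theorem~\ref{thm:count-connected}, built from the block/network/$3$-connected decomposition and the polynomial~(\ref{eq:rho}), into a \textsc{Maple} session, substitute the numerical root $\rho \approx 0.319225$, and evaluate $C(\rho)$ to sufficient precision to recover $g \approx 0.061010$ and $p \approx 0.999397$. The expected main obstacle is therefore not conceptual but computational: carrying enough decimal digits of $\rho$ through the nested algebraic substitutions defining $C$ so that the six-digit constants in the statement can be trusted.
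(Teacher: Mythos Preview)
Your high-level plan is right and matches the second half of the paper's argument: $G(x)=\exp(C(x))$, so the singular expansion of $G$ at $\rho$ is obtained from that of $C$, giving $G_5=e^{C_0}C_5$ and hence $g=c\,e^{C(\rho)}$, $p=e^{-C(\rho)}$. The gap is in how you obtain the numerical value $C_0=C(\rho)$.

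You write that one should ``plug the (algebraic) expression for $C(x)$ obtained during the proof of Theorem~\ref{thm:count-connected}''. But that proof never produces an expression for $C(x)$; it produces one for $C^{\bullet}(x)=xC'(x)$, namely $3C^{\bullet}=D+I-L-x^2D-L^2$, and extracts asymptotics from the $X^3$-term of its singular expansion. Integrating this singular expansion term by term does give the correct form $C_0+C_2X^2+C_4X^4+C_5X^5+\cdots$ for $C(x)$ (in particular it explains why $C_3=0$), but it cannot recover the constant of integration $C_0=C(\rho)$. The paper makes exactly this point: integrating the algebraic function $D(x)$ (equivalently $C'(x)$) in closed form fails because the defining curve has genus $20$, and the singular expansion of $xC'(x)$ alone does not determine $C_0$. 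Since both $g$ and $p$ depend on $C_0$ through $e^{\pm C_0}$, your proposal as stated does not reach the claimed constants.

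The paper closes this gap by an independent route: it applies the Dissymmetry Theorem to the tree-decomposition of connected cubic planar graphs, obtaining an explicit closed formula for the \emph{unrooted} generating function $C(x)$ in terms of the network series $D,S,P,H,L$ and $\overline{M}(x,1+D)$ (Equation~\eqref{eq:dissymetry_simple}). Evaluating that formula at $\rho$ yields $C_0\approx 0.000604$, from which $g\approx 0.061010$ and $p=e^{-C_0}\approx 0.999397$ follow. So the missing ingredient in your argument is precisely an expression for $C(x)$ itself (not $C'(x)$), and supplying one is the substantive content of the paper's proof.
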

We remark that the actual value of $p$ was not computed in \cite{cubic}, only estimated from values of $c_n$ and $g_n$ for small $n$. As we will see later,  $p$ can be computed exactly  using the Dissymmetry Theorem.
Once we have the value of $p$, a standard proof (see \cite{3-conn})   shows that the number of connected components in a random cubic graph is asymptotically distributed as $X+1$, where $X$ is a Poisson law of parameter $\lambda \approx 0.000604$.

It is also possible to estimate the number of 2-connected cubic planar graphs.

\begin{theorem}\label{thm:count-2connected}
Let $b_n$ be the number of 2-connected cubic planar graphs on $n$ vertices.
Then
$$
	b_n \sim b \cdot n^{-7/2} \gamma_b^n n!,
$$
where $b  \approx 0.059244$, $ \gamma_b =\rho_b^{-1} \approx 3.129666$, where $\rho_b \approx 0.319523$ is the smallest positive solution of
$$
54x^6+324x^4-4265x^2+432=0.
$$
\end{theorem}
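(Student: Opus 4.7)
The plan is to use the decomposition of 2-connected cubic planar graphs into their 3-connected components, following the framework developed in \cite{cubic}. A 2-connected cubic planar graph with a distinguished (directed) root edge can be viewed as a \emph{cubic planar network}: a planar graph with two distinguished poles of degree 2 whose other vertices have degree 3, subject to 2-connectivity. Such networks admit a unique decomposition into series, parallel, and polyhedral (3-connected) types, leading to a system of polynomial equations satisfied by the EGF $D(x)$ of networks, and involving also the EGF $M(x)$ of 3-connected cubic planar graphs. The latter is known in closed form via the enumeration of simple planar triangulations (dual to 3-connected cubic planar maps) and was already made explicit in \cite{cubic}.

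After eliminating auxiliary generating functions, $D(x)$ is a root of a single algebraic equation $P(x,D)=0$. Its dominant singularity $\rho_b$ is a branch point of this equation, located at a zero of the $D$-discriminant $\mathrm{Res}_D(P,\partial_D P)$. Computing this resultant symbolically in \textsc{Maple} and removing spurious factors yields the sextic $54x^6+324x^4-4265x^2+432$. A numerical check along the real branch of $D$ identifies its smallest positive root, $\rho_b \approx 0.319523$, as the relevant dominant singularity, so that $\gamma_b = 1/\rho_b \approx 3.129666$.

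At $\rho_b$, $D$ admits a square-root Puiseux expansion
\[
D(x) = D_0 + D_1\bigl(1-x/\rho_b\bigr) + D_2\bigl(1-x/\rho_b\bigr)^{3/2} + O\!\bigl((1-x/\rho_b)^2\bigr),
\]
with coefficients computable from the algebraic equation. Since a cubic graph on $n$ vertices has $3n/2$ edges, $B(x)$ is obtained from $D(x)$ via an edge-rooting identity that amounts, combinatorially, to an integration; this lifts the Puiseux exponent $3/2$ to $5/2$ in the singular expansion of $B$. Applying the standard transfer theorem of \cite{FS} then yields
\[
b_n \sim b\cdot n^{-7/2}\gamma_b^n n!,\qquad b = -\frac{B_2}{\Gamma(-5/2)}\approx 0.059244,
\]
where $B_2$ is extracted numerically from the singular expansion of $B$.

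The technical bottleneck is the correct assembly of the network decomposition, tracking the multiplicities and orientations introduced by the different block types without over- or under-counting, and then identifying $\rho_b$ as the combinatorially relevant branch point among the roots of the discriminant. Both steps are handled by symbolic computation, in the same spirit as the analogous arguments for connected and 2-connected planar graphs in \cite{gn2009}, for cubic planar graphs in \cite{cubic}, and for 4-regular planar graphs in \cite{4-regular}; once $P(x,D)$ is in hand, the remainder of the singular analysis is routine.
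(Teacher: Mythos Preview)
Your framework is the same as the paper's: restrict the network decomposition to the 2-connected case (drop the loop and isthmus classes), reduce to a single equation for $D(x)$, locate the dominant singularity, extract the Puiseux expansion, and transfer. However, there are two places where your sketch does not match the paper's argument and where the paper regards the existing literature as incomplete.

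\textbf{Locating the singularity.} You assert that $\rho_b$ is ``a branch point of $P(x,D)=0$, located at a zero of the $D$-discriminant'', and that a ``numerical check along the real branch'' identifies it. This is exactly the argument from \cite{cubic} that the present paper criticizes as incomplete (see the discussion preceding Lemma~\ref{lem:technical}): one must show that no smaller positive root of the discriminant is relevant, that $\pm\rho_b$ are the only singularities on the circle $|x|=\rho_b$, and that the singular exponent is $3/2$ rather than $1/2$. The paper's point is that the singularity does \emph{not} arise as a branch point of the polynomial relation in the usual sense; rather it is inherited from the singularity of $T(z)$ at $\tau=27/256$ through the composition $T\bigl(x^2(1+D)^3\bigr)$. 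Concretely, $\rho_b$ is obtained by solving simultaneously
\[
D+\tfrac{x^2}{2}(1+D)-\tfrac12 T\bigl(x^2(1+D)^3\bigr)=0,\qquad x^2(1+D)^3=\tau,
\]
and the paper's Lemma~\ref{lem:technical} supplies the checkable hypotheses (in particular $F_D(\rho_b,D_0)\ne0$, $D'(\rho_b)$ finite, $D''(\rho_b)=+\infty$) that certify $\rho_b$ as the unique dominant singularity with exponent $3/2$. Your proposal does not provide this justification.

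\textbf{From $D$ to $b_n$.} You say $B$ is obtained from $D$ ``via an edge-rooting identity that amounts to an integration''. The paper does not integrate. It first passes from networks to simple edge-rooted graphs by subtracting the networks whose root edge is a double edge, $\overrightarrow{B}(x)=D(x)-x^2D(x)$, a correction your sketch omits. Then $B^\bullet(x)=\overrightarrow{B}(x)/3$ by double counting the three edges at the root vertex, and one reads off $n\,b_n=n![x^n]B^\bullet(x)$ directly from the $X^3$ term of the singular expansion; the extra factor $n^{-1}$ produces the $n^{-7/2}$ without any integration.
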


Our next result is an estimate on the number of cubic planar \emph{multigraphs}. This class of graphs is instrumental in the study of the phase transition of the Erd\H os-R\'enyi random graph
\cite{giant,kl2012,critical}. In these references cubic multigraphs are equipped with a weight that depends on the number of loops and multiple edges. Here we count unweighted cubic multigraphs, which is a result interesting by itself.

\begin{theorem}\label{thm:count-multi}
The number $h_n$ of  cubic planar multigraphs is asymptotically
$$
	h_n \sim h \cdot n^{-7/2} \gamma_m^n n!,
$$
with $h\approx 0.224743$ and $\gamma_m =\rho_m^{-1} \approx 3.985537$, where $\rho_m \approx 0.250907$ is the smallest positive root of the equation
\begin{equation}\label{eq:rho-multi}
	729\ x^{12} - 17496\ x^{10} + 148716\ x^8 - 513216\ x^6 - 7293760\ x^4 - 279936\ x^2 + 46656 = 0.
\end{equation}
The same  estimate  holds for the number  of  connected cubic planar multigraphs, but with $h$ replaced by the constant $h' \approx 0.209410$.  The limiting probability of connectivity is
$$
	p_m = \frac{h'}{h} \approx 0.931778. 
$$
\end{theorem}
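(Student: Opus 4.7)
My plan follows the same scheme as Theorems \ref{thm:count-connected}--\ref{thm:count-2connected}, moving through the hierarchy 3-connected $\to$ 2-connected $\to$ connected $\to$ all, but enriched to allow double edges, triple edges, and loops where appropriate. At the 3-connected level no new work is required: a 3-connected cubic multigraph is automatically simple, since a double edge (resp.\ a loop) would produce a vertex-cut of size at most two (resp.\ one). Hence the series of 3-connected cubic planar multigraphs coincides with the one used in \cite{cubic}, obtainable from Tutte's enumeration of 3-connected cubic planar maps via Whitney's theorem.

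The main new content sits at the 2-connected level. I would set up the analogue of Tutte's network equation for \emph{cubic multigraph} networks: a network is a single edge, a series composition, a parallel composition, or the edge-substitution of a 3-connected cubic planar graph. Compared with the simple case treated in \cite{cubic}, a pair (resp.\ triple) of parallel atomic edges is now a valid parallel component, producing a genuine double edge (resp.\ the theta graph as a base case); these extra atoms change the coefficients of the polynomial system satisfied by the bivariate series $B(x,y)$ of 2-connected cubic planar multigraphs rooted at an edge, where $x$ marks vertices and $y$ marks edges. Symbolic elimination then yields an algebraic equation for the univariate series $B(x,1)$ whose smallest positive singularity is a square-root branch point at $x=\rho_m$, the polynomial \eqref{eq:rho-multi} being read off from the resultant of the critical system. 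The sign differences with respect to \eqref{eq:rho} are exactly accounted for by the extra double- and triple-edge contributions.

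To pass from 2-connected to connected, I would invoke the Dissymmetry Theorem applied to the block tree of a connected cubic multigraph; loops now appear as an extra block type, since a vertex carrying a loop has precisely one outgoing bridge edge. This produces the connected series $H^{\circ}(x)$ as a rational combination of $B(x,1)$, its derivatives, and the new loop term, inheriting the same branch-point singularity with exponent $3/2$. The SET construction $H(x)=\exp\bigl(H^{\circ}(x)\bigr)$ gives the class of all cubic planar multigraphs while preserving the singular exponent, and the transfer theorems of \cite{FS} then deliver the universal $n^{-7/2}\gamma_m^n n!$ asymptotics with growth rate $\gamma_m=\rho_m^{-1}$. Numerical evaluation of the leading singular coefficients in \textsc{Maple} yields $h\approx 0.224743$ and $h'\approx 0.209410$, and the limiting probability of connectivity is $p_m=h'/h$.

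The main obstacle is the careful combinatorial bookkeeping in the multigraph network equation and in the block-tree Dissymmetry step, so that each double edge, triple edge and loop is counted exactly once and no series/parallel composition is attributed to two canonical decompositions. Once this setup is correct, the rest of the argument proceeds routinely via the smooth implicit-function schema of \cite{FS}, in complete parallel with the proofs of Theorems \ref{thm:count-connected} and \ref{thm:count-2connected}.
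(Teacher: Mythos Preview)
Your overall architecture --- network decomposition, Dissymmetry Theorem to unroot, then exponentiation --- is the same as the paper's. Two points diverge, one structural and one that is a genuine gap.

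\textbf{Structure.} The paper does not pass through 2-connected multigraphs. It sets up the network system \emph{directly at the connected level}, with the five types $\mathcal{L},\mathcal{I},\mathcal{S},\mathcal{P},\mathcal{H}$; the only changes relative to the simple system \eqref{eq:networks} are an additive $x^2$ in the equation for $P$ (the 3-bond) and an additive $x^2(1+L)$ in the equation for $L$ (the two-vertex multigraph with two loops). Elimination yields a single equation for $D$ of the same shape as \eqref{eq:D-connected}, and the singularity condition $x^2(1+D)^3=\tau$ together with that equation produces \eqref{eq:rho-multi}; the sign flip relative to \eqref{eq:rho} is exactly the relation $p_2(y)=p_1(-y)$ explained in the paper's remark. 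The Dissymmetry Theorem is then applied not to the block tree but to the finer tree whose nodes are the series/parallel/3-connected/loop pieces ($\mathcal{R},\mathcal{M},\mathcal{T},\mathcal{L}$-nodes), exactly as in Section~\ref{subsec:simple-general}, with the $\mathcal{M}$- and $\mathcal{L}$-node terms adjusted for the 3-bond and for cut vertices carrying a loop or adjacent to a double edge. Your block-tree route could presumably be made to work, but in a cubic graph the attachment combinatorics at cut vertices is constrained (a cut vertex meets one or three isthmuses, possibly plus a loop), and the paper's $\mathcal{L}$-node formalism encodes this directly rather than via a separate 2-connected enumeration.

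\textbf{The gap.} Your phrase ``inheriting the same branch-point singularity with exponent $3/2$'' hides the crux. Each ingredient in the Dissymmetry identity (vertex-, edge-, and directed-edge-rooted) has leading singular term of order $X^3$ with $X=\sqrt{1-x/\rho_m}$; to get $n^{-7/2}$ for the \emph{unrooted} connected series $C(x)$ the $X^3$ contributions must cancel, i.e.\ $C_3=0$ and the leading singular term is $X^5$. In the simple case this is automatic because one can integrate the expansion of $C^\bullet(x)=xC'(x)$. For multigraphs the paper explicitly points out that there is no algebraic relation expressing $C^\bullet$ in terms of the network series, so this shortcut is unavailable. The paper therefore proves $C_3=0$ by a separate counting argument: for $n\ge 4$ every vertex of a connected cubic planar multigraph is incident with at least one simple edge, hence there are at least $n/2$ simple edges, hence at least $n$ rooted networks per unrooted multigraph, contradicting $C_3\neq 0$ (which would force the two counts to be within a constant factor). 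Without some argument of this type your transfer step yields $n^{-5/2}$, not $n^{-7/2}$, and neither the constants $h,h'$ nor the probability $p_m$ follow.
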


\noindent We remark that the proof needs again an application of the  Dissymmetry Theorem, since the presence of loops and multiple edges does not allow us, as for simple graphs, to directly relate the number of graphs rooted at a vertex with those rooted at an edge. In addition, the similarity between equations \eqref{eq:rho} and \eqref{eq:rho-multi} will be explained later. 

We recall that a sequence  $(a_n)$ is $P$-\emph{recursive} if it satisfies a linear recurrence relation whose coefficients are polynomials in $n$.

\begin{theorem}\label{th:Dfinite}
The following sequences are $P$-recursive: the numbers of arbitrary, connected and 2-connected cubic planar graphs, and the number of cubic planar multigraphs.
\end{theorem}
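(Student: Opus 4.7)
The plan is to prove that each of the four exponential generating functions corresponding to the sequences in the statement is D-finite (holonomic) over $\mathbb{C}(x)$, since the coefficient sequence of any D-finite EGF is automatically P-recursive. I will freely use the standard closure properties of the class of D-finite power series: every algebraic power series is D-finite, D-finite series form a $\mathbb{C}(x)$-algebra stable under derivation and integration, and any solution of a linear ODE whose coefficients lie in a finite algebraic extension of $\mathbb{C}(x)$ is D-finite over $\mathbb{C}(x)$ (because the $\mathbb{C}(x)$-span of its successive derivatives is contained in the finite-dimensional module generated over $\mathbb{C}(x)$ by the algebraic coefficients and the function itself).

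The key step is to pin down the algebraic nature of the intermediate generating functions constructed in the proofs of Theorems \ref{thm:count-connected}--\ref{thm:count-multi}. The generating function of 3-connected cubic planar graphs coincides, via Whitney's theorem, with that of 3-connected cubic planar maps, which is a classical algebraic series. Networks of 2-connected cubic planar graphs are encoded by a polynomial system in this algebraic series, and combining this system with the Dissymmetry Theorem, which expresses the unrooted generating function as a rational combination of its rooted variants, yields the algebraicity of the EGF $B(x)$ of 2-connected cubic planar graphs. This already implies P-recursiveness of $b_n$.

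For the connected case one decomposes along the block-tree. The crucial observation is that a vertex of degree $3$ is shared by at most three blocks, with only three possible local patterns of edge distribution among them, namely $(3)$, $(2,1)$, and $(1,1,1)$. Consequently, the usual exponential in the block-tree functional equation, which sums over unordered multisets of blocks at a cut vertex, collapses to a \emph{finite polynomial} in appropriately rooted versions of $B$, all of which are algebraic. The implicit function theorem then yields an algebraic expression for the vertex-rooted series $C^{\bullet}(x)$, and $C(x)$, being an antiderivative of $C^{\bullet}(x)/x$, is D-finite. Finally, from the exponential formula $G(x) = \exp(C(x))$ one deduces the first-order linear ODE $G'(x) = (C^{\bullet}(x)/x)\, G(x)$ whose coefficient is algebraic, so the closure property above gives D-finiteness of $G(x)$. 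The multigraph case is completely analogous: the 3-connected cubic planar multigraph generating function is algebraic, and the same bounded-degree collapse of the block-tree exponential applies.

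The main obstacle is the explicit bookkeeping of the block-tree collapse for cubic planar (multi)graphs: one must enumerate the finitely many local patterns at cut vertices, at bridges, and, in the multigraph case, at loops and multiple edges, and assemble the corresponding variously rooted 2-connected series through the Dissymmetry Theorem into a genuinely polynomial equation for $C^{\bullet}$. Once this is carried out, algebraicity propagates routinely, and the passage from D-finiteness of the generating functions to P-recursiveness of the coefficient sequences is a standard consequence of the closure properties recalled in the first paragraph.
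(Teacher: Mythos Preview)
Your overall strategy is correct and close to the paper's: algebraicity of the rooted series plus closure under antiderivatives handles $B$ and $C$, and the first–order linear ODE $G'=C'\,G$ with algebraic coefficient handles $G=e^{C}$. One inaccuracy worth flagging: the Dissymmetry Theorem does \emph{not} give algebraicity of the unrooted series $B(x)$ (or $C(x)$). The expression it produces contains the unrooted $3$-connected term $\overline{M}(x,1+D)$, which carries a $\log(1-U)$ (see \eqref{eq:MM}), as well as the cyclic contribution $\tfrac12\log\frac{1}{1-(D-S)}$ from $\mathcal{R}$-nodes. What \emph{is} algebraic is the derivative $B'(x)$ (respectively $C'(x)$), since the derivative of the logarithm of an algebraic function is algebraic; the paper uses precisely this observation rather than algebraicity of $B$ itself.

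For multigraphs, ``completely analogous'' hides a real issue that the paper singles out: the direct double-counting identity $3C^{\bullet}=D+I-L-x^{2}D-L^{2}$ breaks down because of loops and multiple edges, so there is no immediate algebraic equation for $C^{\bullet}$ from the network system. One has to go through the Dissymmetry formula \eqref{eq:C-multi} for $C(x)$. From there, your route (differentiate \eqref{eq:C-multi} to obtain an algebraic $C'(x)$, then use $G'=C'G$) does work and is arguably slicker than the paper's route, which instead exponentiates \eqref{eq:C-multi} directly and observes that the two logarithms cancel against the exponential, leaving $G(x)=J(x)\cdot\exp(\overline{M}(x,1+D))$ as a product of D-finite functions. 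Both arguments ultimately rest on the same Dissymmetry bookkeeping you allude to in your last paragraph.
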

The proofs rely on the algebraic character of several of the generating functions involved and, in the case of cubic multigraphs, on a further  application of the Dissymmetry Theorem.

Our last result in this section is the enumeration of triangle-free cubic planar graphs. The proof is more involved and will be given after the proof of Theorem \ref{thm:triangles}, since it uses the techniques introduced there for studying the distribution of the number of  triangles in random cubic planar graphs.

\begin{theorem}\label{thm:count-triangle-free}
The number $u_n$  of connected triangle-free  cubic planar graphs with $n$ vertices is asymptotically
$$
	f_n \sim f \cdot n^{-7/2} \gamma_t^n n!,
$$
with $f \approx 0.000911$ and $\gamma_t = \rho_t^{-1} \approx 2.641747$,  where $\rho_t \approx 0.378537$ is the smallest positive solution of the equation
\begin{equation}\label{eq:sing-triangle-free}
\def\arraystretch{1.5}
\begin{array}{ll}
    {x}^{40} - 2{x}^{38} - 41{x}^{36} + 180{x}^{34} + 285{x}^{32} -3630{x}^{30} - {\frac {26651}{4}}{x}^{28} + {\frac {5654783}{32}}{x}^{26} \\
    - {\frac {3989098451}{4096}}{x}^{24} + {\frac {50409552353}{16384}}{x}^{22} - {\frac {246713078305261}{37748736}}{x}^{20} + {\frac {8988271236666325}{905969664}}{x}^{18} \\
    - {\frac {34616066062430108809}{3131031158784}}{x}^{16} +{\frac {148714112813428613}{16307453952}}{x}^{14} - {\frac {88102457851295}{15925248}}{x}^{12} \\
    + {\frac {28819599609215}{11943936}}{x}^{10} - {\frac {2805808889}{3888}}{x}^{8} + {\frac {130387637}{972}}{x}^{6} - {\frac {8646784}{729}}{x}^{4} - 128{x}^{2} + 64 = 0.
\end{array}
\end{equation}
In addition, the number $t_n$ of triangle-free cubic planar graphs with $n$ vertices is asymptotically
$$
	t_n \sim \alpha\cdot n^{-7/2} \gamma_t^n n!,
$$
where $\alpha \approx 0.0009109$.
\end{theorem}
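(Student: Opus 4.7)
The plan is to obtain $u_n$ and $t_n$ by specialising the triangle-marking bivariate generating function developed for Theorem~\ref{thm:triangles}. Let $C(x,u)$ denote the exponential generating function of connected cubic planar graphs in which $x$ marks vertices and $u$ marks triangles, and let $G(x,u) = \exp(C(x,u))$ be its arbitrary counterpart via the standard set construction. Then the series counting connected (resp.\ arbitrary) triangle-free cubic planar graphs are $U(x) := C(x,0)$ and $T(x) := G(x,0) = \exp(U(x))$, so the proof reduces to a singularity analysis of $U(x)$ followed by a routine exponential-formula step to pass to $T(x)$.

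The first step is to trace $u=0$ through the combinatorial decomposition used to produce $C(x,u)$: cubic planar graphs decompose via blocks and then, on each $2$-connected block, via a network decomposition into series, parallel and $3$-connected cores, the latter being $3$-connected cubic planar maps by Whitney's theorem. Setting $u=0$ produces a finite system of polynomial equations in the same variables (core, network, block, connected series) but with triangle-bearing configurations killed; taking the resultant with respect to the auxiliary unknowns yields an algebraic equation for $U(x)$ whose discriminant locus is governed by the polynomial \eqref{eq:sing-triangle-free}. The dominant singularity $\rho_t$ is identified as the smallest positive root of this polynomial, and checking that $\rho_t$ is a genuine branch-point singularity of $U(x)$ (of the standard square-root type arising from critical compositions in the network-to-block-to-connected tower) gives, by the transfer theorems of analytic combinatorics \cite{FS}, a singular expansion of the form $U(x) = U_0 + U_2 X^2 + U_3 X^3 + \cdots$ with $X = \sqrt{1-x/\rho_t}$, from which
\[
    [x^n] U(x) \;\sim\; f \cdot n^{-5/2}\, \rho_t^{-n},
\]
and hence $u_n \sim f \cdot n^{-7/2}\gamma_t^n n!$ after incorporating the $n!$ from the exponential generating function.

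The passage to $t_n$ is immediate: since $T(x) = \exp(U(x))$ and the exponential is analytic at $\rho_t$, the singular type of $T$ at $\rho_t$ is inherited from $U$, with leading singular coefficient multiplied by $\exp(U(\rho_t))$. This yields $t_n \sim \alpha \cdot n^{-7/2}\gamma_t^n n!$ with $\alpha = f \cdot e^{U(\rho_t)}$, and the numerical value $\alpha \approx 0.0009109$ being only slightly larger than $f$ reflects that $U(\rho_t)$ is very small, i.e.\ almost all triangle-free cubic planar graphs are connected (as for the non-restricted class in Theorem~\ref{thm:count-all}).

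The main obstacle is the bookkeeping of triangles through the network decomposition. A triangle in the final graph need not correspond to a triangular face of a $3$-connected core, because an edge of a core may be virtual and get substituted by a nontrivial network; conversely, substitution at virtual edges can destroy triangles that were present in the core. Correctly setting $u=0$ therefore requires classifying, within each series/parallel/polyhedral component, exactly which configurations of real and virtual edges produce a genuine triangle of the assembled graph, and this case analysis is what makes the eliminated equation~\eqref{eq:sing-triangle-free} as complicated as it is. Once the bivariate system for $C(x,u)$ from the proof of Theorem~\ref{thm:triangles} is available, however, the specialisation $u=0$ and the subsequent singularity extraction are mechanical, and the numerical constants $f$, $\alpha$ and $\rho_t$ can be obtained with \textsc{Maple}.
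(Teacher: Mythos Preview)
Your overall strategy is right in spirit but there are two genuine gaps.

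First, $U(x)=C(x,0)$ is \emph{not} algebraic, so you cannot eliminate to a single polynomial equation for it. In the paper the unrooted connected generating function $C(x)$ is obtained via the Dissymmetry Theorem (Equation~\eqref{eq:dissymetry_simple}) and involves $\overline{M}(x,1+D)$, which contains a logarithm (Equation~\eqref{eq:MM}); the bivariate $C(x,u)$ is not even written down anywhere. What \emph{is} algebraic is the vertex-rooted series $F^\bullet(x)=xF'(x)$, expressed through the triangle-free network generating functions $D_0,I,L,E$ by $3F^\bullet(x)=D_0+I-L-L^2-x^2(E-L)$. The paper's proof (Section~\ref{subsec:triangle-free}) therefore works at the level of rooted networks: Lemma~\ref{lem:networks-free} gives the system obtained from Lemmas~\ref{lem:networks-triangl}--\ref{lem:eq-H0H1-triangl} not by simply putting $u=0$, but by dividing the equation for each $D_i,S_i,P_i,W_i,H_i$ by $u^i$ first and \emph{then} setting $u=0$, so that triangles incident only with the root edge survive. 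A bare substitution $u=0$ would kill those networks and break the decomposition.

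Second, your singular-expansion bookkeeping is inconsistent. An expansion $U(x)=U_0+U_2X^2+U_3X^3+\cdots$ with $U_3\ne0$ gives $[x^n]U(x)\sim c\,n^{-5/2}\rho_t^{-n}$ and hence $u_n\sim c\,n^{-5/2}\gamma_t^n n!$; multiplying by $n!$ does not shift the exponent. The $n^{-7/2}$ in the statement comes from the fact that the paper analyses $F^\bullet(x)$ (which does have a nonzero $X^3$ term) and then uses $nf_n=n![x^n]F^\bullet(x)$ to gain the extra factor $n^{-1}$. Equivalently, the unrooted $F(x)$ has first non-analytic term $X^5$, not $X^3$.

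Finally, your formula $\alpha=f\cdot e^{U(\rho_t)}$ is correct in principle, but evaluating $U(\rho_t)=F(\rho_t)$ would require an expression for the unrooted $F(x)$ (again via a triangle-aware Dissymmetry Theorem). The paper explicitly does \emph{not} do this: it states that $\alpha$ is the only constant for which no exact expression is obtained, and estimates it from the ratios $f_n/t_n$ for small $n$.
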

The multiplicative constant $\alpha$  in the last theorem is the only constant in our work for which we do not obtain an exact expression. It would be in principle possible to obtain this expression, but the computations would be very complex. The approximate value given in the statement is estimated from small values of $n$.

At the end of the paper we provide a table with the numbers of cubic planar graphs for small values of $n$ for the new families we have enumerated: multigraphs and triangle-free graphs. The numbers for arbitrary, connected and 2-connected cubic planar graphs are listed in \cite{cubic}.

% % % % % % % % % % % % % % % % % % % % % %
\subsection{Results on limit laws}
% % % % % % % % % % % % % % % % % % % % % %

Given an unlabeled graph $H$, a \emph{copy} of $H$ in a labeled graph $G$ is a subgraph isomorphic to $H$.
Our  results in this section  deal with the number of copies of a fixed subgraph.
We start with the number of triangles, the main result in this section. We say that a sequence $X_n$ of random variables is asymptotically normal if the standardized  variables $(X_n - \ex[X_n])/\sigma(X_n)$ converge in distribution to the standard normal law.

\begin{theorem}\label{thm:triangles}
Let $X_n$ be the number of triangles in a random
cubic planar graph. Then $X_n$ is asymptotically normal with moments
$$
	\mathbf{E} [X_n] \sim \mu n , \qquad \mathbf{Var} [X_n] \sim \lambda n,
$$
where
$$
	\mu \approx 0.121974, \qquad \lambda \approx 0.064985.
$$
\end{theorem}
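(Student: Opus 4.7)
The strategy is to introduce a bivariate exponential generating function $G(x,u)=\sum_{n,k} g_{n,k}\, x^n u^k/n!$, where $g_{n,k}$ is the number of labelled cubic planar graphs on $n$ vertices with exactly $k$ triangles, and to apply the Quasi-Powers framework of analytic combinatorics (see \cite[Ch.~IX]{FS}) to the coefficients $[x^n]G(x,u)$. The goal is to show that, in a neighbourhood of $u=1$, the dominant singularity $\rho(u)$ of $x\mapsto G(x,u)$ is a movable square-root branch point depending analytically on $u$, with $\rho'(1)\neq 0$ and no competing dominant singularity. Hwang's Quasi-Powers theorem then yields asymptotic normality together with
$$
\mu = -\frac{\rho'(1)}{\rho(1)}, \qquad \lambda = \mu + \mu^{2} - \frac{\rho''(1)}{\rho(1)}.
$$

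The bulk of the work is to propagate the variable $u$ through the BKLM decomposition of cubic planar graphs into 3-connected, 2-connected, connected and arbitrary layers that already underlies Theorems \ref{thm:count-connected}--\ref{thm:count-2connected}. At the top of the decomposition, 3-connected cubic planar graphs correspond by Whitney's theorem to 3-connected cubic planar maps, and triangles can be tracked on such maps by a bivariate refinement of the known enumeration, either directly (marking both facial triangles and the separating 3-cycles coming from non-trivial 3-cuts) or after passing to the dual triangulation, where these two kinds of triangles translate into degree-3 vertices and essential 3-bonds respectively. The networks for 2-connected cubic planar graphs are then obtained by combining 3-connected networks with the trivial, series and parallel constructors of the grammar used in \cite{cubic}; at each composition step we must identify the triangles created at the interface (most notably, a parallel composition involving a length-two path network and an edge produces a triangle straddling two constituents) and multiply by the appropriate power of $u$. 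Passing from 2-connected to connected and then to all cubic planar graphs uses the usual block-cut decomposition and exponential set construction, neither of which can produce a triangle that does not already live inside a single 2-connected block, so no additional factors of $u$ are introduced there.

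Once $G(x,u)$ is assembled, the analytic part follows the pattern of the univariate proofs. The dominant singular behaviour of $G(\cdot,u)$ is inherited from the 3-connected layer, and a perturbation argument applied to the algebraic system defining it shows that for $u$ close to $1$ the dominant singularity remains a square-root branch point, located at an analytic function $\rho(u)$ with $\rho(1)=\rho$ as in Theorem \ref{thm:count-connected}. The transfer theorem then gives $[x^n]G(x,u)\sim A(u)\,\rho(u)^{-n}\,n^{-7/2}$ with $A$ analytic and non-vanishing near $u=1$, which is the quasi-power shape required by Hwang's theorem. The numerical values of $\mu$ and $\lambda$ are then obtained by evaluating $\rho(1)$, $\rho'(1)$ and $\rho''(1)$ from the polynomial equation satisfied by $\rho(u)$, a computation that we carry out symbolically in \textsc{Maple}.

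The main technical obstacle, and the reason this is described as the first such limit law for a cycle-containing subgraph in a planar graph class arising from maps, is precisely the bivariate bookkeeping of the second paragraph: in a class enumerated via a hierarchical decomposition, a triangle is typically \emph{not} carried cleanly by a single atom of the grammar, and correctly marking it requires a dedicated case analysis of the cubic network grammar to avoid missing or double-counting triangles formed at composition boundaries. Once that is done, verifying Hwang's variability condition $\rho'(1)\neq 0$ and the absence of other dominant singularities for $u$ close to $1$ reduces to a concrete numerical check on the explicit equations.
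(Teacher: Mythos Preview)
Your outline is the same strategy as the paper: mark triangles with $u$, push this through the cubic network grammar, show the dominant singularity is an analytic square-root branch point $\rho(u)$, and apply Quasi-Powers. Two remarks on the details.

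First, a small inaccuracy: in a 3-connected cubic planar graph every triangle is a face (a non-facial 3-cycle would force a 2-vertex cut), so there is nothing to do about ``separating 3-cycles'' at that level; dually, triangles correspond exactly to degree-3 vertices in the triangulation, and the paper enumerates these by refining Tutte's decomposition into 4-connected triangulations (Lemma \ref{lem:triangular_maps}).

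Second, your sketch names the interface problem but not the device that solves it. The crucial subtlety is not only that a triangle may be \emph{created} at a composition boundary, but that a triangle of the 3-connected core is \emph{destroyed} whenever any of its three edges is replaced by a nontrivial network. The paper handles both directions simultaneously by stratifying networks according to the number $i\in\{0,1,2\}$ of triangles through the root edge (generating functions $D_i,S_i,P_i,W_i,H_i$), introducing $E=D_0+u^{-1}D_1+u^{-2}D_2$ which counts networks while \emph{not} marking root-incident triangles, and then substituting into the 3-connected core via $t\big(x^2(1+E)^3,\,1+\tfrac{u-1}{(1+E)^3}\big)$: the inner argument $1+\tfrac{u-1}{(1+E)^3}$ records that a core triangle survives precisely when all three of its edges are left unreplaced (Lemmas \ref{lem:networks-triangl} and \ref{lem:eq-H0H1-triangl}). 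This is the step your second paragraph gestures at but does not supply, and it is where the real work lies; once it is in place, the singularity analysis (Lemma \ref{lem:eqns-sing-triangles}) and the numerical evaluation of $\rho'(1),\rho''(1)$ proceed exactly as you describe.
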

%The proof is based on the so-called quasi-powers theorem.
It was proved in~\cite{cubic} that $X_n$ is linear with high probability. Our result is a considerable sharpening of this fact.
The proof, based on the so-called Quasi-powers Theorem, is technically involved and we are not able to extend it, for instance, to the number of cycles of length 4. The key property here is that two triangles in a cubic graph are either vertex disjoint or share one edge.

Our final results concern the number of copies of graphs which are close to being cubic.
We define a \emph{cherry} as a planar graph in which all vertices have degree 3 except for one  vertex of degree 1. The smallest cherry has 6 vertices and is obtained by subdividing one edge of $K_4$ and attaching  one vertex of degree 1.
In what follows, we denote by $\aut(H)$ the number of automorphisms of a graph $H$.
We recall that the number of different ways of labeling an unlabeled graph $H$ is equal $n!/\aut(H)$.

\begin{theorem}\label{thm:cherries}
Let $X_{H,n}$ be the number of copies of a fixed unlabeled cherry $H$ with $h$ vertices in a random cubic planar graph.
 Then $X_{H,n}$ is asymptotically normal with moments
$$
	\mathbf{E} [X_{H,n}] \sim \mu n , \qquad \mathbf{Var} [X_{H,n}] \sim \lambda n,
$$
where
$$	
	\mu = \ds\frac{4374(\rho^4+8\rho+4)^2}{\rho^2P_1}\cdot \ds\frac{\rho^h}{\aut(H)},
	\qquad \lambda = \ds\frac{8748(\rho^4+8\rho+4)(P_2h+ P_3)}{\rho^4P_1^3 }\cdot \ds\frac{\rho^{2h}}{\aut(H)^2} + \mu + \mu^2,
$$
where $\rho$ is as in Theorem \ref{thm:count-connected}, and
$$
	\def\arraystretch{1.5}
	\begin{array}{rcl}
%	R_1 &= 	
%	 \ds\frac{-4374(\rho^4+8\rho+4)^2}{\rho^2P_1}, \\
%	R_2&= 	 \ds\frac{-8748(\rho^4+8\rho+4)}{\rho^4P_1^3} (P_2h+ P_3),  \\
	P_1 &=& -(2187{\rho}^{10} +43740{\rho}^{8} + 297432{\rho}^{6} + 769824{\rho}^{4} - 7293760{\rho}^{2} + 139968) > 0,	\\
	P_2 &=& -4374\left( {\rho}^{4}+8\,{\rho}^{2}+4 \right) ^{3} P_1,\\
	P_3 &=& -14348907{\rho}^{22} - 593088156{\rho}^{20} - 10235553660{\rho}^{18} - 95276742480{\rho}^{16} - 464803389936{\rho}^{14} \\
		&& - 412656456960{\rho}^{12} + 7449015918528{\rho}^{10} + 32947458310656{\rho}^{8} - 457978474586624{\rho}^{6} \\
		&& + 18919725382656{\rho}^{4} + 3101861081088{\rho}^{2} - 19591041024.
	\end{array}
$$
Moreover, for $h\ge 2$ we have that $\lambda > 0$.
\end{theorem}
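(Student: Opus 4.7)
The plan is to introduce a bivariate generating function $\mathcal{G}(x,u)$ counting labeled cubic planar graphs with $x$ marking vertices and $u$ marking copies of the fixed cherry $H$, and to show that it falls under the hypotheses of the Quasi-Powers theorem (see \cite{FS}) uniformly for $u$ in a complex neighborhood of $1$. The key structural observation is that if $H$ is a subgraph of a cubic graph $G$, then every vertex of $H$ other than the degree-$1$ vertex $v_0$ already uses all three of its incidences inside $V(H)$; consequently the edge $v_0v_1$ is necessarily a bridge of $G$, and the $v_1$-component of $G-v_0v_1$, together with $v_0$ and the bridge itself, reconstructs $H$. Hence cherries of shape $H$ sit at bridges, and marking them amounts to a local modification of the connected/$2$-connected/$3$-connected decomposition of cubic planar graphs developed in \cite{cubic} and already employed in the proofs of Theorems~\ref{thm:count-connected} and~\ref{thm:count-2connected}.

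Concretely, at every potential bridge-attachment point in the decomposition we insert an extra term of weight $u\cdot x^h/\aut(H)$ accounting for the insertion of a cherry of shape $H$. The resulting system is algebraic in $x$ and an $O(x^h)$ analytic perturbation in $u$ of the univariate system. A standard application of the Implicit Function Theorem then shows that the dominant singularity $\rho(u)$ of $\mathcal{G}(\cdot,u)$ is analytic at $u=1$ with $\rho(1)=\rho$, and the singular expansion remains of the same $5/2$-type as in the univariate case, uniformly for $u$ close to $1$. The Quasi-Powers theorem then delivers the asymptotic normality of $X_{H,n}$, with
\[
\mu \;=\; -\frac{\rho'(1)}{\rho(1)}, \qquad \lambda \;=\; \mu + \mu^2 - \frac{\rho''(1)}{\rho(1)}.
\]

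To match the explicit formulas stated, we differentiate implicitly the algebraic equation defining $\rho(u)$ once and twice at $u=1$, isolate $\rho'(1)$ and $\rho''(1)$, and simplify with \textsc{Maple}. The factor $\rho^h/\aut(H)$ in $\mu$ comes directly from the local weight of a single cherry attachment, while the polynomial $P_1$ (positive at $\rho$, as one checks from its numerical coefficients) encodes the first derivative of the singular equation at $u=1$. The second derivative produces an expression linear in $h$, reflecting the $x^h$-weight of one cherry: its coefficient of $h$ and its $h$-free part become $P_2$ and $P_3$ after simplification, with the identity $P_2 = -4374(\rho^4+8\rho^2+4)^3 P_1$ emerging naturally. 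The positivity assertion $\lambda > 0$ for $h\ge 2$ then reduces to a direct sign analysis of the closed-form expression, using $P_1>0$ to fix the sign of the $h$-dependent contribution together with the non-negative correction $\mu+\mu^2$.

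The main obstacle is the correct bookkeeping of the decomposition system from \cite{cubic} once the cherry marker is introduced: the series involved satisfy a high-degree polynomial system, and cherry attachments must be counted exactly once across the connected, $2$-connected and $3$-connected levels, avoiding any double counting of pendant structures that are close to but not identical to $H$. Once the system is correctly assembled, the extraction of $\rho'(1)$ and $\rho''(1)$ and the identification of $P_1, P_2, P_3$ is a lengthy but essentially mechanical symbolic computation.
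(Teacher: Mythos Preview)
Your approach is essentially the same as the paper's: mark copies of $H$ in the network decomposition, observe that the singularity moves analytically with $u$, and apply the Quasi-Powers Theorem. Your structural observation that the pendant edge of a cherry is necessarily a bridge of $G$ is exactly the point.

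Where you are vaguer than the paper is in the execution. The paper does \emph{not} need to touch ``every potential bridge-attachment point'' or worry about double counting across the connected/2-connected/3-connected levels; because cherries correspond precisely to loop networks, the whole modification is a single added term in the equation for $L$ in Lemma~\ref{lem:networks}:
\[
L \;=\; \frac{x^2}{2}(D+I-L)\;+\;\frac{x^h}{\aut(H)}\,(u-1).
\]
(Note the factor $(u-1)$, not $u$: the cherry is already present in the univariate count, so one adds a marking correction rather than a fresh term.) Everything else in the system is left untouched, one re-derives a single equation for $D(x,u)$ analogous to \eqref{eq:D-connected}, and the singularity curve comes from pairing it with $x^2(1+D)^3=\tau$. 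The extraction of $\rho'(1)$, $\rho''(1)$ and hence $P_1,P_2,P_3$ is then exactly the routine symbolic computation you describe. For $\lambda>0$, the paper does not rely on abstract sign analysis but plugs in $\rho\approx 0.32$ and reduces to the elementary inequality $1.68-5.64\,h\,\rho^{h}>0$ for $h\ge 2$.
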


It was shown in \cite{colin2009} that, with high probability, $X_{H,n}$ is at least $cn$ for some constant $c>0$ that depends only on $H$.
Our result provides a precise limit distribution.

Define a \emph{brick} as a graph obtained from a 3-connected cubic planar graph by removing one edge, so that all vertices have degree 3 expect two vertices $u$ and $v$ that have degree 2, and such that $u$ and $v$ are distinguishable (as if the edge removed was oriented).
Our last result gives the distribution of the number of copies of a given brick.
We denote by  $K_4^-$ the graph obtained from $K_4$ by removing one edge.

\begin{theorem}\label{thm:bricks}
Let $X_{B,n}$ be the number of copies of a fixed unlabeled brick $B$, different from $K_4^-$, with $b$ vertices in a random cubic planar graph.
Then $X_{B,n}$ is asymptotically normal with moments
$$
\mathbf{E}[X_{B,n}] \sim \mu n , \qquad \mathbf{Var}[X_{B,n}] \sim \lambda n,
$$
where
$$	\mu= 	 \ds\frac{10185312\rho^2}{P_1} \ds\frac{\rho^b}{\aut(B)}, \qquad \lambda =  	 \ds\frac{242688\rho^2(P_2h+ P_3)}{P_1^3}    \ds\frac{\rho^{2b}}{\aut(B)^2} +\mu + \mu^2,
$$
 $\rho$ is as in Theorem \ref{thm:count-connected}, and
$$
\def\arraystretch{1.5}
\begin{array}{rcl}
	P_1 &=&  -(	 2187{\rho}^{10}+43740{\rho}^{8}+297432{\rho}^{6}+769824{\rho}^{4}-7293760{\rho}^{2}+139968) > 0,	\\
	P_2 &=& -854929626\rho^2P_1,\\
	P_3 &=& 880066296\,{\rho}^{20}+35202651840\,{\rho}^{18}+591404550912\,{\rho}^{16}
	+5407127322624\,{\rho}^{14}	\\
	&&+19994308272243\,{\rho}^{12}-
	51726289953708\,{\rho}^{10}-559899907432200\,{\rho}^{8}
	-	1063749220662816\,{\rho}^{6}\\
	&&-5760872476783424\,{\rho}^{4}+
	43131140739648\,{\rho}^{2}+3604751548416.
	\end{array}
$$
Moreover, for  $b\ge2$ we have that $\lambda >0$.

The same result holds for $B=K_4^-$ with constants
$$\mu \approx 0.004529, \qquad \lambda \approx 0.004343.
$$
\end{theorem}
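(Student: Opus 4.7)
The plan is to apply the Quasi-powers Theorem of \cite{FS} to a bivariate generating function $G(x,u)$ for cubic planar graphs in which $u$ marks copies of the brick $B$, following the same framework as in the proofs of Theorems~\ref{thm:triangles} and~\ref{thm:cherries}. The essential new ingredient is that a brick is essentially a 3-connected component of a cubic planar graph with one oriented edge removed, so copies of $B$ can be tracked naturally within the 3-connected decomposition of \cite{cubic}.

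The key combinatorial observation is that the network substitution scheme underlying the decomposition of 2-connected cubic planar graphs from \cite{cubic} replaces each edge of a 3-connected cubic planar graph by a 2-connected network (a 2-connected cubic planar graph with a distinguished oriented edge removed). Therefore, a copy of $B$ in a cubic planar graph $G$ corresponds bijectively to a 3-connected component of $G$ isomorphic to $B^{+} := B + \{uv\}$ (the 3-connected cubic planar graph obtained by adding the edge between the two degree-2 vertices of $B$), together with a choice of the oriented edge playing the role of $uv$. This lets me attach a weight $u$ to each 3-connected component of type $B^{+}$ in the functional equations from \cite{cubic}; the factor $1/\aut(B)$ in the final formulas reflects labeling.

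The univariate function $G(x)=G(x,1)$ has a square-root singularity at $\rho$ given by Theorem~\ref{thm:count-all}. By the implicit function theorem applied to the augmented bivariate system, for $u$ in a neighborhood of $1$ the singular type is preserved with a smooth movable singularity $\rho(u)$, and the Quasi-powers Theorem yields asymptotic normality with
\[
\mu = -\frac{\rho'(1)}{\rho(1)}, \qquad \lambda = -\frac{\rho''(1)}{\rho(1)} + \mu + \mu^{2}.
\]
Implicit differentiation of the singular system at $u=1$, performed symbolically with \textsc{Maple}, produces the explicit rational expressions in $\rho$, with $P_1$ arising (up to sign) as the derivative governing the square-root behavior, so that $P_1 > 0$ at $\rho$ is just nondegeneracy of the singularity. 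The linear dependence on $b$ in the leading term of $\lambda$ reflects the fact that each marked 3-connected component contributes $b$ vertices under simultaneous differentiation with respect to $x$ and $u$. Positivity of $\lambda$ for $b \geq 2$ follows since $\mu + \mu^{2} > 0$ and the first summand is dominated once $\rho^{b}$ is small; the finitely many small-$b$ cases are verified numerically at $\rho \approx 0.319225$.

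The main technical obstacle is the exceptional case $B = K_4^{-}$, where $B^{+}=K_4$ is the smallest 3-connected cubic planar graph and forms the base of the 3-connected generating function in \cite{cubic}. Here the generic substitution scheme degenerates because marking copies of $K_4$ is entangled with the very construction of the 3-connected generating function itself, so the functional equations must be specialized at the base case. I would handle this by evaluating the corresponding bivariate equations directly and extracting $\mu$ and $\lambda$ numerically, recovering the values $\mu \approx 0.004529$ and $\lambda \approx 0.004343$ recorded at the end of the statement.
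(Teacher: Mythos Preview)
Your overall strategy---introduce a bivariate generating function with $u$ marking copies of $B$, show that the square-root singularity moves analytically to $\rho(u)$, and apply the Quasi-powers Theorem---matches the paper exactly. The error is in your ``key combinatorial observation''.

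You assert that copies of $B$ in $G$ correspond bijectively to $3$-connected components of $G$ isomorphic to $B^{+}$ together with a choice of oriented edge. This is false: a $3$-connected component in Tutte's decomposition may have several \emph{virtual} edges, i.e.\ edges that have been replaced by nontrivial networks and are therefore not actually present in $G$. If a component isomorphic to $B^{+}$ has more than one virtual edge, then no choice of edge to delete yields an actual subgraph of $G$ isomorphic to $B$; the remaining virtual edges are simply missing from $G$. Attaching a weight $u$ to each $3$-connected component of type $B^{+}$, as you propose, would therefore count the wrong parameter (the number of such components rather than the number of subgraph copies of $B$).

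The paper's observation is more direct. Since all but two vertices of $B$ have degree $3$, a copy of $B$ inside a cubic graph is attached to the remainder through exactly two half-edges, and hence appears in the decomposition of Lemma~\ref{lem:networks} precisely as an $h$-network whose core is $B^{+}$ with \emph{no} non-root edge replaced---that is, as an $h$-network literally equal to $B$. One therefore modifies only the equation for $H$, replacing it by
\[
H \;=\; \frac{M(x,1+D)}{1+D} \;+\; \frac{x^{b}}{\aut(B)}\,(u-1),
\]
and the analytic part of your outline then goes through unchanged.

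Your explanation of why $B=K_4^{-}$ is exceptional is also off the mark. The issue is not that $K_4$ sits at the base of the $3$-connected generating function; it is that $K_4^{-}$ has a \emph{second} realisation in the network decomposition, namely as the parallel composition of two loop networks. Consequently one must also perturb the equation for $P$ by adding the term $\tfrac{x^{2}}{2}L^{2}(u-1)$, after which the same routine computation produces the stated numerical constants.
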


The case when $B = K_4^-$ has to be treated separately, since it can appear in two different ways: as a 3-connected core, or as the parallel composition of two loop networks, as explained in the next section.
Bricks other than $K_4^-$ can only appear as 3-connected cores.

We have obtained similar results for parameters that have been studied for several classes of planar  and related classes of graphs \cite{3-conn}.
We can show that the number of cut vertices, the number of isthmuses (separating edges) and the number of blocks (2-connected components, including isthmuses) are all asymptotically normal with linear expectation and variance.
For the sake of brevity we omit the proofs and give only the values of the constants for the expectation and variance:

\begin{center}
	\begin{tabular}{|l|c|c|}
		\hline
		{\rm Parameter} & $\mu$ & $\lambda$ \\
		\hline
		{\normalfont Cut vertices} & {\normalfont 0.001877}  & {\normalfont 0.003793}  \\
		{\normalfont Isthmuses} &  {\normalfont 0.000939} &  {\normalfont 0.000950} \\
		{\normalfont Blocks} & {\normalfont 0.001878}  & {\normalfont 0.003796}  \\
		\hline
	\end{tabular}
\end{center}

% % % % % % % % % % % % % % % % % % % % % %% % % % % % % % % % % % % % % % % % % % % %
% % % % % % % % % % % % % % % % % % % % % %% % % % % % % % % % % % % % % % % % % % % %
\section{Preliminaries} \label{s:prel}
% % % % % % % % % % % % % % % % % % % % % %% % % % % % % % % % % % % % % % % % % % % %
% % % % % % % % % % % % % % % % % % % % % %% % % % % % % % % % % % % % % % % % % % % %

In this section we collect a number of analytic and combinatorial results that are needed in the sequel.

\paragraph{Analytic combinatorics.}

We use the elements of analytic combinatorics as in \cite{FS}.
To a class $\G$ of labeled graphs, we associate the exponential generating function $G(x) = \sum_{n\ge0} g_n x^n/n!$, where $g_n$ is the number of graphs in $\G$ with $n$ vertices.
We define $\G^\bullet$ as the class of graphs in $\G$ with a distinguished vertex (that we call the root).
By the basic rules of the symbolic method, its generating function is $G^\bullet(x) = xG'(x)$.

Given a complex number $\zeta \ne0$, a $\Delta$-domain at $\zeta$ is an open set in the complex plane of the form
$$
	\Delta(R,\phi) = \{  z \colon |z|<R, z \ne \zeta, |\arg(z-\zeta)|> \phi \}.
$$
A dominant singularity of a complex function is a singularity of the smallest modulus.
The basic tool for extracting asymptotic estimates from generating functions is the following (see \cite[Corollary VI.1]{FS}).

\begin{lemma}[Transfer Theorem]\label{th:transfer}
	Assume that $f(z)$ has a unique dominant singularity $\rho > 0$ and is analytic in a $\Delta$-domain at $\rho$.
	If $f$ satisfies, locally around~$\rho$, the estimate
	\begin{equation*}
    	f(z) \underset{z \to \rho}{\sim} (1-z/\rho)^{-\alpha},
    \end{equation*}
	with $\alpha \not\in \{0,-1,-2,\dots \}$, then the coefficients of $f(z)$ satisfy
	\begin{equation*}
    	[z^n]f(z) \underset{n\to \infty}\sim \frac{n^{\alpha-1}}{\Gamma(\alpha)} \rho^{-n}.
	\end{equation*}
\end{lemma}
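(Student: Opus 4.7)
The plan is to prove the Transfer Theorem in the classical Flajolet--Odlyzko style, via Cauchy's coefficient formula and a Hankel-type contour around the singularity $\rho$. First I would write
$$
[z^n]f(z) = \frac{1}{2\pi i}\oint f(z)\, z^{-n-1}\, dz,
$$
and, crucially, use the hypothesis that $f$ is analytic in a $\Delta$-domain $\Delta(R,\phi)$ at $\rho$ to deform the circle $|z|=\rho$ to a contour $\gamma$ that lies entirely inside the $\Delta$-domain. This deformation is the real point where the $\Delta$-domain analyticity pays off: without it, one could not push the contour past $\rho$.

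Next I would decompose $\gamma$ into three pieces: a small circle $\gamma_1$ of radius $1/n$ around $\rho$, two rectilinear segments $\gamma_2$ going from $\rho + 1/n$ outward to some radius slightly larger than $\rho$, say $R' = \rho(1 + \log^2 n/n)$ (staying inside the $\Delta$-sector $|\arg(z-\rho)|>\phi$), and a large circular arc $\gamma_3$ of radius $R'$. The arc $\gamma_3$ contributes $O(R'^{-n})$, and the choice of $R'$ yields exponentially small savings compared to the target rate $\rho^{-n}$, so $\gamma_3$ is negligible. The segments $\gamma_2$ can be estimated directly using the local asymptotic $f(z)\sim(1-z/\rho)^{-\alpha}$ and shown to be of smaller order than the main term.

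The main contribution comes from the small loop $\gamma_1$ together with the inner parts of $\gamma_2$. Here I would apply the change of variable $z=\rho(1+t/n)$, which sends the Hankel-like contour in the $z$-plane to a contour $H$ in the $t$-plane that encircles the origin from $+\infty$ below, around zero, and back to $+\infty$ above. Under this substitution $(1-z/\rho)^{-\alpha} = (-t/n)^{-\alpha} = n^{\alpha}(-t)^{-\alpha}$ and $z^{-n-1}\sim \rho^{-n-1}e^{-t}$ for $|t|=O(\log^2 n)$, and the Jacobian produces an extra $\rho/n$. Collecting constants and invoking Hankel's classical contour representation
$$
\frac{1}{\Gamma(\alpha)} = \frac{1}{2\pi i}\int_H e^t(-t)^{-\alpha}\, dt,
$$
yields exactly $n^{\alpha-1}\rho^{-n}/\Gamma(\alpha)$, with error terms controlled by the estimates on $\gamma_2$ and $\gamma_3$. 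The hypothesis $\alpha\notin\{0,-1,-2,\dots\}$ is needed here because $1/\Gamma(\alpha)$ vanishes precisely at those values, in which case the leading term disappears and a refined expansion of $f$ would be required.

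I expect the main obstacle to be the careful contour engineering: choosing the radii, angles, and the splitting parameter (here $1/n$) so that each error piece is genuinely of lower order than $n^{\alpha-1}\rho^{-n}$, and in particular so that the deformation remains inside the $\Delta$-domain uniformly. The Hankel integral identification at the end is clean once the substitution is set up correctly, but verifying the uniform bounds on $\gamma_2$ and $\gamma_3$, and checking that truncating $H$ at $|t|=\log^2 n$ introduces only a subexponentially small error, is the technically delicate part.
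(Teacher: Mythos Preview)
Your sketch is correct and follows the standard Flajolet--Odlyzko argument. Note, however, that the paper does not actually prove this lemma: it is quoted verbatim as a known tool, with a pointer to \cite[Corollary~VI.1]{FS}, and is used as a black box throughout. So there is no ``paper's own proof'' to compare against; what you have outlined is essentially the proof one finds in the cited reference, with the usual contour deformation into a Hankel loop, the substitution $z=\rho(1+t/n)$, and Hankel's integral for $1/\Gamma(\alpha)$. One minor point: in your description of $\gamma_2$ you want the segments to leave $\rho$ along directions with $|\arg(z-\rho)|=\phi'$ for some fixed $\phi'\in(\phi,\pi/2)$ (not along the real axis through $\rho+1/n$), so that they stay inside the $\Delta$-domain and the factor $(1-z/\rho)^{-\alpha}$ is well defined along them; otherwise the argument is fine.
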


If $f$ has several dominant singularities coming from pure periodicities, then the contributions from each of them must be combined (see \cite[IV.6.1]{FS}).
In our case, the periodicities are due to the fact that cubic graphs have necessarily an even number of vertices and the corresponding generating functions are even.
We will locate the (unique) positive dominant singularity $\rho$ and will add the contributions from $\rho$ and $-\rho$.

All the singularities we will encounter are of square-root type, that is, the expansion of a function at a singularity $\rho$ is of the form
$$
	f(x) = \sum_{i\ge0} f_i X^i, \qquad X = \sqrt{1-x/\rho}.
$$
The singular expansions we encounter are of  the form
$$
	f(z) = f_0 + f_2 X^2 + \cdots + f_{2k} X^{2k} + f_{2k+1} X^{2k+1} + O(X^{2k+2}),
$$
with $k=1$ or $k=2$.
The only non-analytic term is $f_{2k+1} X^{2k+1} $, and it is from this term that asymptotic estimates are derived using the Transfer Theorem.

In order to prove asymptotic normal limit laws, we need a simplified version of the so-called Quasi-powers Theorem (see \cite[Theorem IX.8]{FS}).

\begin{lemma}[Quasi-powers Theorem] \label{th:quasi-powers}
	Let $\{X_n\}_{n\geq 1}$ be a sequence of non-negative discrete random variables with probability generating functions $p_n(u)$.
	Assume that, uniformly in a fixed complex neighborhood of $u = 1$
	$$
		p_n(u) = A(u)·B(u)^n \left(1+O\left(n^{-1}\right)\right),
	$$
	where $A(u), B(u)$ are analytic at $u = 1$ and $A(1) = B(1) = 1$.
	Assume finally that $B(u)$ satisfies the condition  $B''(1) + B'(1) - B'(1)^2 \ne 0$.

	Then the distribution of $X_n$ is, after standardization, asymptotically normal, and the mean and variance satisfy
	$$
    	\ex [X_n] \sim B'(1) n,
    	\qquad
    	\var [X_n] \sim \left({B''(1) } + {B'(1) } - {B'(1)}^2 \right) n.
	$$
\end{lemma}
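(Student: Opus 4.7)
The plan is to expand the logarithm of $p_n(u)$ using the quasi-power hypothesis, read off the moments at $u=1$, and then apply L\'evy's continuity theorem to the characteristic function of the standardized variable $Y_n:=(X_n-\ex[X_n])/\sqrt{\var[X_n]}$.

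First I would derive the moment asymptotics. Since $A(u)$ and $B(u)$ are analytic at $u=1$ with $A(1)=B(1)=1$, the functions $\alpha(u):=\log A(u)$ and $\beta(u):=\log B(u)$ are analytic on some fixed closed disk $|u-1|\le\delta$ and vanish at $u=1$. Taking logarithms in the hypothesis gives
$$
\log p_n(u) = \alpha(u) + n\,\beta(u) + O(n^{-1}),
$$
uniformly on that disk. Using the probability-generating-function identities $\ex[X_n]=p_n'(1)$ and $\var[X_n]=p_n''(1)+p_n'(1)-p_n'(1)^2$, differentiating this expansion at $u=1$ and noting that $\beta'(1)=B'(1)$ and $\beta''(1)=B''(1)-B'(1)^2$ produces
$$
\ex[X_n]=nB'(1)+O(1),\qquad \var[X_n]=n\bigl(B''(1)+B'(1)-B'(1)^2\bigr)+O(1).
$$
The non-degeneracy assumption guarantees that $\sigma^2:=B''(1)+B'(1)-B'(1)^2$ is non-zero, and since it is a limiting variance it must in fact be positive; hence $\var[X_n]\sim \sigma^2 n$.

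Next I would study the characteristic function of $Y_n$, written through the probability generating function as
$$
\phi_n(t)=\ex\bigl[e^{itY_n}\bigr]=e^{-it\,\ex[X_n]/s_n}\,p_n\!\left(e^{it/s_n}\right),\qquad s_n:=\sqrt{\var[X_n]}.
$$
For any fixed $t$ and $n$ large, the point $e^{it/s_n}$ lies inside $|u-1|\le\delta$, so the quasi-power expansion may be substituted. Taylor-expanding $\alpha$ and $\beta$ about $u=1$ to second order in $t/s_n$, one obtains
$$
\log\phi_n(t) = -\frac{it\,\ex[X_n]}{s_n} + nB'(1)\,\frac{it}{s_n} - \frac{t^2 n}{2s_n^2}\bigl(B''(1)+B'(1)-B'(1)^2\bigr) + O\!\bigl(n^{-1/2}\bigr).
$$
The linear-in-$t$ terms cancel up to $O(n^{-1/2})$ since $nB'(1)=\ex[X_n]+O(1)$, while the quadratic term tends to $-t^2/2$ by the very definition of $s_n$. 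Thus $\phi_n(t)\to e^{-t^2/2}$ pointwise, and L\'evy's continuity theorem yields that $Y_n$ converges in distribution to a standard normal.

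The main technical point is to ensure that the $O(n^{-1})$ error in the quasi-power hypothesis is genuinely uniform on a fixed complex neighborhood of $u=1$, so that it survives the substitution $u=e^{it/s_n}$, the passage to the logarithm, and multiplication by $n$. This is exactly where the stated analyticity of $A$ and $B$ is used: Cauchy estimates applied to the error term $p_n(u)/(A(u)B(u)^n)-1$ give uniform bounds on its value and derivatives throughout the disk, so that substituting $u=e^{it/s_n}$ and Taylor-expanding are legitimate for any fixed $t$ once $n$ is large enough.
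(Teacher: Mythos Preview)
Your argument is a correct sketch of the standard proof of the Quasi-powers Theorem: expand $\log p_n(u)$ using the hypothesis, read off the moments by differentiating at $u=1$ (justified via Cauchy estimates thanks to the uniformity of the $O(n^{-1})$ error on a fixed disk), and then verify pointwise convergence of the characteristic function of the standardized variable to $e^{-t^2/2}$ by a second-order Taylor expansion of $\beta(u)=\log B(u)$ at $u=e^{it/s_n}$. The only point that could be phrased more carefully is the positivity of $\sigma^2=B''(1)+B'(1)-B'(1)^2$: you assert it because it is a ``limiting variance'', but strictly speaking you should observe that $\var[X_n]\ge 0$ for all $n$ and $\var[X_n]=\sigma^2 n+O(1)$, so $\sigma^2\ge 0$; combined with the non-degeneracy assumption $\sigma^2\ne 0$ this gives $\sigma^2>0$.

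As for the comparison with the paper: the paper does \emph{not} give its own proof of this lemma. It is stated in the preliminaries as a simplified version of a classical result, with a direct reference to \cite[Theorem IX.8]{FS}, and is used thereafter as a black box. Your proposal is essentially the argument one finds in that reference (or in Hwang's original treatment), so there is nothing to contrast at the level of strategy.
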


In our applications we will have $B(u)=\rho(1)/\rho(u)$, where $\rho(u)$ will be the dominant singularity (as a function of $z$) of a bivariate generating function $f(z,u)$. The former expressions then become
$$
    \ex [X_n] \sim \left( -\frac{\rho'(1)}{\rho(1)} \right) n,
    \qquad \var [X_n] \sim \left( -\frac{\rho''(1)}{\rho(1)} - \frac{\rho'(1)}{\rho(1)} + \left( \frac{\rho'(1)}{\rho(1)} \right)^2 \right) n.
$$

\paragraph{Planar maps and triangulations.}

We recall that a planar map is a connected planar multigraph embedded in the plane up to homeomorphism.
A map is \emph{rooted} if one of its edges is distinguished and oriented.
In this way a rooted map has a root edge and a root vertex (the tail of the root edge).
We define the root face as the face to the right of the directed root edge.
A rooted map has no automorphism, in the sense that every vertex, edge and face is distinguishable.
From now on all maps are planar and rooted.
%The dual of a map $M$ is denoted $M^*$.
Since maps are not labeled, the associated generating functions are ordinary.

A map is a \emph{triangulation} if it is 3-connected and every face is a triangle (one can consider more general triangulations having loops and multiple edges but they are not needed in this paper).
The dual of a triangulation is a 3-connected cubic map, since 3-connectivity in maps is preserved under duality (a map is 3-connected if it is 3-connected as a graph and it has no multiple edges).
Let $T(z)$ be the (ordinary) generating function of 3-connected triangulations together with the map consisting of a triangle, where the variable $z$ marks the number of vertices minus two.
Then, as shown by Tutte~\cite{tutte},

\begin{equation}\label{eq:Tu}
	T(z) = U(z)\left(1- 2U(z)\right),
\end{equation}
where $U$ is an algebraic function defined by
\begin{equation}\label{eq:u}
	z = U(z)(1-U(z))^3.
\end{equation}
%Some care is needed in defining $U$ uniquely.
Equation~\eqref{eq:u} has a unique solution with positive coefficients, given by
$$
	U(z) = z + 3z^2 + 15 z^3 + 91z^4 + \cdots
$$
Then
$$
	T(z) = z +z^2 + 3z^3 + 13z^4 + \cdots
$$

As shown in \cite{tutte}, the unique singularity of $U$ (and hence of $T$) is located at $\tau = 27/256$.
In particular,
$$
	U(\tau)=1/4, \qquad T(\tau) = 1/8.
$$
The singular expansion of $U(z)$ at $\tau$ is equal to
\begin{equation}\label{eq:sing_U}
	U(z)= \frac{1}{4} - \frac{\sqrt{6}}{8}Z + \frac{1}{12}Z^2 - \frac{31\sqrt{6}}{1728}Z^3 + \frac{37}{1296}Z^4 - \frac{2093\sqrt{6}}{248832}Z^5 + O(Z^6),
\end{equation}
where $Z = \sqrt{1-z/\tau}$. From Equation \eqref{eq:Tu} we obtain the singular expansion of $T(z)$ at $\tau$
\begin{equation*} %\label{eq:sing_T3}
 	T(z) =\frac{1}{8} - \frac{3}{16}Z^2 + \frac{\sqrt{6}}{24}Z^3 - \frac{13}{192}Z^4 + \frac{35\sqrt{6}}{1728}Z^5 + O(Z^6).
\end{equation*}

We also need to consider the family of 4-connected triangulations, which  are  those not containing a separating triangle (a triangle that is not a face) and having at least 6 vertices.
The smallest 4-connected triangulation is the graph of the octahedron.
The associated  generating function $T_4(z)$, where again $z$ marks vertices minus two, is equal to (see  \cite{tutte})
\begin{equation}\label{eq:Tv}
	T_4(z) = z + V(z)(V(z) - 1)(V(z) + 1)^{-2} - z^2,
\end{equation}
where $V(z)$ is given by
\begin{equation*} %\label{eq:S}
	z = V(z)(1-V(z))^2.
\end{equation*}
The unique solution with positive coefficients is
$$
	V(z) = z + 2z^2 + 7z^3 + 30z^4 + \ldots,
$$
and
$$
	T_4(z) = z^4 + 3z^5 + 12z^6 + 52z^7 + \ldots
$$
The unique singularity of $T_4$ is at $\varsigma = 4/27$ and we have
$$
	V(\varsigma) = 1/3, \qquad T_4(\varsigma) = 7/5832.
$$
The singular expansion of $V(z)$ at $\varsigma$  is equal to
$$
V(z) =	\frac{1}{3} - \frac{2\sqrt{3}}{9}Z + \frac{2}{27}Z^2 - \frac{5\sqrt{3}}{243}Z^3 + \frac{16}{729}Z^4 - \frac{77\sqrt{3}}{8748}Z^5 + O(Z^6), \qquad Z = \sqrt{1-z/\varsigma}.
$$
As before, using  \eqref{eq:Tv} we obtain
\begin{equation*} %\label{eq:sing_T4}
 T_4(z)=   \frac{7}{5832} - \frac{245}{23328}Z^2 + \frac{\sqrt{3}}{96}Z^3 - \frac{833}{93312}Z^4 - \frac{\sqrt{3}}{864}Z^5 + O(Z^6).
\end{equation*}

\paragraph{3-connected cubic planar graphs.}

Let $M(x,y)$ be the GF of labeled 3-connected cubic planar graphs rooted at a directed edge, where $x$ marks vertices and $y$ marks edges.
There is a bijection between triangulations and planar 3-connected cubic maps given by duality.
Also, by Whitney Theorem, every 3-connected cubic planar graph admits a unique embedding in the plane up to orientation.
Using this fact we can express $M(x,y)$ in terms of the generating function $T(z)$ of rooted unlabeled triangulations, where $z$ counts the number of  vertices minus two.
The relation is
 \begin{equation}\label{eq:TM}
 	M(x,y) = \frac{1}{2}\left(T(x^2y^3) - x^2y^3\right).
 \end{equation}
 The subtracted term $x^2y^3$ corresponds to the triangulation consisting of a single triangle.
 We have
 $$
 	M(x,y) = 12\, \frac{x^4}{4!}\, y^6 + 1080\, \frac{x^6}{6!}\, y^9 + \cdots
 $$
 The first monomial corresponds to $K_4$ (a unique labeling and 12 possible roots) and the second one to the triangular prism (60 ways to label and 18 roots).

 We will also need the generating function $\overline{M}(x,y)$ of (unrooted) labeled 3-connected cubic planar graphs, which is obtained by integration.
 We have $M(x,y) = 2y \partial\overline{M}(x,y) / \partial y$, hence
 $$
 	\overline{M}(x,y) = \frac{1}{2}\int \frac{M(x,y)}{y}dy = \frac{1}{4}\int\frac{T(x^2y^3) - x^2y^3}{y} dy.
 $$
% where the $1/2y$ of the first equality arises from the label of the root-edge and its orientation, and the second equality from Equation \eqref{eq:TM}.
We change variables as  $z = x^2y^3$ and are left with the integral $\frac{1}{12} \int T(z)/z \, dz$.
We make the further change $v = U(z)$ and, using Equations \eqref{eq:Tu} and \eqref{eq:u}, we get
 \begin{align*}
 	\overline{M}(x,y)
 	& = \frac{1}{12} \left(\int\frac{T(z)}{z} dz - z\right) \\
 	& = \frac{1}{12}\left(\int\frac{(1-2v)(1-4v)}{1-v} dv - z \right)\\
 	& = -\frac{1}{12}\left( 4v^2 + 2v + 3\log(1-v) + z\right).
 \end{align*}
Hence
 \begin{equation}\label{eq:MM}
     \overline{M}(x,y) = -\frac{1}{12}\left( 4U(x^2y^3)^2 + 2U(x^2y^3) + 3\log(1-U(x^2y^3)) +x^2y^3 \right).
 \end{equation}

\paragraph{Networks.}
We follow the definitions from \cite{cubic} but deviate slightly from the notation there.
A \emph{network} is a connected cubic planar multigraph $G$ with an ordered pair of adjacent vertices $(s,t)$  such that the graph obtained by removing the edge $st$ is simple. There could be an additional edge between $s$ and $t$ which is not removed. 
We notice that $st$ can be a simple edge, a loop or a belong to a double edge.
The oriented edge $st$ is the \emph{root} of the network  and $s,t$ are the \emph{poles}.

Given a network $H$, with root edge $st$, and a directed edge $e=uv$ of another network $G$, the \emph{replacement} of $e$ with $H$ is the network obtained from $G$ by performing the following operation.
Subdivide the edge $uv$ twice producing a path $uu'v'v$, remove the edge $u'v'$, and identify $u'$ and $v'$, respectively, with vertices $s$ and $t$ of $H-st$.
Notice that if $G$ and $H$ are cubic and planar, so is the resulting network.

A cut vertex in a cubic graph is necessarily incident with one or three isthmuses.
For each cut vertex $u$ incident with exactly one isthmus $e$, we can remove the component containing $e$ and erase the resulting vertex of degree 2 resulting in a cubic graph.
We call this operation \emph{suppressing} the cut vertex $u$.

By classifying the possible situations obtained by removing the edge $st$, networks fall into five classes, as shown in \cite{cubic}.
For the sake of completeness we offer an alternative proof based on  Tutte's  decomposition of 2-connected graphs into 3-connected components \cite{dissymetry}.

\begin{lemma}
Let $G$ be a network and let $st$ be the root edge. Then $G$ belongs to one and only one of the following classes.
\begin{itemize}
    \item $\mathcal{L}$ (Loop). The root edge is a loop.
    \item $\mathcal{I}$ (Isthmus). The root edge is an isthmus.
    \item $\mathcal{S}$ (Series). $G-st$ is connected but is not 2-connected.
    \item $\mathcal{P}$ (Parallel). $G-st$ is 2-connected and $G - \{s,t\}$ is not connected.
    \item $\mathcal{H}$ (3-connected). $G$ is obtained from a 3-connected graph by possibly replacing each non-root edge with a network of types $\mathcal{L}$, $\mathcal{S}$, $\mathcal{P}$ or $\mathcal{H}$.
\end{itemize}

\end{lemma}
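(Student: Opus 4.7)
The plan is to proceed by case analysis on the structure of the root edge $st$ and the connectivity of the deletions $G-st$ and $G-\{s,t\}$, appealing to Tutte's decomposition of a 2-connected multigraph into its 3-connected components in order to identify the class $\mathcal{H}$.

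Pairwise disjointness of the five classes follows essentially from the definitions read with natural priority $\mathcal{L}\succ\mathcal{I}\succ\mathcal{S}\succ\mathcal{P}\succ\mathcal{H}$. A loop is never an isthmus, so $\mathcal{L}$ and $\mathcal{I}$ cannot overlap. If $st$ is an isthmus, then $G-st$ is disconnected, ruling out $\mathcal{S}$, $\mathcal{P}$, $\mathcal{H}$, all of which require $G-st$ to be connected (and 2-connected for the last two). The classes $\mathcal{S}$ and $\mathcal{P}$ are mutually exclusive since $G-st$ is either 2-connected or not. Finally, $\mathcal{P}$ and $\mathcal{H}$ are separated by the observation that in a 3-connected multigraph any two vertices are joined by three internally disjoint paths, so removal of $\{s,t\}$ keeps the graph connected, contradicting the definition of $\mathcal{P}$.

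For exhaustivity, assume $st$ is neither a loop nor an isthmus, so that $G-st$ is connected. If $G-st$ fails to be 2-connected we are in $\mathcal{S}$. Otherwise $G-st$, and hence $G$, is 2-connected, and I would apply Tutte's theorem: $G$ admits a unique decomposition into a tree of 3-connected skeletons, each of which is a bond, a polygon, or a simple 3-connected graph, with every edge of $G$ lying in a unique skeleton. The root $st$ cannot lie in a polygon, for that would produce a cut vertex of $G-st$, contradicting 2-connectivity. If $st$ lies in a bond, then the two poles $\{s,t\}$ separate the subnetworks attached along the other virtual edges of the bond, placing us in $\mathcal{P}$. If $st$ lies in a 3-connected skeleton, then $G$ is obtained from that skeleton by replacing each non-root virtual edge with the subnetwork corresponding to its child subtree, which recursively belongs to $\mathcal{L}\cup\mathcal{S}\cup\mathcal{P}\cup\mathcal{H}$, placing us in $\mathcal{H}$.

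The main obstacle is the careful treatment of the multigraph features. Tutte's theorem is classically stated for 2-connected simple graphs, whereas a network may carry a loop at the root or a double edge across $\{s,t\}$. The hypothesis that $G-st$ is simple confines the multigraph anomalies to the root edge itself, so the only departures from the simple case are absorbed precisely into $\mathcal{L}$ (when $s=t$) or into a bond skeleton of size two for $\mathcal{P}$ (when a second edge between $s$ and $t$ is present). I would also verify the base cases, such as the single loop and the two-vertex double edge, to ground the recursive description of $\mathcal{H}$ and to confirm that every network is captured by exactly one of the five classes.
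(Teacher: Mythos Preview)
Your approach and the paper's are essentially the same: dispose of $\mathcal{L}$ and $\mathcal{I}$ by inspection of the root edge, then appeal to Tutte's decomposition into 3-connected components to separate the remaining cases $\mathcal{S}$, $\mathcal{P}$, $\mathcal{H}$. You supply considerably more detail than the paper's three-line argument (disjointness, where the root edge lands among polygon/bond/3-connected skeletons, the multigraph caveats), all of which is sound.

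The one structural difference worth flagging is \emph{where} Tutte is applied. The paper does not work with $G$ itself: it first passes to the 2-connected core $C$ obtained by suppressing every cut vertex incident with exactly one isthmus, and classifies $C$ via Tutte. You instead invoke Tutte only once $G-st$ is already 2-connected, and place everything with $G-st$ connected but not 2-connected straight into $\mathcal{S}$. The suppression step is not cosmetic: the class $\mathcal{H}$, as defined, allows non-root edges of the 3-connected core to be replaced by $\mathcal{L}$-networks, and such a replacement produces a cut vertex (and an isthmus) away from the root, so $G-st$ fails to be 2-connected. Read literally, your case split would then place such a network in $\mathcal{S}$, while the intended classification---and the one reflected in the equations $S=D(D-S)$ and $H=M(x,1+D)/(1+D)$ with $L\subset D$---is $\mathcal{H}$. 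The paper's passage to the core is precisely what strips off these pendant $\mathcal{L}$-pieces before Tutte is invoked. This is really a wrinkle in how the lemma is phrased rather than an error in your reasoning, but it explains why the core appears in the paper's proof and why your priority ordering never explicitly confronts $\mathcal{S}$ versus $\mathcal{H}$.
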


\begin{proof}
Let $G$ be a network with root edge $st$, and suppose $st$ is neither a loop nor an isthmus, so we are not in the classes $\mathcal{L}$ or $\mathcal{I}$.
Consider the 2-connected core $C$ obtained by suppressing all cut vertices incident with exactly one isthmus.
By Tutte's decomposition into 3-connected components, $C$ belongs to either $\mathcal{S}, \mathcal{P}$ or $\mathcal{H}$.
\end{proof}
Let now $\mathcal{D}$ be the class of networks for which the graph resulting from the removal of the root edge remains connected.
It is by definition,
$$
	\mathcal{D} = \mathcal{L} + \mathcal{S} +
	\mathcal{P} + \mathcal{H},
$$
where $+$ denotes the disjoint union of classes, and the class $\mathcal{I}$ is excluded since removing the root edge of networks in this class disconnects the graph.
Let then $L(x)$, $I(x)$, $S(x)$, $P(x)$, $H(x)$, $D(x)$  be the associated generating functions.

The following result, based on simple combinatorial arguments, is shown in \cite[Section 3]{cubic}.

 \begin{lemma}\label{lem:networks}
 The following equations hold:
\begin{equation}\label{eq:networks}
\def\arraystretch{2}
\begin{array}{lll}
	D &=& L+S+P+H, \\
	L &=& \ds\frac{x^2}{2} (I+D-L), \\
	S &=& D(D-S), \\
	I &=& \ds\frac{L^2}{x^2}, \\
	P &=& x^2 D + \ds\frac{x^2}{2} D^2, \\
	H &=& \ds\frac{M(x,1+D)}{1+D}.
\end{array}
\end{equation}
 \end{lemma}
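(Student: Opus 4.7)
My plan is to verify each equation of the system by exhibiting a combinatorial decomposition of the corresponding class of networks and translating it to generating functions via the symbolic method for labeled classes. The first equation, $D = L+S+P+H$, is immediate from the classification in the preceding lemma, since $\mathcal{D}$ was defined as the disjoint union $\mathcal{L}+\mathcal{S}+\mathcal{P}+\mathcal{H}$.

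The equations for $S$, $P$, $I$ and $L$ each follow from a short case analysis. For $S$, the first internal cut vertex of a series network (after removing the root edge) splits it as a head in $\mathcal{D}$ concatenated with a tail which is necessarily non-series, yielding $S=D(D-S)$. For $P$, a parallel network decomposes into the root edge plus either a single non-parallel network connecting the poles (contribution $x^2 D$, with $x^2$ for the two poles) or an unordered pair of non-parallel networks between the poles (contribution $x^2 D^2/2$, with $1/2$ for the pair symmetry). For $I$, cutting the isthmus separates the graph into two components each carrying exactly one pole of degree two; each half is identified with a loop network with one of its poles removed, contributing a factor $L/x$, so that $I=L^2/x^2$. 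The loop equation $L=(x^2/2)(I+D-L)$ has a similar flavor: the loop forces a specific local cubic structure with two new vertices (factor $x^2$) and an involutive symmetry (factor $1/2$), and what is attached to it is any network whose root is itself not a loop, enumerated by $\mathcal{I}+\mathcal{S}+\mathcal{P}+\mathcal{H}=I+D-L$.

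The last equation follows from the substitution principle together with Whitney's theorem. A 3-connected network is obtained from a 3-connected cubic planar graph rooted at an oriented edge, enumerated by $M(x,y)$ with $y$ marking edges, by replacing each non-root edge with either the edge itself (contribution $1$) or with a network in $\mathcal{D}$ (contribution $D$). This corresponds to the substitution $y \mapsto 1+D$, and dividing by $1+D$ removes the spurious contribution of the root edge, which is not substituted. Whitney's theorem ensures that the underlying embedding is unique up to reflection, so no additional symmetry correction is required for the 3-connected core.

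The main obstacle I anticipate is the symmetry bookkeeping in the loop, parallel and isthmus cases: one must verify that the local cubic structure around a loop, a double edge or an isthmus is uniquely determined up to the symmetries already recorded by the explicit factors $1/2$ and $1/x^2$, and that every network is produced exactly once by the decomposition with its correct labeled weight. Once this is in place, each equation follows by a direct translation of the combinatorial construction into operations on exponential generating functions.
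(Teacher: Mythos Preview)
Your overall approach is the intended one (the paper itself does not prove the lemma but defers to \cite{cubic}, where the same network-by-network combinatorial decomposition is carried out). Your arguments for $D=L+S+P+H$, for $S$, for $I$, for $L$, and for $H$ are correct and match the standard proofs.

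The explanation you give for $P$, however, does not match the formula. You write that the two terms correspond to ``a single non-parallel network connecting the poles'' and ``an unordered pair of non-parallel networks between the poles'', but the equation reads $P = x^2 D + \tfrac{x^2}{2}D^2$ with $D$, not $D-P$. In the cubic setting the poles have degree exactly~$3$, so the parallel core is forced to be the $3$-bond on $\{s,t\}$; the decomposition is: keep the root edge and replace either one or both of the two non-root $st$-edges by an \emph{arbitrary} network in $\mathcal{D}$ (any type, including parallel --- there is no maximality issue because the bond size is fixed at three). The term $x^2D$ is precisely the case where only one non-root edge is replaced, so that the root edge is part of a \emph{double} edge; this is confirmed later in the paper, where ``parallel networks encoded by $x^2D(x)$'' are identified as exactly the networks whose root edge is doubled. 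The term $\tfrac{x^2}{2}D^2$ is the case where both non-root edges are replaced. Your reading ``the root edge plus a single network between the poles'' would leave the poles with degree~$2$, not~$3$.

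So the gap is local to the parallel case: drop the spurious ``non-parallel'' restriction (which you may be importing from the non-cubic theory, where the bond can have arbitrarily many edges and maximality matters), and make the role of the $3$-bond core and the resulting double edge explicit in the $x^2D$ term. The rest of your argument is fine.
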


Notice that all the functions involved are even, in agreement with the fact that a cubic graph has an even number of vertices.
Using the relations $D-L=S+P+H$ and $D-S = L+P+H$, the system \eqref{eq:networks} can be rewritten so that all the functions on the right hand-side have non-negative coefficients when expanded in terms of $x,L,I, S, H$ and $D$.
This is also true for the equation $H = M(x,1+D)/(1+D)$, since $M(x,y)$ is divisible by $y$.
It follows (see \cite{drmota}) that there is a unique solution of the system with non-negative coefficients, which is the combinatorial solution.

%$$
%\def\arraystretch{2}
%\setlength\arraycolsep{30pt}
%\begin{array}{lll}
%&D = L+S+P+H,\\
%&L = \ds\frac{x^2}{2} (I+S+P+H),\\
%&I = \ds\frac{L^2}{x^2},\\
%&S = D(L+P+H),\\
%&P = x^2 D + \ds\frac{x^2}{2} D^2,\\
%&H = \ds\frac{M(x,1+D)}{1+D}.
%\end{array}
%$$

Let $C(x)$ be the generating function of connected cubic planar graphs, and $C^\bullet(x)=x C'(x)$ that of connected graphs rooted at a vertex.
As shown in \cite{cubic}, $C^\bullet(x)$ can be expressed in terms of networks as
\begin{equation}\label{eq:C rooted}
	3C^\bullet(x) = D(x) + I(x)-L(x)  -x^2 D(x) - L(x)^2.
\end{equation}
The factor 3 comes from double counting since at every root vertex $v$ we have 3 possible root edges with $v$ as a tail.
The term $D(x)+I(x)$ encodes all types of networks, from which one has to subtract those which are not simple.
These are $\mathcal{L}$, where the root edge is a loop, and those where the root edge is a double edge: parallel networks encoded by $x^2D(x)$, and series networks encoded by $L(x)^2$.

\paragraph{The Dissymmetry Theorem for tree-decomposable classes.}

We follow the formulation in \cite{dissymetry}.
A class of graphs ${\cal A}$ is said to be {\em tree-decomposable} if for each graph $\gamma\in {\cal A}$, we can associate in a unique way a tree $\tau(\gamma)$ whose nodes are distinguishable (for instance, by using the labels on the vertices of $\gamma$).
Let ${\cal A}_{\bullet}$ denotes the class of graphs in ${\cal A}$ where a node of $\tau(\gamma)$ is distinguished.
Similarly, ${\cal A}_{\bullet - \bullet}$ is the class of graphs in ${\cal A}$ where an edge of $\tau(\gamma)$ is distinguished, and ${\cal A}_{\bullet \rightarrow \bullet}$ those where an edge $\tau(\gamma)$ is distinguished and given a direction.
The {\em Dissymmetry Theorem for trees} by \cite{species} allows us to express the class of unrooted trees in terms of the classes of trees with a distinguished vertex, edge and directed edge.
This result can be extended to tree-decomposable classes in the following way (see \cite{dissymetry}).

\begin{theorem}\label{thm:species}
Let ${\cal A}$ be a tree-decomposable class.
Then
$$
    {\cal A} + {\cal A}_{\bullet \rightarrow \bullet} \simeq {\cal A}_{\bullet} + {\cal A}_{\bullet - \bullet},
$$
where $\simeq$ is a bijection preserving the number of nodes.
\end{theorem}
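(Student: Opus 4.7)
The plan is to reduce the claim to a combinatorial identity on the underlying decomposition trees and then exhibit an explicit bijection based on the classical notion of the \emph{center} of a finite tree. Since $\mathcal{A}$ is tree-decomposable, every $\gamma \in \mathcal{A}$ carries a decomposition tree $\tau(\gamma)$ with distinguishable nodes, and the four classes in the statement correspond to the four ways of equipping $\gamma$ with no additional mark, a marked node, a marked edge, or a marked directed edge of $\tau(\gamma)$. It therefore suffices, for each fixed tree $\tau$ with $n$ nodes, to produce a bijection between $\{\text{no mark}\}\cup\{\text{directed edges of }\tau\}$ and $\{\text{vertices of }\tau\}\cup\{\text{edges of }\tau\}$; on the counting side this matches the trivial identity $1+2(n-1)=n+(n-1)$.

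Next, I recall that a finite tree has a unique center, obtained by iterated leaf-peeling, which is either a single vertex $c$ or a single edge $c=\{a,b\}$. I send the unmarked object $\gamma$ to $(\gamma,c)$; this lies in $\mathcal{A}_{\bullet}$ when $c$ is a vertex and in $\mathcal{A}_{\bullet-\bullet}$ when $c$ is an edge. For a directed edge $u\to v$ of $\tau(\gamma)$, let $d(\cdot)$ denote the distance to $c$, interpreted as $\min(d(\cdot,a),d(\cdot,b))$ in the edge-center case, and distinguish three scenarios:
\begin{itemize}
    \item if $d(v)<d(u)$, the arc points toward the center; send $(\gamma,u\to v)$ to $(\gamma,u)\in\mathcal{A}_{\bullet}$;
    \item if $d(u)<d(v)$, the arc points away from the center; send $(\gamma,u\to v)$ to $(\gamma,\{u,v\})\in\mathcal{A}_{\bullet-\bullet}$;
    \item if $d(u)=d(v)$, which happens only when $\{u,v\}$ is the center edge, send $(\gamma,u\to v)$ to $(\gamma,v)\in\mathcal{A}_{\bullet}$, the head of the arc.
\end{itemize}

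To verify bijectivity I invert the construction. Given $(\gamma,x)$ on the right-hand side with $x$ a vertex or an edge, the preimage is the unmarked $\gamma$ exactly when $x=c(\tau(\gamma))$; otherwise there is a unique directed edge whose image is $(\gamma,x)$, obtained by orienting the first edge on the path from $x$ to the center according to the rules above. A short case analysis separating the vertex-center and edge-center situations shows that every element of the right-hand side is hit exactly once. The only delicate point, which is also the main (mild) obstacle, is the edge-center case: the two orientations $a\to b$ and $b\to a$ of the center edge must be absorbed somewhere, since the undirected center edge itself is already paired with the unmarked $\gamma$. The third bullet above handles exactly this, sending $a\to b$ to $(\gamma,b)$ and $b\to a$ to $(\gamma,a)$; no conflict arises, because neither $a$ nor $b$ can be hit by the first rule, as they have no neighbour strictly closer to $c$. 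Finally, since the construction only reshuffles the mark on a fixed decomposition tree, the underlying graph $\gamma$, and in particular its number of nodes, is preserved.
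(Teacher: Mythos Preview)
Your proof is correct and is precisely the classical center-based argument for the Dissymmetry Theorem. The paper does not actually prove this statement; it merely cites the original source \cite{species} and the extension to tree-decomposable classes in \cite{dissymetry}, so there is no paper proof to compare against beyond noting that the center-of-tree bijection you wrote out is the standard one found in those references.
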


% % % % % % % % % % % % % % % % % % % % % %% % % % % % % % % % % % % % % % % % % % % %
% % % % % % % % % % % % % % % % % % % % % %% % % % % % % % % % % % % % % % % % % % % %
\section{Proofs of enumerative results}
% % % % % % % % % % % % % % % % % % % % % %% % % % % % % % % % % % % % % % % % % % % %
% % % % % % % % % % % % % % % % % % % % % %% % % % % % % % % % % % % % % % % % % % % %

Before proving Theorem \ref{thm:count-connected}, we mention the corresponding proof presented  in  \cite{cubic}.
It is shown  by algebraic elimination from the system \eqref{eq:networks}, that one can obtain a single polynomial equation $\Phi(x,D)$ satisfied by $D$.
It is then claimed in \cite{cubic} that the root $\rho \approx 0.319224$ of the discriminant of $\Phi$ with respect to $D$ provides the dominant singularity of $D$.
However, the analysis is incomplete since one must guarantee that this is the only dominant singularity and that there are no smaller singularities arising from a potential branch point.
Then a singular expansion of $D(x)$ at $\rho$ of square-root type is somehow guessed  as
$$
	D(x) = D_0 + D_2 X^2 + D_3 X^3 + O(X^4), \qquad
 	X = \sqrt{1-x/\rho}.
$$
The validity of this expansion is not fully established and the coefficients $D_i$ are apparently not computed in~\cite{cubic}.
Using the Transfer Theorem, an estimate for the coefficients of $D$ is derived, which implies a corresponding estimate for the coefficients of $C$.
This remark also applies to the proof of Theorem \ref{thm:count-2connected}.
%Our first result provides a rigorous proof of such an estimate.

In order to provide completely rigorous proofs for this and the subsequent enumeration results, we will use the following technical lemma.
We recall that the discriminant of an algebraic function $A$ is characterized by the common roots of the  minimal polynomial of $A$ and its derivative with respect to $A$.
Recall also that $\tau=\frac{27}{256}$ is the singularity of the generating function $T(z)$ of triangulations.

\begin{lemma}\label{lem:technical}
Let $A(x)$ be an even algebraic power series with positive coefficients which satisfies an  equation of the form
	$$
    H(x,A(x),T(x^2(1+A(x))^3))=0,
	$$
where $H$ is a polynomial, and $T$ is the generating function of triangulations as in (\ref{eq:Tu}). Let $p(x)$ be the discriminant of~$A$.
	
Assume that the equations $H=0$ and $x^2(1+A(x))^3 = \tau$ have a common positive solution, and let $(x_0,A_0)$ be the one with smallest $x_0$ value.
Assume in addition the following conditions:
\begin{enumerate}
\item $x_0$ is the smallest positive root of $p(x)$, and  $\pm x_0$ are the only  roots of $p(x)$ of modulus $x_0$.
\item $H_A(x_0,A(x_0), T(x_0^2(1+A(x_0))^3))\ne0$, where $H_A$ is the derivative of $H$ with respect to the second variable.
\item $A'(x_0) $ is finite and $A''(x_0)=+\infty$, where both evaluations are taken as limits as $x \to x_0^-$.

\end{enumerate}
\noindent
Then $x_0$ is the unique dominant singularity of $A(x)$, and the expansion at $x_0$ is of the form
	$$
		A(x) = A_ 0 + A_2X^2 + A_3 X^3 + O(X^4),
	$$
	where $X= \sqrt{1-x/x_0}$, and $A_3 >0$ is a computable  algebraic number.
	Furthermore, the following asymptotic estimate holds for $n$ even:
	$$
		[x^n] A(x) \sim \frac{3A_3}{2 \sqrt{\pi}}  n^{-5/2} x_0^{-n}.
	$$
\end{lemma}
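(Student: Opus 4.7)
The plan is to establish the Puiseux expansion of $A(x)$ at $x_0$ and then invoke the Transfer Theorem (Lemma~\ref{th:transfer}).

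First I would identify $x_0$ as the radius of convergence of $A$. Since $A$ is algebraic, its finite singularities lie among the zeros of the discriminant $p(x)$ (and of the leading coefficient of its minimal polynomial). By Pringsheim's Theorem, applied to $A$ with non-negative coefficients, the radius of convergence $\rho^{*}$ is itself a singularity on the positive real axis, hence a positive zero of $p(x)$; condition~1 then forces $\rho^{*}\ge x_0$. Conversely, condition~3 ($A''(x_0^-)=+\infty$) forces $\rho^{*}\le x_0$, so $\rho^{*}=x_0$. Condition~1 also states that the only zeros of $p$ on the circle $|x|=x_0$ are $\pm x_0$, which are therefore the only dominant singularities of $A$; the even symmetry of $A$ accounts for the mirror singularity at $-x_0$.

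Next I would derive the singular expansion. Set $X=\sqrt{1-x/x_0}$ and $Z=\sqrt{1-z/\tau}$ with $z=x^2(1+A)^3$. The common-solution hypothesis $x_0^2(1+A_0)^3=\tau$ means that $T(z)$ is probed near its own singularity as $x\to x_0^-$. Condition~3 (finiteness of $A'(x_0^-)$) rules out an $X^1$ term in the Puiseux expansion, so I would write $A=A_0+a_2X^2+a_3X^3+a_4X^4+\cdots$ and compute
$$
1-\frac{z}{\tau}=\Bigl(2-\frac{3a_2}{1+A_0}\Bigr)X^2-\frac{3a_3}{1+A_0}X^3+O(X^4),
$$
so $Z$ is of order $X$; the singular expansion~\eqref{eq:sing_U} of $U$ (and hence of $T$ via~\eqref{eq:Tu}) then provides an expansion $T(z)=T(\tau)+c_2X^2+c_3X^3+O(X^4)$ in which the cubic term $c_3$ carries the irrational factor $(\sqrt{6}/24)\,\kappa^{3/2}$ with $\kappa=2-3a_2/(1+A_0)$. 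Substituting into $H(x,A,T(z))=0$ and Taylor-expanding $H$ around $(x_0,A_0,T(\tau))$, I would match successive powers of $X$. Condition~2 makes $H_A\ne 0$ at the critical point, so each linearized equation is non-degenerate: the $X^2$ equation solves for $a_2=A_2$, and the $X^3$ equation solves for $a_3=A_3$ as a computable algebraic number inheriting a non-trivial contribution from $c_3$. Finally, using $dX/dx=-1/(2x_0X)$ one obtains $A''(x)\sim 3A_3/(4x_0^2 X)$ as $x\to x_0^-$; the divergence in condition~3 then forces $A_3>0$.

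Finally I would invoke the Transfer Theorem. The only non-analytic term through order $X^4$ in the expansion is $A_3(1-x/x_0)^{3/2}$, whose coefficients satisfy $[x^n]A_3(1-x/x_0)^{3/2}\sim (3A_3/(4\sqrt{\pi}))\,n^{-5/2}x_0^{-n}$ via $\Gamma(-3/2)=4\sqrt{\pi}/3$. Doubling for the contribution of the mirror singularity $-x_0$ (and noting that odd coefficients vanish by evenness) yields, for $n$ even,
$$
[x^n]A(x)\sim\frac{3A_3}{2\sqrt{\pi}}\,n^{-5/2}\,x_0^{-n},
$$
as claimed. The main obstacle is the coefficient matching of the second step: one must verify that the $T_{sing}$-contribution enters at the predicted $X^3$ order and that the resulting linear equation for $a_3$ is solvable — both of which rest entirely on the non-vanishing of $H_A$ at the critical point together with the explicit form of the $T$-expansion. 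The remaining steps — locating the singularity and performing the transfer — are standard once the expansion is in hand.
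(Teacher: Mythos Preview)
Your argument is correct and tracks the paper's proof closely: identify $x_0$ as the radius of convergence via Pringsheim and condition~3, obtain the Puiseux expansion in $X=\sqrt{1-x/x_0}$ by indeterminate coefficients (using $H_A\ne 0$ and the known expansion of $T$ at $\tau$), read off $A_1=0$ and $A_3>0$ from condition~3, and transfer with the contributions of $\pm x_0$ added. The only step the paper makes explicit that you omit is verifying $\Delta$-analyticity at $\pm x_0$ before invoking the Transfer Theorem: the Puiseux expansion gives analyticity only in a slit neighbourhood of $x_0$, and the paper then uses a standard compactness argument (citing \cite[Theorem~2.19]{drmota}) together with condition~1 to extend to a full $\Delta$-domain.
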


%In all our applications $f(x,A)$ will be either a rational or a quadratic function.

\begin{proof}
	%By Pringsheim's theorem \cite[Theorem IV.6]{flajolet}
The potential singularities of an algebraic functions are among the roots of its discriminant \cite[Section VII.7.1]{FS}.
Since $x_0$ is the smallest positive root of $p(x)$, $A(x)$ is analytic in the disk $|x| < x_0$.
Since $A''(x_0)=+\infty$ (recall that by hypothesis $A(x)$ has positive coefficients), $A(x)$ is not analytic at $x_0$. It follows that $x_0$ is a dominant singularity.
The condition $H_A(x_0,A(x_0),T(x_0^2(1+A(x_0))^3))\ne0$ guarantees that $x_0$ is not at the same time a branch point when solving $H=0$.
Since $T(z)$ has an expansion at $\tau$ in powers of $\sqrt{1-z/\tau}$, by indeterminate coefficients on the singular exponents $A(x)$ has a Puiseux expansion at $x=x_0$ of the form $\sum_{i\ge0} A_iX^i$ at $x_0$.
The condition $A'(x_0)< + \infty$ implies that  $A_1=0$, and $A''(x_0) =+ \infty$ implies that $A_3 \ne 0$, as claimed.
The coefficients $A_i$ are algebraic numbers since $A$ is an algebraic function.
	
Because of the expansion in powers of $X$, $A(x)$ is analytic in a neighborhood of $x_0$ slicing the ray $[x_0,+\infty]$.
Since $p(z)$ has no other root of modulus $x_0$, it follows that $\pm x_0$ are the only singularities satisfying that $|x|=x_0$.
A standard compactness argument (see the last part of the proof of Theorem 2.19 in \cite{drmota}) shows that $A(x)$ is analytic in a $\Delta$-domain at both $x_0$ and $-x_0$.
Hence we can apply the Transfer Theorem \cite[Corollary 6.1]{FS}, and obtain the estimate for $n$ even as claimed, using $\Gamma(-3/2) = 4 \sqrt{\pi}/3$.
Notice that the contributions from $x_0$ and $-x_0$ are added, so that the multiplicative constant is $2A_3/\Gamma(-3/2)$.
\end{proof}

\paragraph{Note.}

When computing Puiseux expansions of an algebraic function with \textsc{Maple} it may be that several solutions appear due to the different branches at a given point.
In all our proofs we find a single expansion containing a non-zero term $A_3X^3$, which has to correspond to the  branch of the combinatorial solution due to the above considerations.

% % % % % % % % % % % % % % % % % % % % % %
\subsection{Connected cubic planar graphs}\label{subsec:simple-connected}
% % % % % % % % % % % % % % % % % % % % % %

%We first solve the problem of enumerating cubic networks and then use \eqref{eq:C rooted} to enumerate cubic graphs.

\paragraph{Proof of Theorem \ref{thm:count-connected}.}

We start by obtaining a single equation for $D$ from the system \eqref{eq:networks}.
First we combine the second and fourth equations and solve for $L$ as
$$
    L = 1 + \frac{x^2}{2} - \sqrt{\frac{x^4}{4} + 1 - x^2(D-1)}.
$$
The negative square-root is chosen so that $L$ has non-negative coefficients.
Then we have
$$
	D = \frac{D^2}{1+D} + x^2D + \frac{x^2}{2} D^2 + 1 + \frac{x^2}{2} - \sqrt{\frac{x^4}{4} + 1 - x^2(D-1)} + \ds\frac{M(x,1+D)}{1+D}.
$$
A simple manipulation together with \eqref{eq:TM} gives the equation
\begin{equation}\label{eq:D-connected}
	F(x,D) = (1+D)\sqrt{\frac{x^4}{4} + 1 - x^2(D-1)} - \frac{T(x^2(1+D)^3)}{2} - 1 = 0.
\end{equation}
This equation can be written as 
$$H(x,D, T(x^2(1+D)^3))=\left(1+\frac{1}{2}T\left(x^2(1+D)^3\right)\right)^2-(1+D)^2\left(\frac{x^4}{4}+1-x^2(D-1)\right)=0,$$
and $H$ is a polynomial. %l and $D$ satisfies that $H(x,D, T(x^2(1+D)^3))=0$.
We can apply Lemma \ref{lem:technical} with $H$, however some of the computations will be done with respect to $F$.

The equations $F=0$ and $x^2(1+D(x))^3 = \tau$ have a unique positive solution, given by
$$
	\rho \approx 0.319225, \qquad D_0 = D(\rho) \approx 0.011526,
$$
We now proceed to check that the conditions of Lemma \ref{lem:technical} hold.
We eliminate from (\ref{eq:D-connected}) and (\ref{eq:Tu}) and obtain the minimal polynomial $p(x)$ of $D$, which is equal to the one displayed  in the statement.
We check (algebraically) that $\rho$ is a root of $p(x)$, and check (numerically) that $\rho$ is the root with smallest modulus, and that $\pm\rho$ are the only roots of modulus $\rho$.
We use the relations $T'(z) = (1-U(z))^{-2} = (1-U(z))U(z)/z$ and $U(\rho^2(1+D_0)^3) =1/4$ to compute
$$
	F_D(\rho,D_0) = \sqrt{\frac{\rho^4}{4} + 1 - \rho^2(D_0-1)} - \frac{\rho^2(1+D_0)}{2\sqrt{\frac{\rho^4}{4} + 1 - \rho^2(D_0-1)} } - \frac{9}{32(1+D_0)}
	\approx 0.734617 \ne 0.
$$
Next we differentiate $F$ with respect to $x$ and solve for $D'(x)$ to obtain $D'(\rho) \approx 0.370297 < +\infty $.
The derivative $F_x(x,D(x))$ contains the term $U(x^2(1+D(x)^3)$, hence the second derivative contains the term $U'(x^2(1+D(x))^3)$.
It follows that the expression for $D''(\rho)$ contains the term $U'(\rho(1+D_0)^3) = U'(\tau)$, which is infinite because of \eqref{eq:sing_U}. All the other terms, including $D'(\rho)$, remain finite, hence $D''(\rho) =+\infty$.

%We compute next the singular expansions of $D(x) $ at $ \rho$, which is of the form
We compute the Puiseux expansion of $D(x)$ at $\rho$
 $$
 	D(x) = D_0 + D_2 X^2 + D_3 X^3 + O(X^4), \qquad X = \sqrt{1-x/\rho},
 $$
and obtain $D_3 \approx 0.254267$.
%The  $D_i$'s are found by indeterminate coefficients by plugging $D(x) = \sum D_i X^i$ into Equation \eqref{eq:D-connected}.
%The only non-trivial computation is the expansion of $T(x^2(1+D(x))^3)$.
%Taking into account that $\rho^2(1+D_0)^3 = \tau$ and using Equation \eqref{eq:sing_T3}, we expand
% $T(x^2(1+D(x))^3)$ in powers of $X=\sqrt{1-x/\tau} $ and get
 %$$
   % T(x^2(1+D(x))^3) =
   % \ds\frac{1}{8} - \frac{9}{16}v\cdot X^2
   % + \ds\frac{9}{16}\left(\frac{3}{2\sqrt{2}} v\sqrt{v} + \frac{D_3}{1+D_0}\right)\cdot X^3
   % + O(X^4),
   % \qquad \text{with } v = \frac{2}{3} - \frac{D_2}{1+D_0}.
 %$$
% \marginpar{Coeffs in appendix}
 %Solving by indeterminate coefficients and using again $\rho^2(1+D_0)^3=\tau$, gives $D_2$ and $D_3^2$ as  rational functions in $D_0$. The exact expressions are too long to be reproduced here %and are  given in the Appendix.
%and we show only the numerical values:
%$$
 %	D_2 \approx -0.118208, \qquad D_3 \approx 0.254267.
%$$

Finally, plugging the singular expansions of $D$ into Equation \eqref{eq:C rooted} we obtain the expansion
\begin{equation}\label{eq:expansionCrooted}
    C^\bullet(x) = C_0^\bullet + C_2^\bullet X^2 + C_3^\bullet X^3 + O(X^4),
\end{equation}
 where $C_3^\bullet \approx 0.072048$.
% By Lemma \ref{th:transfer} we obtain the estimate (for $n$ even)
For $n$ even we deduce the estimate
$$
 	n\cdot c_n = n! [x^n] C^\bullet(x) \sim {3C_3^\bullet \over 2\sqrt{\pi}}\cdot n^{-5/2}\cdot \rho^{-n} n! ,
$$
and
$$
	c_n \sim c  \cdot n^{-7/2} \gamma^{n} n!, \qquad c = 3C_3^\bullet/(2\sqrt{\pi}) \approx 0.060973, \qquad \gamma = \rho^{-1} \approx 3.132591.
$$
This concludes the proof of Theorem \ref{thm:count-connected}. \qed

% % % % % % % % % % % % % % % % % % % % % %
\subsection{Cubic planar graphs}\label{subsec:simple-general}
% % % % % % % % % % % % % % % % % % % % % %

In order to prove Theorem \ref{thm:count-all} we need an expression of $C(x)$ in terms of the generating functions of networks.
Given Equation \eqref{eq:C rooted}, it would be sufficient to integrate $D(x)$, which is an algebraic function, but we have not been able to solve this integration problem.
This may be due to the fact that the algebraic equation defining $D(x)$ has genus 20; in a similar situation when integrating the generating function of  general planar networks, the corresponding curve has genus 0 (see \cite[page 319]{gn2009}) and determines a rational curve.
Additionally, observe that by integrating the singular expansion for $xC'(x)$ will not lead the coefficient $C_0$, which is necessary to obtain the asymptotic estimate for $[x^n]G(x)$.

Instead, we use the Dissymmetry Theorem.
This approach is more combinatorial and has the additional advantage of allowing us to prove Theorem \ref{thm:count-multi}, where the algebraic techniques used in \cite[Corollary 1.2]{4-regular} do not apply.

The key tool is to associate, to a cubic planar graph $\gamma$, a canonical tree $\tau(\gamma)$ and to encode its different rootings using the generating functions of networks introduced before.
We follow the development and terminology from \cite{dissymetry}, adapted to our situation, where the main novelty is that, due to their bounded degree, there is a finite number of cases to encode cut-vertices using networks.

\begin{figure}[htb]
    \centering
    \includegraphics[scale=0.9]{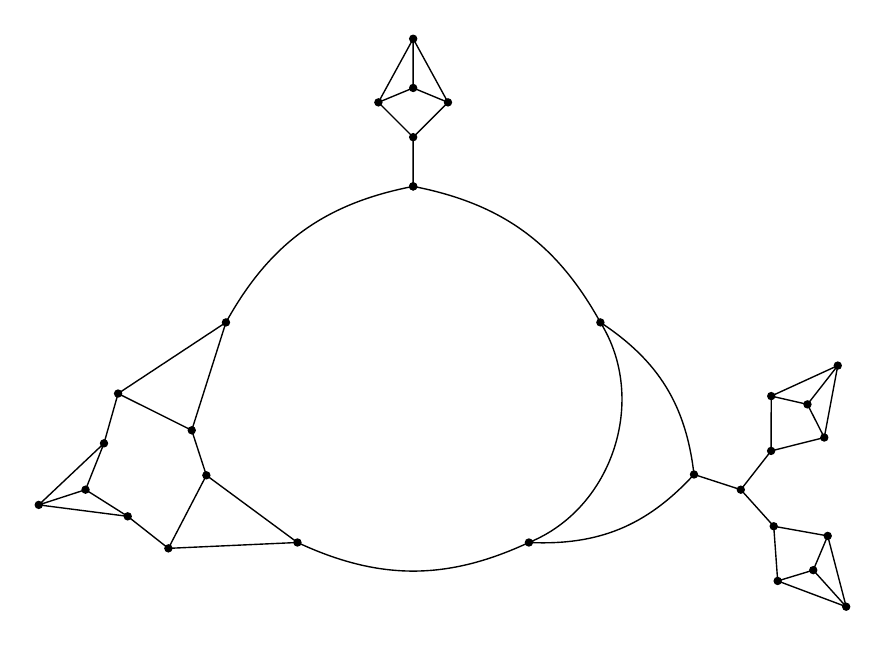}
    \includegraphics[scale=0.9]{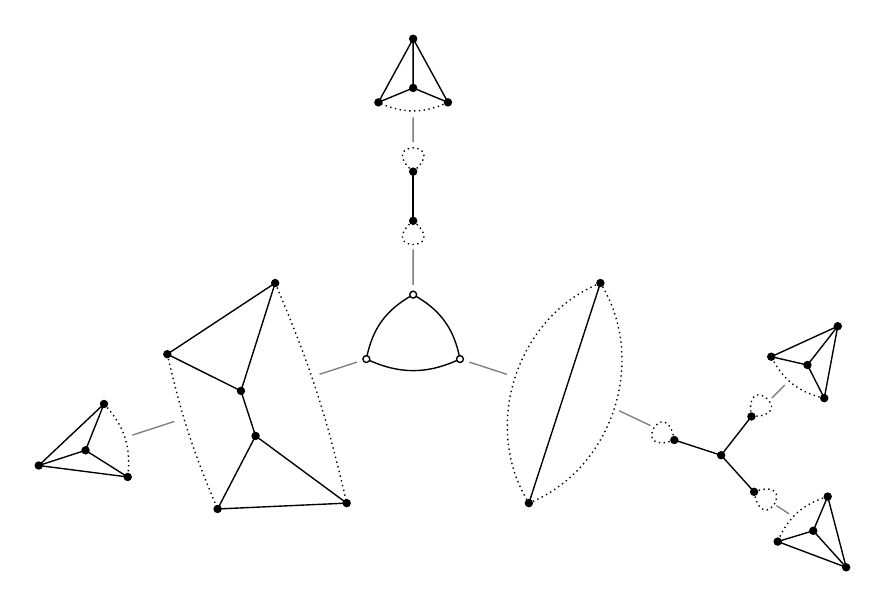}
   \includegraphics[scale=1]{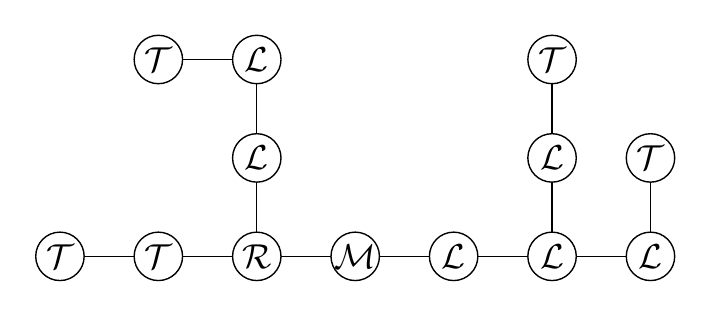}
    \caption{A connected cubic planar graph $\gamma$ and its tree-decomposition $\tau(\gamma)$.}
    \label{fig:tree-decomposition}
\end{figure}

\medskip\noindent{\it Rooting at a vertex.}
The tree $\tau(\gamma)$ has four different types of nodes, namely $\mathcal{R}$, $\mathcal{M}$, $\mathcal{T}$ and $\mathcal{L}$ corresponding respectively to the series, parallel, 3-connected and loop constructions, as illustrated by Figure \ref{fig:tree-decomposition}.
\medskip

An ${\cal R}$-node is a cycle of length at least 3 in which we replace every vertex with a network of type $\mathcal{D} - \mathcal{S}$. Notice that, by maximality of the series construction, two ${\cal R}$-nodes cannot be adjacent in the tree.
The generating function counting trees where an ${\cal R}$-node is distinguished is given by
$$
    C_{\cal R} =\text{Cycl}_{\geq 3}(D-S) =
    %-\frac{1}{2} \left( \log (1-(D-S)) + (D-S) + \frac{(D-S)^2}{2} \right).
\frac{1}{2} \left( \log\frac{1}{1-(D-S)} - (D-S)) - \frac{(D-S)^2}{2} \right).
$$

An ${\cal M}$-node is a {\em 3-bond graph} (a graph with two vertices connected by three parallel edges) in which we replace at least two of its edges with a network of type $\mathcal{D}$.
The generating function counting trees where an ${\cal M}$-node is distinguished is given by
$$
    C_{\cal M} = \frac{x^2}{2} \left( \frac{D^2}{2} + \frac{D^3}{6} \right).
$$

An ${\cal T}$-node encodes a 3-connected cubic planar graph, the {\em core}, in which every edge is (possibly) replaced by a network of type $\mathcal{D}$.
%This is directly given by substituting the generating function $1+D$ into the edges of $\overline{M}(x,y)$.
The generating function counting trees where a ${\cal T}$-node is distinguished is given by
$$
    C_{\cal T} = \overline{M}(x,1+D),
$$
where $\overline{M}$ is as in Equation \eqref{eq:MM}.

\medskip

An ${\cal L}$-node encodes a cut-vertex of $\gamma$ which separates the graph into two or three connected components.
The first case is illustrated by the leftmost graph of Figure \ref{fig:L_nodes} and is obtained by replacing the root of a loop-network with a network of type $\mathcal{D}-\mathcal{L}$ (it cannot be another loop-network as it would create a double-edge).
The second case is illustrated by the middle graph of Figure \ref{fig:L_nodes} and is obtained by gluing together three loop-networks.
The generating function counting trees where an ${\cal L}$-node is distinguished is given by
$$
    C_{\cal L} = \frac{L(D-L)}{2} + \frac{L^3}{6x^2}.
$$

\begin{figure}[htb]
\centering
\begin{minipage}{.25\textwidth}
    \centering
    \includegraphics[scale=1]{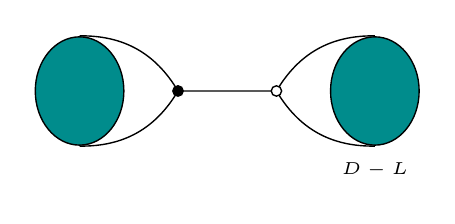}
\end{minipage}\qquad
\begin{minipage}{.25\textwidth}
    \includegraphics[scale=1]{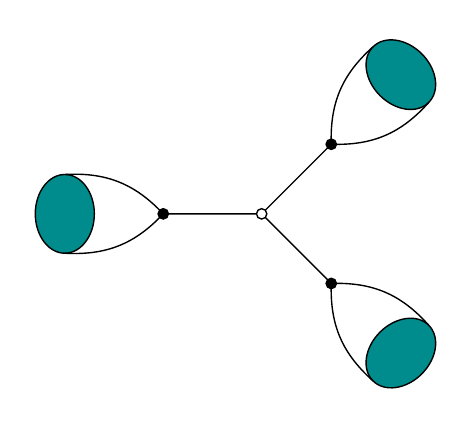}
\end{minipage}\qquad
\begin{minipage}{.25\textwidth}
    \includegraphics[scale=1]{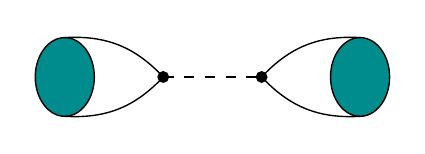}
\end{minipage}
\caption{In each of the two leftmost graphs, the white cut-vertex encodes one of the two types of ${\cal L}$-nodes. The dashed edge of the rightmost graph encodes an edge between two $\mathcal{L}$-nodes.}
\label{fig:L_nodes}
\end{figure}

\medskip\noindent{\it Rooting at an edge.}
The endpoint of an edge of $\tau(\gamma)$ can be any combination of pairs of nodes with the exception of $\mathcal{R}$--$\mathcal{R}$.

Let us describe how to get $C_{{\cal R}-{\cal M}}$. The adjacency between a ${\cal R}$-node and a $\cal M$-node can be described as an unordered pair of a series-network an a parallel network. This gives $\frac{1}{2}SP$. The same argument applies for the rest of the families where the vertices of the rooted edge are different. Finally, when the two vertices of the rooted edge are equal (case ${\cal T}-{\cal T}$ and ${\cal L}-{\cal L}$) we need to introduce an extra factor $1/2$ due to the extra symmetry of having in both sides of the edge the same combinatorial class.

 Resuming, the associated generating functions are listed below:
%The adjacency between a ${\cal R}$-node of the tree $\tau(\gamma)$ and any other node can be described as the cartesian product of a series-network with another network. Recall that by maximality, an edge between two $R$-nodes is impossible. Then, an adjacency with a ${\cal M}$-node (resp. ${\cal T}$-node, resp. ${\cal L}$-node) will be described by taking the cartesian product of a parallel-network (resp. $H$-network, resp. loop-network) with another network.
%
$$
\def\arraystretch{1.5}
\begin{array}{lllllllllll}
C_{{\cal R}-{\cal M}} &=& \frac{1}{2} S P, & \qquad & C_{{\cal R}-{\cal T}} &=&  \frac{1}{2} S H, &\qquad& C_{{\cal R}-{\cal L}} &=& \frac{1}{2} S L,
\\
C_{{\cal M}-{\cal T}} &=&  \frac{1}{2} P H, &\qquad &C_{{\cal M}-{\cal L}} &=& \frac{1}{2} P L, &\qquad& C_{{\cal M}-{\cal M}} &=& \frac{1}{4} P^2,
\\
C_{{\cal T}-{\cal L}} &=& \frac{1}{2} H L, &\qquad& C_{{\cal T}-{\cal T}} &=& \frac{1}{4} H^2, &\qquad& C_{{\cal L}-{\cal L}} &=& \frac{1}{2}\frac{L^2}{x^2}.
\end{array}
$$

For instance, $\mathcal{R}$--$\mathcal{M}$ corresponds to an unordered pair of a series and a parallel network, and similarly for the remaining expressions.

\medskip\noindent{\it Rooting at an oriented edge.}
If $\mathcal{A}$ and $\mathcal{B}$ are two nodes of different types, then $C_{{\cal A}\rightarrow {\cal B}} = C_{{\cal B}\rightarrow {\cal A}}$ and $C_{{\cal A}\rightarrow {\cal B}} = C_{{\cal A}-{\cal B}}$, because there are no symmetries.
When ${\cal A}={\cal B}$, we have
$C_{{\cal A}\rightarrow {\cal A}} =2 C_{{\cal A}-{\cal A}}$ because there are two possible orientations, hence
$$
    \def\arraystretch{1.5}
    \begin{array}{llll}
        C_{{\cal M}\rightarrow {\cal M}} = \frac{1}{2} P^2, &\qquad
        C_{{\cal T}\rightarrow {\cal T}} = \frac{1}{2} H^2, &
        \qquad C_{{\cal L}\rightarrow {\cal L}} = \frac{L^2}{x^2}.
    \end{array}
$$

\paragraph{Proof of Theorem \ref{thm:count-all}.}
Recall that $C$ is the generating function of unrooted connected cubic planar graphs.
A direct application of Theorem \ref{thm:species} to  the tree-decomposition described above  gives ${\cal C} + {\cal C}_{\bullet \rightarrow \bullet} \simeq {\cal C}_{\bullet} + {\cal C}_{\bullet - \bullet}$.
Translated into the associated generating functions, this yields
\begin{equation*}
\def\arraystretch{1.5}
\begin{array}{lll}
C &=& C_{\cal R} + C_{\cal M} + C_{\cal T} + C_{\cal L} + C_{{\cal M}-{\cal M}} + C_{{\cal T}-{\cal T}} + C_{{\cal L}-{\cal L}}\\
&& +C_{{\cal R}-{\cal M}} + C_{{\cal R}-{\cal T}} + C_{{\cal R}-{\cal L}} + C_{{\cal M}-{\cal T}} + C_{{\cal M}-{\cal L}} + C_{{\cal T}-{\cal L}}\\
&& -2(C_{{\cal R}\rightarrow {\cal M}} + C_{{\cal R}\rightarrow {\cal T}} + C_{{\cal R}\rightarrow {\cal L}} + C_{{\cal M}\rightarrow {\cal T}} + C_{{\cal M}\rightarrow {\cal L}} + C_{{\cal T}\rightarrow {\cal L}})\\
&& -(C_{{\cal M}\rightarrow {\cal M}} + C_{{\cal T}\rightarrow {\cal T}} + C_{{\cal L}\rightarrow {\cal L}}).
\end{array}
\end{equation*}
And using the previous expressions, this becomes
\begin{equation}\label{eq:dissymetry_simple}
\def\arraystretch{1.8}
\begin{array}{lll}
C(x) &=&
\ds\frac{x^2}{2}\left(\frac{D^2}{2} + \frac{D^3}{6}\right) + \overline{M}(x,1+D) + \frac{L^3}{6x^2}\\
&&-\ds\frac{1}{2}\left( \log(1 - D + S) + (D - S) + \frac{(D-S)^2}{2} + P(S + H) + HS + \frac{P^2 + H^2}{2} + \frac{L^2}{x^2} \right).
\end{array}
\end{equation}

Using the expansion in powers of $X=\sqrt{1-x/\rho}$ of each term on the right-hand side of Equation \eqref{eq:dissymetry_simple}, we compute the singular expansion of $C(x)$ at $\rho$, which is of the form
$$
    C_0 + C_2X^2 + C_4X^4 + C_5X^5 + O(X^6),
$$
where $C_0 \approx 0.000604$ and $C_5 \approx -0.028819$.
The fact that $C_3=0$ follows by integrating the expansion for $C^\bullet(x)$ in \eqref{eq:expansionCrooted}.
%Observe that as $C_3^{\bullet}$ is the smallest non null coefficient with an odd index of the singular expansions of $C^{\bullet}$, and because $C$ is essentially an integration of $C^{\bullet}$, then we have that $C_3 = 0$.

%To get rid of any branch-point coming from Equation \eqref{eq:dissymetry_simple}, let us observe that the composition scheme $C_{\cal T} = \overline{M}(x,1+D)$ is critical as $\overline{M}$ is simply an integral of $M$, thus having the same singular expansion than $M$, and the composition scheme $M(x,1+D)$ being himself critical. Now, having $|1-D+S|=0$ would render the function $1/|1 -D +S|$ non-analytic in the term $\frac{\partial}{\partial x} C_{\cal R}(x)$. This is but impossible given that $|D(\rho)| < 0.012$.
\medskip

Finally, the generating function of cubic planar graphs  $G(x) = \exp(C(x))$  has a singular expansion at $\rho$ of the form
$$
    G_0 + G_2X^2 + G_4X^4 + G_5X^5 + O(X^6),
$$
where $G_0  = e^{C_0}\approx 1.000604$ and $G_5 = e^{C_0} C_5 \approx -0.028837$.
An application of Theorem \ref{th:transfer} gives the estimate as claimed (analyticity in a $\Delta$-domain has been shown in the  proof of Theorem \ref{thm:count-connected}), where
$$
    g = 2G_5 /\Gamma(-3/2) \approx 0.061010.
$$
The probability that a random cubic planar graph is connected is then
$$
    p = c/g = e^{-C_0} \approx 0.999397.
$$
We remark that to get the right 6 decimal digits for $p$ we have actually computed $c$ and $g$ to higher precision.
 \qed

% % % % % % % % % % % % % % % % % % % % % %
\subsection{Two-connected cubic planar graphs}\label{subsec:simple-2connected}
% % % % % % % % % % % % % % % % % % % % % %

In order to enumerate  2-connected cubic graphs, we have to  discard the classes of networks that produce cut vertices, namely
$\mathcal{L}$ and $\mathcal{I}$.
We denote the corresponding classes with the same letters as in the previous two  sections, but one must be aware that they represent different classes, since they are restricted to 2-connected networks.
No confusion should arise as  we are not working with  connected and 2-connected networks at the same time.
%In order to avoid confusion,
%we call the resulting rooted graphs 2-connected networks and denote their generating function by $E$ instead of $D$.
The generating functions $S,P$ and $H$ have the same meaning as before, except that they are now restricted to 2-connected networks.
We have
\begin{equation}
    \def\arraystretch{2}
    \begin{array}{lll}
        D &=& S + P + H,\\
        S &=& D(D-S), \\
        P &=& x^2 D + x^2\ds\frac{D^2}{2}, \\
        H &=& \ds\frac{M(x,1+D)}{1+D}.
    \end{array}
\end{equation}

\paragraph{Proof of Theorem \ref{thm:count-2connected}.}
%The proof is very similar to that of Theorem \ref{thm:count-connected}.
Using again  the relation $M(x,y) = (T(x^2y^3)-x^2y^3))/2$,
we get a single polynomial equation for $D$, which after a simple manipulation becomes
\begin{equation*}%\label{eq:D-2conn}
H=F(x,D) = D  + \frac{x^2}{2}(1+D) - \frac{1}{2} T(x^2(1+D)^3) =0.
\end{equation*}
%The analysis of singularities follows the same lines as in the  proof of Theorem \ref{thm:count-connected}.
%Singularities coming from $T$ should satisfy
The equations
$$
	x^2(1+D)^3 = \tau,  \qquad F(x,D)=0
$$
have a unique positive  solution $\rho_b \approx 0.319523$ with $D_0 = D( \rho_b) \approx 0.010896$.
Eliminating $D$ from the previous equations gives the
minimal polynomial of $D$, which is the one in the statement.
As in the proof of Theorem~\ref{thm:count-connected} we check that $\rho$ is a root of $p(x)$, together with the remaining conditions on Lemma \ref{lem:technical}.
The proofs are along the same lines and are omitted to avoid repetition.
The Puiseux expansion at $\rho_b$ is
$$
	D(x) = D_0 + D_2 X^2 + D_3 X^3 + O(X^4), \qquad
X=\sqrt{1-x/\rho_b},
$$
with $D_3 \approx 0.233893$.

Let now $\overrightarrow{B}(x)$ be the generating function of 2-connected cubic planar graphs rooted at a directed edge. Then
$$
\overrightarrow{B}(x) = D(x) - x^2 D(x).
$$
The reason is that from the networks encoded by $D(x)$ we have to exclude the parallel networks with a double edge, that correspond to $x^2D(x)$. If now $B^\bullet$ is the generating function for 2-connected vertex-rooted cubic graphs, by double counting we have
$$
	B^\bullet(x) = \frac{\overrightarrow{B}(x)}{3}.
$$

Applying the Transfer Theorem we obtain for even $n$
$$
n\cdot b_n = n! [x^n] B^\bullet(x) \sim \frac{2(1 - \rho_b^2)D_3}{3\cdot \Gamma(-3/2)}\cdot n^{-5/2}\cdot \rho_b^{-n} n!,
$$
and from here the estimate on $b_n$  follows with $b = \frac{2(1 - \rho_b^2)D_3}{3\cdot \Gamma(-3/2)} \approx 0.059244.$
 \qed

% % % % % % % % % % % % % % % % % % % % % %
\subsection{Cubic planar multigraphs}
% % % % % % % % % % % % % % % % % % % % % %

Similarly to the simple case, we decompose connected cubic planar multigraphs using networks. In this situation we do not demand that removing the edge between the poles gives a simple graph. 
We also use the same notation for networks as before.
The equations are as follows.
\begin{equation}\label{eq:networks_multi}
    \def\arraystretch{1.6}
    \begin{array}{lll}
        D &=& L + S + P + H, \\
        L &=& \ds x^2 + x^2L + \frac{x^2}{2} (I + D - L), \\
        I &=& \ds\frac{L^2}{x^2}, \\
        S &=& D(D - S),\\
        P &=& x^2 + x^2D + x^2\ds\frac{D^2}{2},\\
        H &=& \ds\frac{M(x, 1 + D)}{1+D}.
    \end{array}
\end{equation}
The only differences with the system of equations describing the networks associated with simple graphs are the term $x^2$, in the equation for $P$, encoding the {\em 3-bond}, and the term $x^2(1+L)$, in the equation for $L$, encoding the cubic multigraph with two vertices and two loops, rooted at a loop and where the non-rooted loop is possibly replaced by a loop-network (see Figure \ref{fig:multi}).

\begin{figure}[htb]
\centering
    \raisebox{1.5ex}{\includegraphics[scale=1]{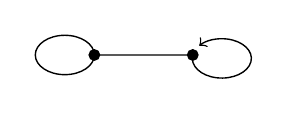}}
    \qquad
    \includegraphics[scale=1]{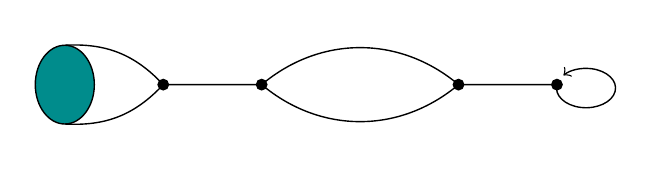}
\caption{Left is the only cubic multigraph with two vertices and two loops. Right is the same multigraph whose non-rooted loop has been replaced by a loop-network.}
\label{fig:multi}
\end{figure}
Using the same arguments as before, one can show that there exists a unique solution with non-negative coefficients of the above system, which is the combinatorial solution.

Let $C(x)$ be the generating function of  connected cubic planar multigraphs.
Due to the presence of  multiple edges and loops, there is no direct algebraic relation expressing $C^{\bullet}(x)$ in terms of networks. As in Section \ref{subsec:simple-general}, we need to resort once more to the Dissymmetry Theorem.

\paragraph{Proof of Theorem \ref{thm:count-multi}.}

We start by obtaining a single equation for $D$ from the system \eqref{eq:networks_multi}. First, we combine the second and the third equations and solve for $L$ as
$$
    L = 1 - \frac{x^2}{2} - \sqrt{\frac{x^4}{4} + 1 - x^2(D + 3)}.
$$
 Then we have
$$
    D = \frac{D^2}{1+D} + x^2 + x^2D + \frac{x^2}{2}D^2 + 1 - \frac{x^2}{2} - \sqrt{\frac{x^4}{4} + 1 - x^2(D + 3)} + \frac{M(x,1+D)}{1+D}.
$$
A simple manipulation together with \eqref{eq:TM} gives
\begin{equation}\label{eq:D_multi}
F(x,D)=    (1+D)\sqrt{\frac{x^4}{4} + 1 - x^2(D + 3)} - \frac{T\left(x^2(1 + D)^3\right)}{2} - 1 = 0.
\end{equation}
We rewrite as 
$$H\left(x,D,T\left(x^2(1+D)^3\right)\right)=\left(1+\frac{1}{2}T\left(x^2(1+D)^3\right)\right)^2-(1+D)^2\left(\frac{x^4}{4} + 1 - x^2(D + 3)\right)=0,
$$
where now $H$ is a polynomial.
We proceed as in the  proofs  of Theorems \ref{thm:count-connected} and \ref{thm:count-2connected}.
Equations \eqref{eq:D_multi} and $x^2(1+D)^3=\tau$  have a unique  positive solution
$$
    \rho_m \approx 0.250907 \text{ and } D_0 = D( \rho_m) \approx 0.187679.
$$
The minimal polynomial $p(x)$ of $D(x)$ is obtained by elimination and  is equal to the one in the statement.
We check that $\rho_m$ is a root of $p(x)$, together with the remaining analytic conditions of Lemma \ref{lem:technical}.

The rest of the proof is a further application of the Dissymmetry Theorem and is
very similar to that of Theorem \ref{thm:count-all} with some small changes.
The rooted tree-decompositions are the same, except that we have to update the corresponding classes to encode the {\em 3-bond} and the multigraph with two vertices and two loops.
Those changes only affect $\mathcal{M}$-nodes and $\mathcal{L}$-nodes.
The new equation for the generating function associated to $\mathcal{M}$-nodes is then
$$
    C_{\cal M} = \frac{x^2}{2}\left( 1 + D + \frac{D^2}{2} + \frac{D^3}{6}\right),
$$
As for $\mathcal{L}$-nodes, we need to introduce two new types of cut-vertices, those adjacent to a loop or to a double edge (see Figure \ref{fig:L-node-multi}).
The equation for the associated generating function becomes
$$
    C_{\cal L} = L + L^2 + \frac{L(D-L)}{2} + \frac{L^3}{6x^2}.
$$
Now when the tree is either rooted at an edge or at an oriented edge, we need to consider the new case when two cut-vertices are connected by a double edge (see the multigraph on the right of Figure \ref{fig:L-node-multi}).
The corresponding equations are given by
$$
    C_{{\cal L}-{\cal L}} = \frac{L^2}{2x^2} + \frac{L^2}{2}, \qquad
    C_{{\cal L}\rightarrow {\cal L}} = \frac{L^2}{x^2} + L^2.
$$
\begin{figure}[htb]
\centering
    \includegraphics[scale=1]{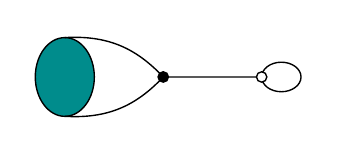}
    \qquad
    \includegraphics[scale=1]{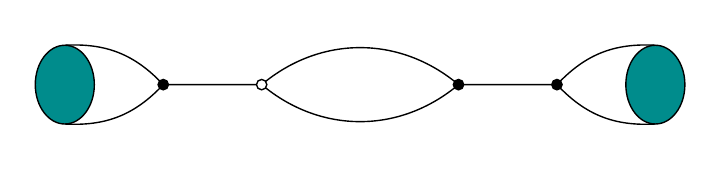}
    \caption{In white are the two new types of cut-vertices in a multigraph. That are respectively adjacent to a loop (left) and to a double edge (right).}
    \label{fig:L-node-multi}
\end{figure}

We then apply Theorem \ref{thm:species} and, after a straightforward calculation, obtain
\begin{equation}\label{eq:C-multi}
    % \def\arraystretch{2}
    % \begin{array}{ll}
    %     C(x) =
    %     & \frac{x^2}{2}\left( 1 + D + \frac{D^2}{2} + \frac{D^3}{6}\right) + \overline{M}(x,1+D) + \frac{L^3}{6x^2}\\
    %     & - \frac{1}{2}\left( \log(1 - D + S) + D-S + \frac{(D-S)^2}{2} + P(S + H) + HS + \frac{P^2 + H^2}{2} + \frac{L^2}{x^2} + L^2\right).
    % \end{array}
    \begin{split}
        C(x)
        & = \frac{x^2}{2}\left( 1 + D + \frac{D^2}{2} + \frac{D^3}{6}\right) + \overline{M}(x,1+D) + \frac{L^3}{6x^2}\\
        & - \frac{1}{2}\left( \log(1 - D + S) + D-S + \frac{(D-S)^2}{2} + P(S + H) + HS + \frac{P^2 + H^2}{2} + \frac{L^2}{x^2} + L^2\right).
    \end{split}
\end{equation}
The Puiseux expansion at $\rho_m$ is computed from that of $D(x)$ using the previous expression for $C(x)$ and is
of the form
$$
   C(x) =  C_0 + C_2X^2 + C_4X^4 + C_5X^5 + O(X^6).
$$
Since we do not have a singular expansion for $C^\bullet(x)$ that we can integrate as in the proof of Theorem \ref{thm:count-all}, we need to show directly  that $C_3=0$. Assume for contradiction that $C_3 \ne 0$.
Then, by  the Transfer Theorem, the ratio between the number of connected cubic planar multigraphs with $n$ vertices and the number of connected networks with $n$ vertices would tend to a  constant as $n$ goes to infinity. Let us  define a {\em bad edge} as either a double edge or a loop. For $n\geq 4$, a vertex of a connected cubic planar multigraph can be adjacent to at most one bad edge. Hence each  vertex is adjacent to at least one simple edge, hence there are at
least $n/2$ simple edges. Each time a simple edge of a connected cubic planar multigraph is distinguished and directed, we get a different connected network, hence the number of connected networks with $n$ vertices is at least $n/2$ times greater than the number of connected cubic planar multigraphs with $n$ vertices, which is a contradiction.

We proceed as in the last part of the proof of  Theorem \ref{thm:count-all}.
We compute
$$
    C_0 = C( \rho_m) \approx 0.070660 \text{ and } C_5 \approx -0.098979,
$$
together with the singular expansion of $G(x) = \exp( C(x))$, which is given by
$$
    G_0 + G_2X^2 + G_4X^4 + G_5X^5 + O(X^6),
$$
where
$$
    G_0 \approx 1.073217 \text{ and } G_5 \approx -0.106226.
$$
Finally, an application of Lemma \ref{th:transfer} gives the estimates as claimed.
As a corollary, the probability that a random cubic planar multigraph is connected is
$    p_m = h'/h\approx 0.931778$.
\qed

\paragraph{Remark.}  We provide here a short explanation for the similarity between equations \eqref{eq:rho} and \eqref{eq:rho-multi}. Let $p_1(x^2)$ be the polynomial in \eqref{eq:rho} and $p_2(x^2)$ that in \eqref{eq:rho-multi}. 
After making the change of variables $y=x^2$, $p_1(y)$ and $p_2(y)$ are obtained, by eliminating, respectively,  in the systems of equations
$$
\begin{array}{ll}
H_1=\textstyle\left(1+\frac{1}{2}T\left(y(1+D)^3\right)\right)^2-(1+D)^2\left(\frac{y^2}{4}+1-y(D-1)\right)=0, \qquad y(1+D)^3=\tau, \\
H_2=\left(1+\textstyle\frac{1}{2}T\left(y(1+D)^3\right)\right)^2-(1+D)^2\left(\frac{y^2}{4}+1-y(D+3)\right)=0, \qquad y(1+D)^3=\tau.
\end{array}$$
\medskip
Now rewrite $H_1$ and $H_2$ as 
$$
\begin{array}{ll}
H_1 =& \left(1+\frac{1}{2}T\left(y(1+D)^3\right)\right)^2-(1+D)^2\left(\frac{y^2}{4}+1\right)
-2y(1+D)^2-\tau, \\
H_2 =& \left(1+\frac{1}{2}T\left(y(1+D)^3\right)\right)^2-(1+D)^2\left(\frac{y^2}{4}+1\right)
+2y(1+D)^2-\tau.
\end{array}
$$
We deduce from here that $p_2(y)=p_1(-y)$, which is equivalent to the relation between \eqref{eq:rho} and \eqref{eq:rho-multi}.

\subsection{$P$-recursive sequences}

A series is $D$-finite if it satisfies a linear differential equation with polynomial coefficients. It is well-known (see Chapter 6 in  \cite{stanley2}) that $\{f_n\}$ is $P$-recursive if and only if
$\sum f_nx^n/n!$ is $D$-finite.

\paragraph{Proof of Theorem \ref{th:Dfinite}.} We show that in each case  the corresponding generating functions are $D$-finite.

\emph{Connected and 2-connected graphs}. The generating function $C'(x)$ is algebraic, hence it is $D$-finite \cite{stanley2}. It follows that $C(x)$ is also $D$-finite.
The same argument applies to the generating function $B(x)$ of 2-connected graphs.

\emph{Arbitrary graphs.} We use the same argument as in \cite{4-regular}, namely that if $C'(x)$ is algebraic then $\exp(C(x))$ is $D$-finite.
For completeness we briefly recall the proof. Let $G(x) = e^{C(x)}$.
One shows by induction  that $G^{(i)} = R_i(C',x) G(x)$, where
$R_i$ is a rational function in $C'$ and $x$.
Since $C'$ is algebraic, $\mathbb{Q}(C', x)$ is finite dimensional over $\mathbb{Q}(x)$, say of dimension~$k$.
Hence there are rational functions $S_i(x)$ such that
$
\sum_{i=0}^k  S_i(x) R_i(C',x) =0.
$
It follows that
$$
S_0(x)G + S_1(x) G' + \cdots + S_k(x) G^{(k)} = 0.
$$
proving  that $G$ is $D$-finite.

\emph{Multigraphs.}
In this case we cannot apply the previous argument, since there is no direct relation between the generating functions $D(x)$ and $C'(x)$. It follows from Equation \eqref{eq:D_multi} that $D(x)$ is algebraic. We  use Equation \eqref{eq:C-multi} to express $G(x) = \exp (C(x))$ in terms of $D(x)$, and the fact that the exponential of an algebraic function is $D$-finite, to obtain
$$
	G(x) = e^{C(x)} = J(x)  e^{ \overline{M}(x,1+D)},
$$
where $J(x)$ is a $D$-finite function (notice that the logarithm in \eqref{eq:C-multi} cancels with the exponential).
We next use the explicit expression \eqref{eq:MM} and the fact that $U(z)$ is algebraic to conclude that $\exp(\overline{M}(x,1+D))$ is $D$-finite (again a logarithm cancels). Since the product of $D$-finite functions is $D$-finite, we conclude that $G(x)$ is $D$-finite.
\qed

% % % % % % % % % % % % % % % % % % % % % %
\section{Proofs of  limit law results: triangles}
% % % % % % % % % % % % % % % % % % % % % %

In this section we obtain generating functions encoding triangles in cubic planar graphs and its distribution in  random cubic planar graphs. The main idea behind these proofs is that we are able to enrich the network decomposition of graphs in order to encode the number of triangles. More precisely, in order to study the distribution of the number of  triangles, we start with 3-connected cubic planar graphs.
By duality this amounts to studying vertices of degree 3 in triangulations.
The latter problem is solved in Section \ref{subsub: triang-triangul}.
We then use it to count triangles in networks in Section \ref{subsub:triang-net}.
In Section \ref{subsec:proofTriang} we perform the singularity analysis of the equations obtained in Section \ref{subsub:triang-net}, and complete the proof of Theorem \ref{thm:triangles}.
Finally, as a byproduct of the previous ideas, in Section \ref{subsec:triangle-free} we apply these tools  to enumerate planar cubic triangle-free graphs.
This does not follow directly from Theorem \ref{thm:triangles} as one needs to adapt the equations satisfied by the associated generating functions and perform a delicate analysis of singularities.

\subsection{Vertices of degree 3 in triangulations}\label{subsub: triang-triangul}
In this section we obtain the generating function of triangulations encoding the number of vertices of degree $3$.
This will be done by enriching the classical decomposition by Tutte  of triangulations in terms of 4-connected triangulations \cite{tutte}.

Throughout this section $\mathcal{T}^*$ denotes the class of triangulations not reduced to a triangle.
The associated generating function is $T^*(z) = T(z)-z$, where $T(z)$ is as in Equation \eqref{eq:Tu}.
Additionally $z^{-1}T^{*}(z)$ counts triangulations (not reduced to a triangle) in terms of \emph{internal} triangles.
Recall that $T_4(z)$ is the generating function of 4-connected triangulations, given in \eqref{eq:Tv}.
In both cases, $z$ encodes the number of vertices minus two.
A triangulation $A\in\mathcal{T}^*$ has a 4-connected core $C$, obtained by removing the vertices inside maximal separating triangles; the core is either a 4-connected triangulation or is isomorphic to $K_4$.
Then $A$ is obtained by possibly replacing the internal faces of $C$ with arbitrary triangulations.
This leads to the following equation, linking $T^*(z)$ and $T_4(z)$:
%From general theory (\underline{Tutte}), $T(z)$ is defined from $T_4(z)$ by the following equation:
%
\begin{equation}\label{eqT}
    T^*(z) = \frac{T_4\left( z\left( 1 + z^{-1}T^*(z) \right)^2\right)}{1+ z^{-1}T^*(z)} + z^2 (1 + z^{-1}T^*(z))^3.
\end{equation}
The first term in the right hand-side   is equivalent to Equation (2.6) from \cite{tutte};
the second one corresponds to the case when the core is $K_4$.
Note that here we want to replace with triangulations in $\mathcal{T}^*$ instead of $\mathcal{T}$, as replacing a face with a single triangle amounts to doing nothing.  This is already encoded by the term 1 in $1 + z^{-1}T^*(z)$.

Our goal is to refine \eqref{eqT} by counting vertices of degree 3.
An \emph{internal} vertex in a triangulation is a vertex not incident with the root face, otherwise it is called \emph{external}.
Let $t(z,u)$ be the generating function of triangulations, where $z$ is as before and $u$ encodes internal vertices of degree 3.
In particular, $T^*(z) = t(z,1)$.
Let now $\mathcal{T}_0$ be the set of triangulations (except $K_4$) in which  the degree of the root vertex is equal to $3$, and  $\mathcal{T}_1$  those where the degree  is greater than 3.
Then we have $\mathcal{T}^* = \mathcal{T}_0 \cup \mathcal{T}_1 \cup \{K_4\}$.
Let $T_0(z,u)$ and $T_1(z,u)$ be the associated generating functions, where $u$ now counts the \emph{total} number  of vertices of degree $3$, including the external ones.
Then we have
\begin{equation*}
    T^*(z,u) = T_0(z,u) + T_1(z,u) + z^2u^4.
\end{equation*}

In the next lemma we obtain expressions for both $T_0(z,u)$ and $T_1(z,u)$:

%The following lemma shows how to get $t(z,u)$ in terms of $T_4(z)$:
%
%we will use a technique developed in \cite{cubic_maps}, where the authors studied the case $u=0$, i.e. triangle-free 3-connected rooted cubic planar maps. This technique is based on the observation that any 3-connected triangular maps can be obtained by taking a 4-connected triangulation as a core and then paste (or not) triangular maps on its internal faces, as illustrated by Figure \ref{fig:K4_with_t3s}.
%

%
%In our case, as we want to keep track of the number of cubic vertices, we have to be careful of two things.
%First, when pasting a 3-connected triangular map on a (triangular) face, then any of its cubic vertex adjacent to the root face (might be non) will have a degree of at least 4.
%Second, the only 4-connected core that has its own cubic vertices is $K_4$.
%And when pasting 3-connected map on an internal face of a $K_4$, the cubic vertices of $K_4$ adjacent to this face will have a degree of at least 4, but the same vertices when another 3-connected map is pasted on another adjacent face will have already vanished as cubic vertices.
%In order to solve the first problem, we will consider the generating function of triangulations $t(z,u)$, where $u$ encodes only internal cubic vertices.
%And for the second problem, we will handle $K_4$, whose generating function is $z^2u^4$, separately from the generating function of 4-connected triangulation $T_4(z)$ with at least 5 vertices.
%Notice that no 4-connected core counted by the generating function $T_4(z)$ contains any cubic vertex.
%
\begin{lemma}\label{lem:triangular_maps}
The generating function $t = t(z,u)$ is defined implicitly in terms of $T_4(z)$ as
\begin{equation}\label{eq:t}
    t = \frac{T_4\Big( z\big( 1 + z^{-1}t \big)^2\Big)}{1+ z^{-1}t} + z^2\left( (1 + z^{-1}t)^3 + u-1 \right).
\end{equation}
In addition, we have
\begin{align}
    T_1(z,u)\,\, &=\,\, zu t, \label{eq:T1t3}      \\
        T_0(z,u)\,\, &=\,\,  (1+ 2zu - 3z)t  - z^2u. \label{eq:T0t3}
\end{align}
\end{lemma}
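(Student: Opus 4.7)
My plan for Lemma~\ref{lem:triangular_maps} is to refine the classical Tutte-style decomposition \eqref{eqT} of rooted triangulations through their $4$-connected core so as to track degree-$3$ vertices by a new variable $u$, and then to rewrite this refinement to separate the two subclasses $T_0$ and $T_1$. For \eqref{eq:t} I would start from \eqref{eqT} and mark internal degree-$3$ vertices by $u$. In the first summand, a triangulation with a $4$-connected core $C\not\cong K_4$ is built by possibly replacing each internal face of $C$ by a triangulation in $\mathcal{T}^*$; since $4$-connected triangulations have minimum degree $4$, no vertex of $C$ can become a degree-$3$ vertex in the full triangulation (inserting a triangulation in a face only increases the degrees of its boundary vertices), while every internal degree-$3$ vertex of the full triangulation is an internal vertex of some inserted piece and keeps its degree there. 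This summand is therefore refined by simply replacing $T^*$ by $t$. In the $K_4$-summand I must track the apex of $K_4$, which has degree $3$ in $K_4$ and keeps degree $3$ in the full triangulation if and only if no insertion is made in any of the three incident faces (any non-trivial insertion strictly increases the apex's degree). Splitting the $K_4$-summand accordingly, the apex contributes $u$ in the ``no insertion'' branch and $1$ otherwise, which produces the additional correction term $z^2(u-1)$ in \eqref{eq:t}.

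For \eqref{eq:T1t3}, I would establish a size-preserving bijection between $\mathcal{T}^*$ and the class of triangulations whose root vertex has degree $3$. Given $\tau'\in\mathcal{T}^*$ with root face $\{a,b,c\}$ and root edge $ab$, I insert a new vertex $r$ inside the root face joined to $a,b,c$ and designate a new root edge incident to $r$ so that $r$ becomes the new root vertex of degree $3$. The inverse removes $r$: for a triangulation $\tau\neq K_4$ with root vertex $r$ of degree $3$, the three neighbours of $r$ span a separating triangle (not a face, for otherwise $\tau$ would be $K_4$), so removing $r$ together with its three incident edges produces a triangulation in $\mathcal{T}^*$. Under this bijection $r$ is a new external vertex of degree $3$, contributing a factor $zu$; the original third vertex $c$ of the root face becomes internal but acquires degree $d_c+1\geq 4$, so the internal degree-$3$ vertices of $\tau$ are exactly those of $\tau'$, contributing the factor $t(z,u)$.

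For \eqref{eq:T0t3}, my plan is to compute $T^*(z,u)$, the generating function of $\mathcal{T}^*$ with $u$ marking all degree-$3$ vertices, in terms of $t$ and then use the identity $T_0=T^*(z,u)-T_1-z^2u^4$. Writing $i(\tau)$ and $e(\tau)$ for the numbers of internal and external degree-$3$ vertices of $\tau$, so that $t(z,u)=\sum_\tau z^{|\tau|}u^{i(\tau)}$ and $T^*(z,u)=\sum_\tau z^{|\tau|}u^{i(\tau)+e(\tau)}$, the key structural fact is that for every $\tau\in\mathcal{T}^*\setminus\{K_4\}$ at most one of the three external vertices of $\tau$ has degree $3$: if two external vertices $v_1,v_2$ had degree $3$, a local analysis of the triangles incident to them forces the third external vertex $v_3$ to have degree $3$ and $v_1,v_2,v_3$ to share a common neighbour $x$; the subgraph on $\{v_1,v_2,v_3,x\}$ is then $K_4$, and $3$-connectivity rules out further vertices, giving $\tau=K_4$. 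Hence $e(K_4)=3$ and $e(\tau)\in\{0,1\}$ otherwise. Three variants of the insertion bijection from the previous step (placing $r$ at the tail, head, or third-vertex position of the new root face) give disjoint bijections from $\mathcal{T}^*$ onto the three subclasses of $\{\tau\neq K_4:e(\tau)=1\}$ partitioned by the position of the unique external degree-$3$ vertex; summing yields $\sum_{\tau\neq K_4,\,e(\tau)=1}z^{|\tau|}u^{i(\tau)}=3z\,t(z,u)$. Combining, $T^*(z,u)=z^2u^4+3zu\,t+(t-3zt-z^2u)=(1+3z(u-1))\,t+z^2(u^4-u)$, and subtracting $T_1$ and the $K_4$-term gives \eqref{eq:T0t3}. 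The most delicate step will be the structural claim that $e(\tau)\leq 1$ for $\tau\neq K_4$; once this is in place, the rest is bookkeeping with the three insertion bijections.
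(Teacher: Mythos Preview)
Your derivations of \eqref{eq:t} and \eqref{eq:T1t3} are exactly the paper's: both use the $4$-connected core decomposition for $t$ (with the $K_4$ apex contributing the extra $z^2(u-1)$), and both use the ``remove/insert the degree-$3$ root vertex'' bijection between $\mathcal{T}^*$ and $\{\tau\neq K_4:\deg(\text{root vertex})=3\}$ for $T_1$.

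For \eqref{eq:T0t3} you and the paper agree on the overall plan---compute $T^*(z,u)$ with $u$ marking \emph{all} degree-$3$ vertices and then subtract $T_1$ and the $K_4$ term---but you compute $T^*(z,u)$ differently. The paper simply reruns the $4$-connected decomposition and, in the $K_4$ branch, splits according to how many of the three internal faces receive an insertion: with $0$ insertions all four $K_4$ vertices keep degree $3$ (term $z^2u^4$); with one insertion the three incident $K_4$ vertices gain degree while the opposite external vertex stays at degree $3$ (term $3uz\,t$); with two or three insertions no $K_4$ vertex has degree $3$. This gives $T^*(z,u)=t(1+3z(u-1))+z^2(u^4-u)$ directly from generating functions. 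You obtain the very same expression bijectively, via the structural fact $e(\tau)\le 1$ for $\tau\neq K_4$ together with three re-rooted variants of the insertion bijection. Both routes are valid; the paper's is shorter and purely computational, while yours explains combinatorially why the coefficient $3$ appears and why no higher powers of $u$ coming from external vertices can occur.

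One small imprecision in your structural claim: the step ``$3$-connectivity rules out further vertices'' is not the operative reason. Once you have shown that $v_1,v_2,v_3$ all have degree $3$ and share the common neighbour $x$, the local face structure around each $v_i$ is completely determined (the three inner faces are $v_1v_2x$, $v_1v_3x$, $v_2v_3x$), which already forces $\deg(x)=3$ and exhausts all faces of the map; hence $\tau=K_4$ by planarity alone. A separating triangle $\{v_i,v_j,x\}$ would not contradict $3$-connectivity, so that argument as stated does not close the gap---but your intended local analysis does.
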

%
%
%\begin{figure}[htb]
%\begin{center}
 %   \includegraphics[scale=1.2]{triangles_3-connected_cubic.pdf}
%\end{center}
%\caption{A core $K_4$ (center) in which two internal faces are replaced with  triangulations. The internal cubic vertices are painted white.}
%\label{fig:K4_with_t3s}
%\end{figure}
%
\begin{proof}
The first equation follows directly from \eqref{eqT}.
The only difference comes from the second term associated to $K_4$: when none of the internal faces is replaced with a triangulation, the central vertex has degree 3 and the configuration is encoded as~$u$.

When removing the root vertex (and the three adjacent edges) of a triangulation in $\mathcal{T}_1$, we obtain a smaller triangulation.
The reverse operation is to take a triangulation, draw a vertex on its root face, join it with the three vertices on the external face, and re-root the resulting map.
This gives \eqref{eq:T1t3}.

In order to obtain $T_0$ we first compute $T(z,u)$.
The following equation follows from \eqref{eq:t} by analyzing again the case where the core is $K_4$, and taking into account how many internal faces are replaced with triangulations:
\begin{equation*}
    T^*(z,u) = \frac{T_4 \left( z\big (1 + z^{-1}t \big )^2 \right )}{1+ z^{-1}t} + z^2\left( (1 + z^{-1}t)^3 -1 -3z^{-1}t + 3uz^{-1}t+ u^4 \right).
\end{equation*}
%Notice that, as $t(z,u)$ does not count the single triangle, substituting a face with a the single triangle is now encoded by the term 1 in $1 + z^{-1}t$.
Finally, we use $T^*(z,u) = T_0(z,u) + T_1(z,u) + z^2u^4$, and after a simple computation we get \eqref{eq:T0t3}.
\end{proof}

\subsection{Triangles in networks}\label{subsub:triang-net}

 We are now back to labeled graphs and exponential generating functions.
In this section the goal is to obtain equations for networks encoding also triangles.
Here, variable $x$ marks vertices and $u$ marks triangles.
$D_i(x,u)$ is the generating function of non-isthmus networks in which the root edge belongs to exactly $i \in\{0,1,2\}$ triangles.
(observe that in a cubic graph there is no other possibility).
The same convention applies to series, parallel and $h$-networks.
The special case when the 3-connected core of an $h$-network is $K_4$
%and the root edge belongs to exactly $i$ triangles
is encoded in the generating functions $W_i$.
We let $E(x,u)$ be the generating function of networks where triangles incident to the root edge are not counted, that is,
\begin{equation*}%\label{eq:triangles_E}
    E(x,u) = D_0 + u^{-1}D_1 + u^{-2}D_2.
\end{equation*}
% Finally, $I$ denotes the generating function of cubic planar graphs rooted at an isthums.
The next two lemmas provide the expressions for the series $D_i$, $S_i$, $P_i$, $W_i$, $I$, $L$ and for $H_0,\,H_1$ ($H_0,\,H_1$ will be treated separately as they are technically more involved).

\begin{lemma}\label{lem:networks-triangl}
The following equations hold:
\begin{equation*}%\label{eq:net-triang}
\renewcommand\arraystretch{1.5}
\begin{array}{lll}
    D_0 &=& S_0+P_0+W_0+L+H_0, \\
    D_1 &=& S_1+P_1+W_1+H_1,\\
    D_2 &=& P_2+W_2, \\
    I &=& \ds\frac{L^2}{x^2}, \\
    L &=& \frac{1}{2} x^2\left( I + E - L\right) + \frac{1}{2} x^2 (u-1)\left(x^2(E - L) + ux^2L + L^2) \right), \\
    P_0 &=& x^2(E - L)+\frac{1}{2}x^2(E - L)^2 , \\
    P_1 &=&  ux^2L(E-L)+u^2 x^2 L, \\
    P_2 &=& \frac{1}{2} u^2x^2L^2, \\
    S_0 &=& E \left( E - (S_0+u^{-1}S_1) \right) - u^{-1} S_1,\\
    S_1 &=& uL^3 + 2u x^2  L(E - L) + 2u^2 x^2L^2, \\
    W_0 &=& \frac{1}{2}x^4\left( 2(1+u)E^2 + 8E^3 + 5E^4 + E^5 \right), \\
    W_1 &=& \frac{1}{2} x^4\left(4u^2E + 6uE^2 + 2uE^3\right),\\
    W_2 &=& \frac{1}{2} x^4\left( u^4 + u^2E\right).
\end{array}
\end{equation*}
\end{lemma}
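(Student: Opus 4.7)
My plan is to enrich the network decomposition of Lemma~\ref{lem:networks} so that the variable $u$ records triangles. A key structural observation is that in a cubic graph any edge lies in at most two triangles---each endpoint has only two further neighbors, so there are at most two common neighbors---which justifies the finite stratification $D = D_0 + D_1 + D_2$ by the number of triangles through the root edge. Since neither a loop nor an isthmus can belong to any triangle, the splits $D_0 = S_0 + P_0 + W_0 + L + H_0$, $D_1 = S_1 + P_1 + W_1 + H_1$, $D_2 = P_2 + W_2$, together with $I = L^2/x^2$, follow directly from the five-fold classification of networks. The absence of an $S_2$ term reflects that a series composition has a unique middle vertex and hence at most one root-triangle, while a second root-triangle in a $3$-connected network forces the core to be $K_4$, which explains why those $D_2$-contributions are collected inside $W_2$ rather than in a hypothetical $H_2$.

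Next I would isolate the role of the auxiliary series $E = D_0 + u^{-1}D_1 + u^{-2}D_2$. When a sub-network $N$ is substituted for an edge $e$ of a host configuration, the triangles of $N$ through its own root are already weighted inside $N$'s $u$-marking, while the host independently creates $k$ additional triangles at $e$ through its own endpoints. Replacing $D$ by $E$ in the substitution strips off the root-triangles of $N$, and the host's $u^{k}$ factor then reintroduces them with the correct multiplicity. This is the combinatorial principle behind the systematic appearance of $E$ on the right-hand sides. With it in hand, the equations for $P_i$, $W_i$, and $S_i$ are obtained by direct enumeration: for $P_i$ one lists the pairs of sub-networks placed in parallel between the poles and reads off how many of them become loops creating root-triangles; for $W_i$ one enumerates substitutions of $E$-networks at the five non-root edges of $K_4$, recording which of the three root-triangles of $K_4$ survive each substitution pattern; for $S_i$ one takes the $E$-analogue of $S = D(D-S)$ to obtain $S_0$, while $S_1$ collects the configurations in which the middle vertex of a series composition forms a triangle through the root by gluing a loop-network to a suitable parallel- or $L$-configuration.

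The main obstacle will be the $L$ equation. The leading term $\frac{1}{2}x^2(I+E-L)$ is the direct analogue of the loop equation in Lemma~\ref{lem:networks}, with $E-L$ in place of $D-L$ so as to avoid double-counting triangles inherited from the substituted sub-network. The correction $\frac{1}{2}x^2(u-1)\bigl(x^2(E-L) + ux^2L + L^2\bigr)$ must account for triangles that are newly created at the non-loop neighbor $v$ of the loop vertex $s$, beyond what the leading term already captures. The plan is to enumerate the local configurations at $v$---its two remaining edges forming either a double edge to a single $E$-network root, a single edge together with a loop meeting a loop-network, or two loop-networks glued together---and to verify that these are exactly the three patterns producing such new triangles, each with the displayed $u$-weight. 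The prefactor $(u-1)$ reflects that these triangles are added on top of, rather than in place of, the leading count. Once the $L$ equation is verified the full system closes, and it feeds directly into the singular analysis of Section~\ref{subsec:proofTriang} that powers the proof of Theorem~\ref{thm:triangles}.
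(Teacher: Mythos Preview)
Your proposal is correct and follows essentially the same approach as the paper: both enrich the network decomposition of Lemma~\ref{lem:networks} with the variable $u$, use the auxiliary series $E$ to strip root-triangles from substituted sub-networks, and derive each equation by a case analysis of how triangles are created in loop, parallel, series, and $K_4$-core configurations. Your explanation of the role of $E$ and of why $S_2$ and $H_2$ vanish is in fact more explicit than the paper's, which relies largely on figures for the $L$, $P_i$, and $S_1$ cases; the three loop configurations you plan to enumerate (with weights $\tfrac12 ux^4(E-L)$, $\tfrac12 u^2x^4L$, $\tfrac12 ux^2L^2$) are exactly those illustrated in the paper.
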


\begin{proof}
%It is a case analysis.
Equations for $D_0,\, D_1$ and $D_2$ are clear, since $S_2=P_2=H_2=0$.
The equation for $I$ is the same as in the univariate case.
The equation for $L$ is obtained as follows: from the main term $\frac{x^2}{2}(I+E-L)$ we need to consider separately three situations in which a new triangle is created: they are illustrated  in Figure~\ref{fig:loop-triangle}.
The corresponding generating functions are $\frac{1}{2} ux^4 (E-L)$, $\frac{1}{2} u^2x^4L$ and $\frac{1}{2} ux^2L^2$, hence the term $\frac{x^2}{2}(u-1)\left(x^2 (E-L)+\frac{x^2}{2}uL+\frac{1}{2}L^2\right).$

\begin{figure}[htb]
    \centering
    \includegraphics[scale=1.5]{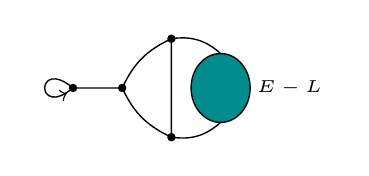} \hspace{0.05cm}
    \includegraphics[scale=1.5]{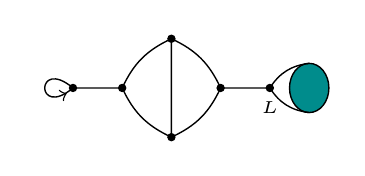} \hspace{0.05cm}
    \raisebox{-2ex}{\includegraphics[scale=1.5]{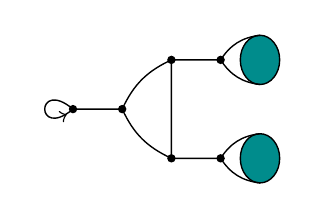}}
    \caption{The three configurations in $\mathcal{L}$ where an extra triangle is created. The associated generating functions are respectively $\frac{1}{2} x^4 u (E-L)$, $\frac{1}{2} x^4u^2 L$ and $\frac{1}{2} x^2 u L^2$.}
    \label{fig:loop-triangle}
\end{figure}

In the case of  parallel networks,  when using networks in $\mathcal{L}$  we create triangles incident with the root edge of the network.
The possible cases in $P_1$ and $P_2$ are illustrated  in Figure~\ref{fig:parallel}.

\begin{figure}[htb]
    \centering
    \includegraphics[scale=1.3]{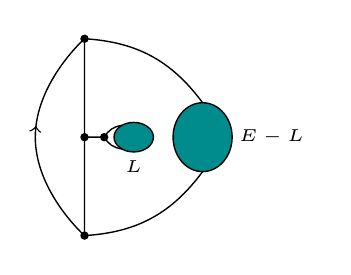} \hspace{0.05cm}     \includegraphics[scale=1.3]{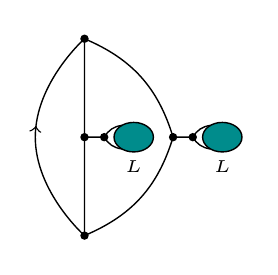}
    \caption{Contributions of $ux^2 L(E-L)$ to $P_1$ and $\frac{1}{2}x^2u^2L^2$ to $P_2$.}
    \label{fig:parallel}
\end{figure}

The equation for $S_1$ follows by considering the possible cases in which the root edge is incident with a triangles, as described  in Figure~\ref{fig:S1-triangle}.

\begin{figure}[htb]
    \centering
    \includegraphics[scale=1.3]{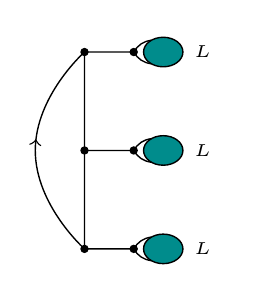} \hspace{0.05cm}     \includegraphics[scale=1.3]{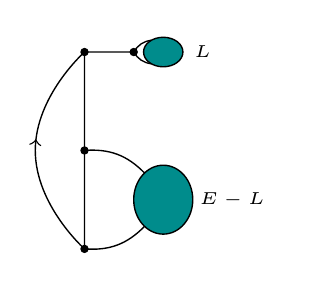} \hspace{0.05cm}      \includegraphics[scale=1.3]{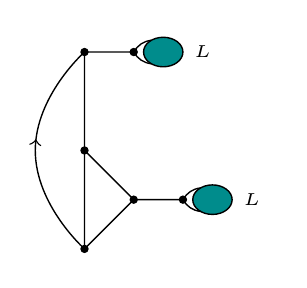}
    \caption{Contributions to $S_1$: the corresponding generating functions are $uL^3$,  $u x^2  L(E - L)$,  $u^2 x^2L^2$. For the second and third configuration there are two possibilities.}
    \label{fig:S1-triangle}
\end{figure}

The equation for $S_0$ is obtained as in the univariate case, by subtracting the term $u^{-1}S_1$.
Finally, the equations for $W_0, W_1$ and $W_2$ are obtained by considering all cases where $K_4$ is the core of the $h$-network. Observe that the different coefficients that appear in the expressions of $W_0$, $W_1$ and $W_2$ are due to symmetries of $K_4$.
\end{proof}

The previous system can be easily rewritten as we have done earlier (see the paragraph after the statement of Lemma 13) so that the right-hand terms have non-negative coefficients, and thus admits a unique non-negative power series as solution.
The following lemma gives the expression for $H_0$ and $H_1$ in terms of $T_0(z)$ and $T_1(z)$.
Joint with the previous lemma, this completes the system of equations encoding triangles:
\begin{lemma}\label{lem:eq-H0H1-triangl}
Let $t(x,u)$ be the generating function defined by Equation~\eqref{eq:t}.
Then $H_0$ and $H_1$ are given by the following expressions:
$$
    \renewcommand\arraystretch{2.5}
    \begin{array}{lll}
        H_1(x,u) &=& \ds\frac{1}{2} x^2 u\cdot t\left( x^2(1+E)^3, 1+\frac{u-1}{(1+E)^3} \right),
        \label{eq:H1} \\
        H_0(x,u) &=& \ds\frac{1}{2} t\left(x^2(1+E)^3,1+\frac{u-1}{(1+E)^3}\right)\frac{1-x^2(E-2u+3)}{1+E}- \frac{1}{2}x^4(1+E)^2((1+E)^3+u-1)).
        \label{eq:H0}
    \end{array}
$$
\end{lemma}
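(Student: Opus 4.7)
My plan is to derive these identities by combining planar duality with the triangulation decomposition of Lemma~\ref{lem:triangular_maps}, in analogy with the univariate identity $H = M(x,1+D)/(1+D)$ coming from Lemma~\ref{lem:networks}. Under duality each triangle in the core corresponds to a degree-$3$ vertex of the dual triangulation, and $M(x,y) = \tfrac{1}{2}(T(x^2y^3) - x^2y^3)$ transports h-networks to rooted triangulations. Before substituting I would establish the structural fact that in a non-$K_4$ 3-connected cubic planar graph no edge lies in two triangles: if both would be faces, the same analysis that previously forced $K_4$ applies; if at least one is a separating $3$-cycle, then after removing the two endpoints of the common edge the interior of that $3$-cycle gets disconnected from the rest, so $\{u,v\}$ is a $2$-cut, contradicting $3$-connectivity. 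This is both why the lemma does not need an $H_2$ term outside the $K_4$ case and why the survival events of distinct core triangles are independent once the edge-assignments are fixed.

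Next I would refine the univariate substitution in two stages. First, each non-root edge of the core should contribute the factor $1+E$ with $E = D_0 + u^{-1}D_1 + u^{-2}D_2$: when a d-network in the class $D_i$ replaces a core edge, its own root edge (and hence the $i$ triangles incident to it) is deleted by the replacement, so the $u^{-i}$ factors undo the now-spurious $u$-marking of those destroyed triangles. Using the edge-vertex identity $|E|=3(|V|-2)$ for triangulations, this is implemented by the substitution $z \mapsto x^2(1+E)^3$. Second, an internal triangular face of the core (equivalently an internal degree-$3$ vertex of the dual triangulation) survives in the h-network iff all three of its incident edges are kept, so the naive per-vertex contribution $u\cdot(1+E)^3$ must be replaced by $u\cdot 1 + ((1+E)^3-1)$; renormalising by $(1+E)^3$ yields the substitution $u \mapsto 1 + (u-1)/(1+E)^3$. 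These two substitutions together define the series $\tilde t = t(x^2(1+E)^3,\, 1 + (u-1)/(1+E)^3)$ that appears in the statement.

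Finally, I would use the decomposition $T^*(z,u) = T_0(z,u) + T_1(z,u) + z^2u^4$ from Lemma~\ref{lem:triangular_maps} to separate the behaviour at the endpoints of the root edge of the triangulation. The $z^2u^4$-term is the $K_4$-core contribution, already tracked by $W_0,W_1,W_2$ and hence to be subtracted; after the substitutions it produces precisely the correction $-\tfrac{1}{2}x^4(1+E)^2((1+E)^3+u-1)$ that appears explicitly in the stated formula for $H_0$. The piece $T_1=zut$ encodes cores whose root face is a triangle containing the root edge, and the bijection reversing the orientation of the root edge matches it with the ``head of the root edge has degree $3$'' portion of $T_0$; these two cases together account for $H_1$. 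The complementary part of $T_0$, where neither endpoint of the triangulation's root edge has degree $3$, produces the main piece of $H_0$. The main obstacle will be the non-uniform treatment of the three edges incident to the root vertex (respectively the head) of the triangulation in the $T_1$-case (respectively in the head-degree-$3$ part of $T_0$): the uniform substitution $z \mapsto x^2(1+E)^3$ assigns the factor $(1+E)^3$ to each such vertex, but only two of the three incident edges are non-root core edges that must be forced kept in order for the root-edge triangle to survive, the third being the h-network root edge (already accounted for by the global division by $(1+E)$ in $H = M/(1+E)$). Keeping careful track of this asymmetry, and verifying that the factor $(1+2zu-3z)$ appearing in $T_0$ simplifies after substitution to the rational expression $(1 - x^2(E-2u+3))/(1+E)$ in the stated formula for $H_0$, is the computational heart of the lemma.
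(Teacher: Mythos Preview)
Your proposal follows essentially the same route as the paper: duality, the two substitutions $z\mapsto x^2(1+E)^3$ and $u\mapsto 1+(u-1)/(1+E)^3$, separate handling of the $K_4$ core, and the formulas $T_1=zut$, $T_0=(1+2zu-3z)t-z^2u$ from Lemma~\ref{lem:triangular_maps}. Your computational check that $(1+2zu-3z)$ becomes $(1-x^2(E-2u+3))/(1+E)$ after substitution is exactly what is needed.

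There is one organisational point where the paper is cleaner and your outline has a gap. You split according to the \emph{map}-level data ``tail of the triangulation root edge has degree $3$'' ($T_1$) versus ``head has degree $3$'' (a portion of $T_0$), and then invoke a root-edge-reversal bijection. The paper instead works at the \emph{graph} level with the classes $\mathcal M_0,\mathcal M_1$ (core with zero or one triangle through the root edge) and sets $m_1=M_1/(uy^3)$ so that the two special edges and the external triangle are stripped from the edge/triangle marking before substitution. This matters for $H_0$: you say that ``the complementary part of $T_0$, where neither endpoint has degree $3$, produces the main piece of $H_0$'', but that misses the contribution of cores that \emph{do} have an external triangle whose special edges get replaced (destroying the triangle). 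In the paper this is the explicit term $(2E+E^2)\,m_1$ added to $M_0/(1+E)$; in your scheme it would have to come out of the $T_1$ and ``head-degree-$3$'' pieces once you relax the forced-kept condition, and as written your proposal routes those pieces entirely to $H_1$. The ``careful track of the asymmetry'' you flag at the end is precisely this bookkeeping, and it is simplest if you first pass to the graph-level dichotomy $\mathcal M_0/\mathcal M_1$ before substituting.
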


\begin{proof}
We say that a triangle in a network  is \emph{external} if it is incident with the root edge.
The edges of an external triangle that are not the root edge are called \emph{special}.

We denote by $\mathcal{M}_0$ and $\mathcal{M}_1$ the family of edge-rooted 3-connected cubic planar graphs (except $K_4)$ without external triangles and with one external triangle, respectively,
and let $M_0(x,y,u)$, $M_1(x,y,u)$ be the associated generating functions, where $x$, $y$ and $u$ mark vertices, edges and triangles, respectively.
Similarly to Equation~\eqref{eq:TM} we have that
$$
    M_0(x,y,u) = \frac{1}{2}T_0(x^2y^3,u),\qquad M_1(x,y,u)=\frac{1}{2}T_1(x^2y^3,u).
$$
Let $m_1(x,y,u)=M_1(x,y,u)/(uy^3)$, where now $u$ counts non-external triangles, and $y$ counts the number of edges minus three (we do not count the root edge and the special edges).

A network in $\mathcal{H}_1$
%Any $h$-network such that the root edge is incident with a triangle
is obtained from a graph  $G$ in $\mathcal{M}_1$ in which we replace edges (except the root edge) with networks, and where the three edges of the external triangle of $G$ are not replaced (recall that the external triangle is the only triangle incident with the root edge).
Observe that triangles in $G$ are isolated (because $G$ is $3$-connected), hence there are no triangles sharing edges and the previous replacement can be made.
%Hence, if $G$ has $2n$ vertices, $3n$ edges and $k$ triangles, then the encoding of $G$ in the generating function $m_1(x,y,u)$ is $\frac{1}{(2n)!}x^{2n}y^{3n-3}u^{k-1}$.
%Networks counted in $H_1$ with $G$ as a core are counted by the term
%
%$$u \frac{1}{(2n)!}x^{2n}(1+E)^{3n-3-3(k-1)}(u+3E+3E^2+E^3)^{k-1},$$
%which is equal to
%
%$$ u\frac{1}{(2n)!}x^{2n}(1+E)^{3n-3}\left(\frac{(1+E)^3-1+u}{(1+E)^3}\right)^{k-1}.$$
%
In particular, the term $u+3E+3E^2+E^3=(1+E)^3+u-1$ encodes the substitution of networks on 3-sets of edges defining triangles (except the external triangle and the corresponding edges, which are not substituted). %a triangle is preserved if and only if there is not substitution of the three edges.
This translates into the equation
$$
    H_1(x,u)=u \cdot m_1\left(x,1+E, 1+\frac{u-1}{(1+E)^3}\right).
$$
The expression for $H_1$
 is obtained by writing first $m_1$ in terms of $T_1$, and then  writing $T_1$ in terms of $t$ using~\eqref{eq:T1t3}.

Let us now consider a network in $\mathcal{H}_0$.
It can be obtained in two different ways: either from  a core without an external triangle, or from a core with an external triangle in which  some special edges are replaced with a non-empty network.
Using a similar encoding argument as before we arrive at
\begin{equation*}
    H_0(x,u) = \frac{M_0\left(x,1+E,1+\ds\frac{u-1}{(1+E)^3}\right)}{1+E}+ (2E+E^2)\cdot m_1\left(x,1+E,1+\frac{u-1}{(1+E)^3}\right),
\end{equation*}
where the factor  $2E+E^2$ in the second summand corresponds to the substitution of networks on the pair of special edges.
Using the expressions of $M_0$ and $m_1$ in terms of $T_0$ and $T_1$, and  Equations \eqref{eq:T1t3} and \eqref{eq:T0t3}, after simplification  we get the expression for $H_0$, as claimed.
\end{proof}

We conclude this section by expressing the generating function of vertex-rooted graphs  $C^{\bullet} (x,u)$, where $x$ marks vertices and $u$ marks triangles, in terms of networks:
\begin{equation}\label{eq:rooted-conn-triang}
	3C^{\bullet} (x,u)=D_0+D_1+D_2 + I - L-  x^2 (D_0+D_1+D_2) - L^2.
\end{equation}
This equation is obtained by considering all networks (which is counted by $D_0+D_1+D_2 + I$) and removing those where the root edge is either a loop or a multiple edge (term $L + x^2 (D_0+D_1+D_2)+ L^2$ ). This difference is equal to the generating function for networks with only simple edges, which by double counting it is equal to $3C^{\bullet}(x,u)$.

%from which we can derive by integration the formal power series for connected cubic planar graphs ($C(x,u)$). Finally, the generating function for general cubic planar graphs encoding vertices and triangles ($G(x,u)$) is $\exp(C(x,u))$.

\subsection{Singularity analysis and proof of the main result}\label{subsec:proofTriang}

After obtaining the system of equations in Lemmas \ref{lem:networks-triangl}, \ref{lem:eq-H0H1-triangl} and Equation~\eqref{eq:rooted-conn-triang} we proceed to analyze it.
In order to apply Lemma \ref{th:quasi-powers} for proving  a Gaussian limit law, our first task is to find the dominant singularities (of $x$ as a function of $u$, for $u$ close to $1$) of the function $C(x,u)$  counting triangles in connected cubic planar graphs.
We start finding the singularities for 3-connected graphs. Later, we use the results from the previous section to obtain the singularities of $C(x,u)$.

\paragraph{Singularities of 3-connected graphs.}
Recall that the generating function $t(z,u)$ encodes triangulations, where $z$ is the number of vertices minus 2 $u$ encodes internal vertices of degree 3. Its expression in terms of $T_4(z)$ is given in Lemma \ref{lem:triangular_maps}.
The next result gives the dominant singularities of
the generating function of 3-connected graphs:

\begin{lemma} \label{lem:singularities-t}
	Let $t(z,u)$ be as in Lemma \ref{lem:triangular_maps}.
	Let $u$ be a fixed complex number with $|u-1|<\varepsilon$, where $\varepsilon>0$ is sufficiently small.
	Then the point $z_0 = z_0(u)$ where $t(z,u)$ ceases to be analytic is the solution of the following equation:
\begin{equation}\label{eq:triangle_sing_curve}
	z_0(1 +(u - 1)z_0)^2 = \frac{27}{256}.
\end{equation}
Moreover, at the critical point $(z_0,u)$ we have the relation:
\begin{equation}\label{eq:triangle_t3}
    (z_0(u - 1) + 1)t(z_0,u) = \frac{1}{8} - z_0(1 + (u - 1)z_0).
\end{equation}
\end{lemma}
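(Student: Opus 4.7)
The plan is to exploit that the composition $T_4(zr^2)$, where $r := 1 + z^{-1}t$, is responsible for the singularity of $t(z,u)$. At $u=1$ we already know that $t(z,1) = T^*(z)$ has its dominant singularity at $\tau = 27/256$, where $zr^2 = (27/256)(32/27)^2 = 4/27 = \varsigma$, precisely the square-root singularity of $T_4$. By analytic perturbation in $u$, for any $u$ in a sufficiently small complex neighborhood of $1$, the dominant singularity $z_0(u)$ still arises from the condition $W := z_0\, r^2 = 4/27$, so I would take this as the starting point.

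Next I would couple this critical condition with the functional equation \eqref{eq:t}. Multiplying \eqref{eq:t} by $r$ gives $tr = T_4(W) + W^2 + z^2 r(u-1)$. Substituting the explicit values $t = z(r-1) = 4(r-1)/(27 r^2)$, $T_4(4/27) = 7/5832$, $W^2 = 16/729$, and $z^2 r = W^2/r^3 = 16/(729 r^3)$, then clearing the common denominator $5832\, r^3$, the identity collapses to the single cubic relation
$$729\, r^3 - 864\, r^2 - 128(u-1) = 0. \qquad (\star)$$
The branch of interest is the one satisfying $r \to 32/27$ as $u \to 1$, which picks out $z_0(u)$ continuously from the known singularity at $u=1$.

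The final step is purely algebraic translation from $r$ back to $z_0$. From $(\star)$ I would deduce $27 r^2 + 4(u-1) = 729\, r^3/32$, whence
$$1 + (u-1)z_0 \;=\; 1 + \frac{4(u-1)}{27 r^2} \;=\; \frac{729\, r^3/32}{27 r^2} \;=\; \frac{27 r}{32}.$$
Squaring and multiplying by $z_0 = 4/(27 r^2)$ then gives $z_0(1+(u-1)z_0)^2 = 27/256$, which is \eqref{eq:triangle_sing_curve}. For \eqref{eq:triangle_t3}, I would compute
$$(1+(u-1)z_0)\, t(z_0,u) \;=\; \frac{27 r}{32} \cdot \frac{4(r-1)}{27 r^2} \;=\; \frac{r-1}{8 r},$$
and observe that the claimed right-hand side $\tfrac{1}{8} - z_0(1+(u-1)z_0) = \tfrac{1}{8} - \tfrac{1}{8r} = \tfrac{r-1}{8r}$ agrees.

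The main obstacle is the first step: justifying rigorously that for $u$ near $1$ the dominant singularity still comes from the argument of $T_4$ reaching $\varsigma$, as opposed to a branch point intrinsic to the polynomial portion of the defining relation, or a competing smaller root of $(\star)$. This I would handle by a continuity argument anchored at $u = 1$ (where the situation reduces to the known univariate analysis of $T^*$), invoking the implicit function theorem on the polynomial part of the defining equation away from the locus $W = \varsigma$, and using the Puiseux expansion of $T_4$ at $\varsigma$ to transfer a genuine square-root singularity to $t$ at $z_0(u)$.
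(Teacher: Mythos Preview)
Your proposal is correct and follows essentially the same approach as the paper: both argue that for $u$ near $1$ the singularity of $t(z,u)$ is inherited from the square-root singularity of $T_4$ at $\varsigma=4/27$, yielding $z_0(1+z_0^{-1}t)^2=4/27$, and then combine this with the functional equation \eqref{eq:t} evaluated with $T_4(4/27)=7/5832$ to eliminate and obtain \eqref{eq:triangle_sing_curve} and \eqref{eq:triangle_t3}. The paper simply says ``eliminating $t(z_0,u)$ from the previous two equations'' and ``an elementary computation'', whereas you make that elimination explicit by passing through the auxiliary variable $r=1+z^{-1}t$ and the cubic $(\star)$; this is a presentational difference, not a methodological one.
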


\begin{proof}
The unique singularity of $T_4(z)$ is  at $4/27$ (see Section 2).
Hence, for $u$ in a small neighborhood of $1$, the only possible source of singularities for $t(z,u)$ in Equation \eqref{eq:t} comes from the singularity of $T_4(z)$, giving the relation
%\begin{equation}\label{eq:sing_t-1}
\begin{equation*}
	z_0(1 + z_0^{-1}t(z_0,u))^2 = 4/27.
\end{equation*}
We also know that $T_4(4/27) = 7/5832$, hence at the singular point we have
%\begin{equation}\label{eq:sing_t-2}
\begin{equation*}
	t(z_0,u) = \frac{7/5832}{1 + z_0^{-1}t(z_0,u)} + z_0^2((1 + z_0^{-1}t(z_0,u))^3 + u - 1).
\end{equation*}
Eliminating $t(z_0,u)$ from the previous two equations gives \eqref{eq:triangle_sing_curve}, and an elementary computation gives Equation  \eqref{eq:triangle_t3}.
\end{proof}

\paragraph{Singuarities of connected graphs.}

We have seen in the proof of Theorem \ref{thm:count-connected} that the singularities of the generating function $D(x)$ of cubic networks come from the singularities of $T(z)$.
Variable $u$ marks triangles, which is a linear parameter. Hence, by continuity and for $u$ sufficiently close to 1, this also holds for the bivariate generating functions of networks.
For a given $u$ close to 1, we let $\rho(u)$ be the dominant singularity of the function $E(x,u)$.
Notice that, because of \eqref{eq:rooted-conn-triang}, it is also that of $C(x,u)$; there is no cancellation because there is none for $u=1$. 
Remark also that $\rho(1) $ is equal to the constant $\rho\approx 0.3192246062$ from Theorem \ref{thm:count-connected}.

In order to  determine $\rho(u)$, we  find two equations satisfied by $u$, $\rho(u)$ and $E(\rho(u),u)$.
Then eliminating $E$ will give us $\rho(u)$ implicitly in terms of $u$.
Once we have access to $\rho(u)$, an application of Lemma \ref{th:quasi-powers} will give the asymptotic normal law with the corresponding moments.

\begin{lemma}\label{lem:eqns-sing-triangles}
	For  fixed $u$ close to 1, $E(x,u)$ admits two dominant singularities given by the two curves $\pm\rho(u)$ and such that $\rho(1) = \rho$.
	As $x\to \rho(u)^-$, we have locally
	\begin{equation*}
		E(x,u) = E_0(u) + E_2(u)\left(1 - \frac{x}{\rho(u)}\right) + E_3(u)\left(1 - \frac{x}{\rho(u)}\right)^{3/2} + \ldots,
	\end{equation*}
	where $E_0(u)$, $E_2(u)$ and $E_3(u)$ are analytic functions.

    Let  $x = \rho(u)$ be the {positive} dominant singularity of $E(x,u)$
    and let $E = E(x,u)$.
    Then the following two equations hold:
    \begin{align}
         x^2(1+E)^3 \left(1+(u-1)x^2\right)^2\,\,\, &=\,\, \frac{27}{256}, \label{eq:first-rho} \\
         256(1+(u-1)x^2)^2(1+E)A\,\,\, &=\,\, 256x^2(1+(u-1)x^2)^3(E^3+3E^2+3E) + B, \label{eq:second-rho}
    \end{align}
    % \begin{equation}\label{eq:second-rho}
    %     % \def\arraystretch{1.5}
    %     % \begin{array}{ll}
    %         256(1+(u-1)x^2)^2(1+E)A = 256x^2(1+(u-1)x^2)^3(E^3+3E^2+3E) + B,
    %     % \end{array}
    % \end{equation}
	where
	\begin{equation*}
    	\begin{array}{lll}
        	A &=&\left( (u^2x^4 - 2ux^4 + x^4 - x^2 - 2)^2-4x^2(1+(u-1)x^2)^2E \right)^{1/2}, \\
         	B &=&  256(u-1)^3x^8 + 768(u-1)^2x^6 +192(u-1)(3u+1) x^4 +(1066u-810)x^2+517.
    	\end{array}
	\end{equation*}
\end{lemma}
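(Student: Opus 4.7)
The plan is to track the singular behavior of $E(x,u)$ through its dependence on the triangulation series $t$, as in the proof of Theorem \ref{thm:count-connected}, and then perturb in $u$ around $1$. The key feature is that all singularities come from $t$ becoming singular, not from a branch point of the network system itself, a fact that is true at $u=1$ and then propagates by continuity.

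First, I would combine the equations of Lemma \ref{lem:networks-triangl} and Lemma \ref{lem:eq-H0H1-triangl} to get a single polynomial relation among $x$, $u$, $E$, $L$, and $t\!\left(x^2(1+E)^3,\, 1+(u-1)/(1+E)^3\right)$. Starting from $E = D_0 + u^{-1}D_1 + u^{-2}D_2$, summing the right-hand sides in Lemma \ref{lem:networks-triangl}, and using the identity $S_0 + u^{-1}S_1 = E^2/(1+E)$ that is immediate from the $S_0$-equation, everything collapses into such a relation. Next, $L$ can be eliminated: its defining equation, after substituting $I=L^2/x^2$, is a quadratic $a L^2 - (2 + x^2 - (a-1)^2)L + a x^2 E = 0$ with $a = 1+(u-1)x^2$, whose combinatorially correct (minus-sign) root produces precisely the radical $A(x,u,E)$ appearing in the statement. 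This plays the analogous role to $\sqrt{x^4/4+1-x^2(D-1)}$ in Theorem \ref{thm:count-connected}.

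To locate the dominant singularity I would use Lemma \ref{lem:singularities-t}. The arguments of $t$ are $z = x^2(1+E)^3$ and $u' = 1 + (u-1)/(1+E)^3$, for which $(u'-1)z = (u-1)x^2$. Inserting this into the singular curve $z_0(1+(u'-1)z_0)^2 = 27/256$ immediately yields \eqref{eq:first-rho}. Evenness of all generating functions in $x$ gives the paired singularity at $-\rho(u)$. Equation \eqref{eq:second-rho} is then obtained by evaluating the polynomial relation from the previous paragraph at $x=\rho(u)$ and using the critical-point identity \eqref{eq:triangle_t3} to replace $t(z_0,u')$ by a rational expression in $x$, $u$, $E$; the surviving $A$ is the residue of the $L$-elimination. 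For the square-root expansion, one argues as in Lemma \ref{lem:technical}: by continuity from the univariate case (Theorem \ref{thm:count-connected}), for $u$ close to $1$ the relevant partial derivative of the eliminated polynomial with respect to $E$ does not vanish at $(\rho(u),E_0(u))$, so the analytic implicit function theorem applies away from the $t$-singularity, and the $\sqrt{\,\cdot\,}$ singularity of $t$ at $z_0$ transports directly to a $\sqrt{1-x/\rho(u)}$ singularity of $E$, with analytic coefficients $E_0(u), E_2(u), E_3(u)$.

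The main obstacle I expect is the purely algebraic bookkeeping: summing the network contributions (especially the $W_i$ terms from the $K_4$-cores and the $H_0, u^{-1}H_1$ terms from Lemma \ref{lem:eq-H0H1-triangl}), substituting the closed form of $L$ together with the critical $t$-value, and simplifying to the specific shape given in \eqref{eq:second-rho}. A secondary subtlety is ensuring that $\rho(u)$ is genuinely the dominant singularity of $E(\cdot, u)$ and not overtaken by a branch point of the system; this is handled by perturbation from $u=1$, where Theorem \ref{thm:count-connected} has already shown that the only obstruction on $|x|=\rho$ comes from $t$, and both the condition $H_E \neq 0$ and the analyticity of $E$ inside $|x|<\rho(u)$ persist for $u$ in a small enough neighborhood of $1$.
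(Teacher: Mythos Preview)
Your proposal is correct and follows essentially the same route as the paper: reduce the network system to a single relation in $x,u,E,L,t$, eliminate $L$ via the quadratic (producing the radical $A$), read off \eqref{eq:first-rho} from the singular curve of $t$ via the substitution $(z_1,u_1)=(x^2(1+E)^3,\,1+(u-1)/(1+E)^3)$ together with the key identity $(u_1-1)z_1=(u-1)x^2$, and obtain \eqref{eq:second-rho} by plugging in the critical value of $t$ from \eqref{eq:triangle_t3}. The paper justifies the $3/2$-type expansion by invoking a variant of \cite[Theorem~2.31]{drmota} and a Newton-polygon check on the minimal polynomial of $t(z,u)$, but this is exactly the ``partial derivative in $E$ nonzero at $u=1$, then perturb'' argument you outline.
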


\begin{proof}
	Similarly to the univariate case, we show that the only source of singularities for $E = E(x,u)$ comes from $t = t(x^2(1+E)^3, 1 + (u-1)/(1+E)^3)$. 
	%, when $u=1$ and $x^2(1+E^3) = \tau$.
	Furthermore, the singular behaviour of $t$  transfers directly to that of $E$.
	%, via an implicit polynomial equation $P(t,E,x,u)=0$. % relating the two together.
	In our case, both statements can be  deduced directly from a slightly modified version of \cite[Theorem 2.31]{drmota}, in which we now require that $|P_E(t(\tau), E(\rho), \rho, 1)| \neq 0$ when $u=1$, and that $t(z,u)$ admits a 3/2 singular behaviour locally around $u=1$ and $z = \tau(u)$, where the $\tau(u)$ is the solution of $z_0$ in \eqref{eq:triangle_sing_curve}.
	By elimination from the equations in Lemmas \ref{lem:networks-triangl} and \ref{lem:eq-H0H1-triangl}, we obtain a polynomial equation $P(t,E,x,u)=0$, which has degree 6 in $E$ (it is  too big to be displayed here).
	From there, we check that $|P_E(t(\tau), E(\rho), \rho, 1)|\approx 7.1818705965$.
	For the second condition, we eliminate $V(z)$ and $T_4(z)$ from \eqref{eq:Tv}, \eqref{eq:t} and $z = V(z)(1-V(z))^2$ to obtain an irreducible polynomial equation $Q(t(z,u),z,u)=0$. 
	%where $Q\in \mathbb{Z}[a,b,c]$ turns out to be irreducible.
	%Notice that the curve $\tau(u)$ can be also computed as a factor of the discriminant of $Q$ with respect to $a = t(z,u)$.
	Using Newton's polygon algorithm on $Q$ (as it is square-free), we  compute the Puiseux expansion of $t(z,u)$ locally around $z=\tau(u)$, which is of the form:
	\begin{equation*}
		t(z,u) = t_0(u) + t_2(u)\left(1 - \frac{z}{\tau(u)}\right) + t_3(u)\left(1 - \frac{z}{\tau(u)}\right)^{3/2} + \ldots,
	\end{equation*}
	where $t_0(u)$, $t_2(u)$ and $t_3(u)$ are analytic functions.

	% For $u=1$, we know that the singularities come from those of $T(z)$ and not from a branch point when solving the systems in Lemmas \ref{lem:networks-triangl} and \ref{lem:eq-H0H1-triangl}, hence by continuity the same is true for $u$ close to 1. 
	Let us finally consider the expressions for $H_0$ and $H_1$ in Lemma \ref{lem:eq-H0H1-triangl}.
	Since the singularities of $E$ must come from the substitution in $t(z,u)$, the point $(z_1,u_1) = (x^2(1 + E)^3, 1 + (u-1)/(1 + E)^3)$ must be a singular point of $t(z,u)$.
	The singularities of $t(z,u)$ are given by the relation \eqref{eq:triangle_sing_curve}, hence we have:
	\begin{equation}
    	z_1\left(1 + (u_1 - 1)z_1\right)^2 = x^2(1 + E)^3 \left(1 + \left(1 + \frac{(u - 1)}{(1 + E)^3} - 1\right)x^2(1 + E)^3\right)^2 = \frac{27}{256},
	\end{equation}
	which is precisely \eqref{eq:first-rho}.
	Let us now deduce Equation \eqref{eq:second-rho}.
	We first need the evaluation of $t(z,u)$ at the point $(z_1,u_1)$.
	This follows directly from \eqref{eq:triangle_t3} and \eqref{eq:first-rho} and gives:
	\begin{equation*}
    	t(z_1,u_1) =  \frac{32(u - 1)x^2 + 5}{256(1 + (u - 1)x^2)^2}.
	\end{equation*}
	Notice that all the functions, involved in both Lemmas \ref{lem:networks-triangl} and \ref{lem:eq-H0H1-triangl}, can be written in terms of $E, L$ and the variables $x$ and $u$.
	Solving for $L$ and substituting  provides a second equation on $E$, $x$, and $u$.
	The solution for $L$ is given by:
	\begin{equation}\label{eq:L-triang}
    	L(x,u) = \frac{x^2 + 2   - (u - 1)^2x^4-A}{2(1+ (u - 1)x^2)},
	\end{equation}
	where $A$ is an in the statement.
	It remains finally to write $D_0$, $D_1$, $D_2$ in terms of $E$, $L$, $x$ and $u$, then to replace $L$ with the expression in \eqref{eq:L-triang}, and to perform an elementary computation to obtain \eqref{eq:second-rho}.
\end{proof}

\paragraph{Proof of Theorem \ref{thm:triangles}.}

%Notice now that by continuity of the function in Equation \eqref{eq:triangles_E}, if the variable $u$ remains in a neighbourhood of 1, then the only source of singularity for $E$ comes from the composition scheme involving the function $t_3(x,u)$. Which gives us the following singularity condition:
One can eliminate $E$ from the system composed of Equations \eqref{eq:first-rho} and \eqref{eq:second-rho} to obtain a single polynomial equation $p(x,u)=0$ in $x$ and $u$, whose smallest positive solution in $x$ is the singularity $\rho(u)$ of $E(x,u)$.
The polynomial $p$ has degree 40 in $x^2$ and is too large to be displayed here.
We then  differentiate $p(\rho(u),u)$ with respect to $u$ and compute the following values (using \textsc{Maple}):
$$
    \rho'(1) = -0.0389371919, \qquad \rho''(1) = 0.0229417852.
$$
Alternatively, we can differentiate \eqref{eq:first-rho} and \eqref{eq:second-rho} and solve the corresponding system involving $\rho(1)$ and $\rho'(1)$, and similarly for $\rho''(1)$.

In order to apply  Lemma \ref{th:quasi-powers}, we need to show that $E(x,u)$ is analytic in a $\Delta$-domain at $x=\rho(u)$.
%Since $E(x,1)=D(x)$ and $D(x)$ has an expansion in powers of $\sqrt{1-x/\rho}$, by continuity $E(x,u)$ also has an expansion in powers of $\sqrt{1-x/\rho(u)}$ for $u$ near 1.
By Lemma \ref{lem:eqns-sing-triangles}, $E(x,u)$ has an expansion in powers of $\sqrt{1-x/\rho(u)}$ for $u$ near 1. 
It is hence analytic in a sufficiently small neighborhood of $\rho(u)$ sliced along the ray $[\rho(u),\infty]$.
Consider now $u$ in a small neighborhood $U$ of 1, and take $u_0 \in U$ real with $\rho(u_0)>|\rho(u)|$.
By the same argument as in the proof of the univariate case (Theorem \ref{thm:count-connected}), $E(x,u)$ is analytic in a $\Delta$-domain at $u_0$.
It follows that $E(x,u)$ is analytic in a $\Delta$-domain at $\rho(u)$.
Thus, for $u$ in a small neighborhood of $1$, we get the estimate
$$
    [x^n]E(x,u) = c(u)\cdot n^{-5/2} \rho(u)^{-n} \left(1 + O(n^{-1})\right).
$$
By a direct application of Lemma \ref{th:quasi-powers}, we are able to first compute the values $\rho'(1)$ and $\rho''(1)$, then the values of $\mu$ and $\lambda$, as claimed.
This concludes the proof of Theorem \ref{thm:triangles}.
\qed

% % % % % % % % % % % % % % % % % % % % % %
\subsection{Enumeration of triangle-free cubic planar graphs}\label{subsec:triangle-free}
% Proof of Theorem \ref{thm:count-triangle-free}}
% % % % % % % % % % % % % % % % % % % % % %

In this section we enumerate cubic planar graphs without triangles.
The starting point is the enumeration of triangles from the previous section (more precisely, Lemmas \ref{lem:networks-triangl} and \ref{lem:eq-H0H1-triangl}).
We consider cubic planar networks without triangles, except possibly the ones incident with the root edge: when removing the root edge of the network, the resulting graph becomes triangle-free. In particular, we need to encode such networks where the only triangles are incident with the root edge (and becoming triangle-free when replaced at an edge).

We use the same notation as in the previous section, with the difference that now we do not encode triangles.
The following lemma gives  the relations between the various classes of  networks in this setting.

\begin{lemma} \label{lem:networks-free}
Let $t(x,u)$ be the generating function defined by Equation~\eqref{eq:t}, and let $E$, $L$, $I$, $D_i$, $S_i$, $P_i$, $J_i$, $W_i$ be the generating functions of networks without triangles except those containing the root.
Then
\begin{equation}\label{eq:networks-free}
    \renewcommand\arraystretch{1.6}
    \begin{array}{lll}
        E &=& D_0 + D_1+D_2,\\
        D_0 &=& S_0+P_0+W_0+L+H_0, \\
        D_1 &=& S_1+P_1+W_1+H_1,\\
        D_2 &=& P_2+W_2, \\
        I &=& \frac{L^2}{x^2}, \\
        L &=& \frac{1}{2} x^2\left( I + E - L\right) - \ds\frac{1}{2} x^2 \left(x^2(E - L)  + L^2) \right), \\
        P_0 &=& x^2(E - L)+\frac{1}{2}x^2(E - L)^2,\\
        P_1 &=& x^2L(E - L),\\
        P_2 &=& \frac{1}{2} x^2L^2\\
        S_0 &=& E\cdot \Big( E - (S_0+S_1) \Big) - S_1,\\
        S_1 &=& L^3 + 2x^2(E - L)L \\
        W_0 &=& \frac{1}{2}x^4\left(2E^2 + 8E^3 + 5E^4 + E^5\right),\\
        W_1 &=& \frac{1}{2}x^4\left(6E^2 + 2E^3\right),\\
        W_2 &=& \frac{1}{2}x^4E,\\
        H_1 &=& \frac{1}{2}x^2 \cdot t\left( x^2(1+E)^3, 1-\frac{1}{(1+E)^3} \right),\\
        H_0 &=& \frac{1}{2}t\left(x^2(1+E)^3,1-\frac{1}{(1+E)^3}\right)\frac{1-x^2(E+3)}{1+E} - \frac{1}{2}x^4(1+E)^2((1+E)^3-1)).
    \end{array}
\end{equation}
\end{lemma}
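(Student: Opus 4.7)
The plan is to obtain \eqref{eq:networks-free} by specializing the bivariate system of Lemmas \ref{lem:networks-triangl}--\ref{lem:eq-H0H1-triangl}. In those lemmas $u$ marks every triangle, while here the only triangles allowed are those incident with the root edge. Since the classes $D_i, S_i, P_i, W_i, H_i$ already encode the number $i$ of root-incident triangles, the appropriate specialization is to factor out the $u^i$ coming from the root triangles and then set $u=0$ to annihilate every remaining (non-root) triangle. Concretely, one checks by inspection of Lemma \ref{lem:networks-triangl} that each of $S_1, P_1, P_2, W_1, W_2, H_1$ is divisible by $u^i$ with $i$ its subscript; writing $F_i = u^i \widetilde F_i(x,u)$ and evaluating $\widetilde F_i(x,0)$ reproduces the right-hand sides of \eqref{eq:networks-free} line by line, and the combination $E(x,u) = D_0 + u^{-1} D_1 + u^{-2} D_2$ collapses to $E = D_0 + D_1 + D_2$.

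A parallel and more combinatorial derivation proceeds by revisiting the case analyses of Lemma \ref{lem:networks-triangl} and discarding the configurations that would create a non-root triangle. For instance, in the loop case the basic contribution $\tfrac12 x^2 (I + E - L)$ of Lemma \ref{lem:networks} is corrected by subtracting the two triangle-creating configurations with generating functions $\tfrac12 x^4 (E-L)$ and $\tfrac12 x^2 L^2$; the third configuration of Lemma \ref{lem:networks-triangl}, whose contribution $\tfrac12 x^4 L$ came with an extra non-root triangle, is automatically absent under the specialization. The series, parallel, and $K_4$-core cases are handled analogously and agree line by line with the specialization above.

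The main step is the treatment of the 3-connected core appearing in $H_0$ and $H_1$. I would take the formulas of Lemma \ref{lem:eq-H0H1-triangl} and evaluate the bivariate series $t$ at the substitution point $\bigl(x^2(1+E)^3,\, 1+(u-1)/(1+E)^3\bigr)$, then set $u=0$; the second coordinate then becomes $1 - 1/(1+E)^3$, and a routine simplification of the remaining factors produces the stated formulas for $H_0$ and $H_1$. Combinatorially, each triangle of the 3-connected core contributes a factor $(1+E)^3 + u - 1$ which under $u=0$ becomes $(1+E)^3 - 1$: this counts exactly those substitutions that destroy the triangle by replacing at least one of its edges with a nontrivial network, while the single root triangle of an $H_1$-network is preserved through the index.

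Finally, as was done for the systems \eqref{eq:networks} and \eqref{eq:networks_multi}, the right-hand sides of \eqref{eq:networks-free} can be rearranged into polynomials with non-negative coefficients in the unknowns $E, L, I, S_i, P_i, D_i, W_i, H_i$; the standard implicit-function argument of \cite{drmota} then guarantees a unique formal power series solution, which is necessarily the combinatorial one. The main potential obstacle is verifying that the operation of forbidding non-root triangles commutes with the network substitution in the 3-connected core, but this is already encoded by the bookkeeping of Lemma \ref{lem:eq-H0H1-triangl} and reduces to the $u=0$ substitution carried out above.
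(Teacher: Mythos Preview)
Your proposal is correct and follows essentially the same approach as the paper: divide the bivariate equations of Lemmas \ref{lem:networks-triangl}--\ref{lem:eq-H0H1-triangl} by $u^i$ (for the classes indexed by $i$) and set $u=0$, with the $H_0,H_1$ formulas arising from the substitution $u=0$ in $t\bigl(x^2(1+E)^3,\,1+(u-1)/(1+E)^3\bigr)$. Your additional combinatorial reading and the remark on uniqueness of the non-negative solution are consistent with, and slightly more detailed than, the paper's brief argument.
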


\begin{proof}
The equations are obtained from the ones in Lemmas \ref{lem:networks-triangl} and \ref{lem:eq-H0H1-triangl} as follows.
For those defining generating functions with an index $i$ (counting networks whose root edge is incident with $i$ triangles) we take the corresponding equation, divide it by $u^i$ and then set $u=0$.
For the two respectively defining the functions $I$ and $L$, because they only count networks without any triangle, we simply set $u=0$.
In particular, with this convention, equation for $D_0$, $D_1$, $D_2$, $P_0$ and $I$ are exactly the same as in Lemma \ref{lem:networks-triangl}.
Equation for $L$ is obtained from Equation for $L$ in Lemma \ref{lem:networks-triangl} by simply writing $u=0$.

 The remaining  equations are  obtained by adapting the argument in Lemmas \ref{lem:networks-triangl} and \ref{lem:eq-H0H1-triangl} in this setting. For instance, the equation for $S_0$ follows since $S_1$ has been replaced with $u^{-1}S_1$ (namely, the generating function $u^{-1}S_1$ in Lemma \ref{lem:networks-triangl} in the equations we have now is written as $S_1$).
\end{proof}

\paragraph{Proof of Theorem \ref{thm:count-triangle-free}.}
The proof uses a simple variant of Lemma \ref{lem:technical} as follows. In the present situation we have an equation of the form
$$
	F(x,E(x)) = f(x,E(x)) + t\left(x^2(1+E)^3,1-\frac{1}{(1+E)^3}\right) = 0.
$$
The singularities of $t(z,u)$ are given by Equation \eqref{eq:triangle_sing_curve}, that is, $z(1+(1-u)z)^2= \tau$.
Assuming that the dominant singularities of $E(x)$  come from those of $t(z,u)$, they are obtained by solving
\begin{equation}\label{eq:first-rho-free}
 F(x,E) =0, \qquad    x^2(1+E)^3 \left(1-x^2\right)^2 = \tau.
\end{equation}
The conditions to be verified are the same as in Lemma \ref{lem:technical}, except that the equations determining the singularity are now \eqref{eq:first-rho-free} instead of $F = x^2(1+E)^3=0$.
They have as  smallest positive solution
$$
    \zeta\approx 0.378537, \qquad E_0=E(\zeta) \approx 0.000951.
$$
We verify the three technical conditions from Lemma \ref{lem:technical} and obtain the corresponding estimate on $[x^n]E(x)$ by computing the singular expansion:
\begin{align*}
	& E(x) = E_0 + E_1 X + E_2 X^2 + E_3 X^3 + O(X^4), &
	& \text{where } X = \sqrt{1-x/\zeta}, &
	& \text{and } E_3 \approx 0.094744.
\end{align*}

Let now $F(x)$ be the generating function of connected triangle-free cubic planar graphs.
We have the relation
$$
	3F^\bullet(x) = D_0 +I - L - L^2 - x^2(E-L),
$$
where $F^{\bullet}(x)$ counts connected triangle-free cubic planar graphs rooted at a vertex.
The reason is that the only networks that contribute to $F^\bullet(x)$ are those with no triangle incident with the root edge, that is, those counted by $D_0$ and $I$.
We have to subtract those which are not simple, namely: loop networks ($L$), series composition of two loop networks ($L^2$), and parallel composition of a double edge and a non-loop network ($x^2(E-L))$.
From the expansion of $E(x)$ at $\zeta$ we get a corresponding expansion of $F^\bullet(x)$ with $F^\bullet_3 \approx 0.001077$.
Since $E(x)$ is $\Delta$-analytic at $\pm\zeta$, so is  $E(x)$. Thus we obtain the following estimate for $n$ even:
$$
n f_n \sim n! [x^n] F(x) \sim \frac{2F^\bullet_3}{\Gamma(-3/2)}\cdot n^{-5/2}\zeta^{-n}  n!,
$$
which gives the estimate for $f_n$ claimed in Theorem \ref{thm:count-triangle-free}.

Finally, the exponential formula gives  $\sum_{n\ge0}t_n \frac{x^n}{n!} = e^{F(x)}$, from which can we compute the number of arbitrary triangle-free graphs up to any number of vertices.
We find an approximation to  the limiting probability $p$ of a triangle-free graph being connected by computing the quotients $f_n/t_n$, and obtain $p \approx 0.99995$. Finally we compute $\alpha = p \cdot f \approx  0.0009109$.
\qed

\section{Proofs of  limit law results: cherries and bricks}
The strategy in the proofs of Theorems \ref{thm:cherries} and \ref{thm:bricks} is very similar than the one of Theorem \ref{thm:triangles}.
We use the variable $u$ to mark the number of copies of a given cherry or brick, and we find the equations satisfied by the respective bivariate generating functions $D(x,u)$ of non-isthmus networks.
We then find the dominant singularities of $D(x,u)$ and apply Lemma \ref{th:quasi-powers} to deduce an asymptotic normal law, together with a computation of the first two moments.

\paragraph{Proof of Theorem~\ref{thm:cherries}.} %We use letter $c$ to encode cherries.
Let  $H$ be a fixed cherry with $h$ vertices, and let $\aut(H)$ be the number of automorphisms of $H$.
Notice that the unique vertex of degree 1 in a cherry must be fixed by every automorphism.
We let variable $x$ mark vertices and $u$ mark copies of $H$ in a cubic network.

Clearly, copies of a cherry can only arise in loop networks. % from $\mathcal{L}$-networks.
This implies that in order to obtain an equation for $D(x,u)$, we need only modify the corresponding equation for $L$ in Lemma \ref{lem:networks}, as follows
$$
    L = \frac{x^2}{2}(D+I-L)+\frac{x^h}{\aut(H)}(u-1).
$$
This is because occurrences of $H$ are encoded with the monomial $\frac{h!}{\aut(H)}\frac{x^h}{h!} = \frac{x^h}{\aut(H)}$, since $h!/\aut(H)$ is the number of ways of labeling $H$, and each occurrence is marked by $u$.

We solve for $L$ and, as in the  proof of Theorem \ref{thm:count-connected} of Section \ref{subsec:simple-connected}, we obtain an equation for $D = D(x,u)$, namely
\begin{equation}\label{eq:cherries}
	(1+D)\sqrt{\frac{x^4}{4} + 1 - x^2(D-1)-\frac{2x^h}{\aut(H)}(u-1)} - \frac{T(x^2(1+D)^3)}{2} - 1 = 0.
\end{equation}
By continuity, for $u$ close to 1, the dominant singularities of $D$ come from those of $T$ and we have
$$
	x^2(1+D)^3 = \tau =\frac{27}{256}.
$$
Since $T(\tau)=1/8$, we get a second equation from \eqref{eq:cherries} at the singular point $(x,u)$.
Eliminating $D$ we obtain a polynomial equation $P(x,u)=0$ for the dominant singularities $x=\rho(u)$ (since all functions are even in $x$, $-\rho(u)$ is also a singularity as in the univariate case).
Similarly to Theorem \ref{thm:triangles}, one shows that $D$ is analytic in a $\Delta$-domain at $\rho(u)$.
A routine computation gives the values of the moments as claimed.

\medskip

It remains to show that $\lambda >0$.
Taking the approximation $\rho \approx 0.32$ and setting $a=\aut(H)$, we obtain
$$
    a^2\lambda =  1.68 a\rho^h + (3.05-5.64h)\rho^{2h} \ge \rho^h(1.68 -5.64h\rho^{h}).
$$
It is elementary to check that the right-hand side is positive for $h \ge 2$.
\qed

\paragraph{Proof of Theorem~\ref{thm:bricks}.}
Let $B$ be a brick and $\aut(B)$ be the number of automorphisms of $B$ with the convention that the two vertices of degree 2 are distinguishable.
A brick $B \ne K_4^-$ can only arise from an $h$-network isomorphic to $B$ in which no edge is replaced.
The modified generating functions of networks becomes
$$
	H = \frac{M(x,1+D)}{1+D} + \frac{x^b}{\aut(B)}(u-1).
$$
Notice that when $B=K_4^-$, the brick $B$ can also appear as a series composition of two loop networks, and we have to modify $P$ in consequence:
$$
	P = x^2D + \frac{x^2}{2} D^2 + \frac{x^2}{2}L^2(u-1).
$$
The singularities are determined as before by modifying the equation satisfied by $D$.
Again, an application of Lemma \ref{th:quasi-powers} and a routine computation give the claimed result.

To finally show that $\lambda >0$, we approximate $\rho\approx 0.32$ as in the previous proof and obtain
$$
    b^2 \lambda = 1.74b \rho^b - (3.18 + 6.08h)\rho^{2b}.
$$
It is also elementary to check that the right-hand side is positive for $b \ge2$.
  \qed

% \newpage

\subsection*{Numerical table}

The numbers of arbitrary, connected and 2-connected cubic planar graphs for small values of $n$ were given in Table 1 from \cite{cubic}. To these we add the  new families of cubic planar graphs we have enumerated in this work: multigraphs and triangle-free.

$$
\begin{array}{|r|r|r|}
\hline
n & \hbox{Multigraphs}  & \hbox{Triangle-free} \\
\hline
 2&  							  2  & 0 \\
  4&                             47& 0 \\
  6&                            4710& 0 \\
  8&                          1239875& 840 \\
  10&                         669496590 & 181440\\
 12&                         634267800705 & 79833600\\
  14 &                      946240741175730 & 61232371200\\
 16 &                     2056603172557758825 & 69529227768000\\
  18 &                    6148723823146399745250 & 105801448533580800\\
  20 &                  24214871726535475276466175 & 205703216403561676800\\
   22 &               121481234613567346345100623350 & 497546215788236719104000\\
  24 &              756128791200007319214204305696475 & 1467149457794547540581568000\\
   26 &           5716103221856552423681553448136208750 & 5185994162876896958824435200000\\
    28 &       51574528549599744692080383726773969240625 & 21659841687523558647126605967360000\\
  30&       547355299046868963962856204715812138973841250 & 105576624440793627977398205974671360000\\
\hline
\end{array}
$$

\subsection*{Acknowledgments}
This research was started while the first author was visiting the Department of Mathematics at the Freie Universit\"at Berlin, and continued in a second visit. He is grateful for the support and hospitality received during his stays at the research group of Tibor Szab\'o at FU. We also thank Marc Mezzaroba for useful discussions on some computational aspects of our work. Finally, we thank the anonymous referees for providing many useful suggestions and comments.

%\newpage

\bibliography{biblio}
\bibliographystyle{abbrv}

\end{document}